\definecolor{darkgreen}{RGB}{0,128,64}
\definecolor{deepbrown}{RGB}{139,69,19}
\DeclareMathOperator*{\argmin}{arg\,min}
\newcommand{\R}{\mathbb{R}}
\newcommand{\colw}{0.135\textwidth}
\newcommand{\setfigparams}[3]{%
  \renewcommand{\colw}{#1\textwidth}%
  \setlength{\tabcolsep}{#3}%
}
\newcommand{\presetThree}{\setfigparams{0.20}{0.9}{0.4pt}}
\newcommand{\presetFive}{\setfigparams{0.19}{0.99}{0.4pt}}
\newcommand{\presetSix}{\setfigparams{0.165}{0.99}{0.4pt}}
\newcommand{\presetSeven}{\setfigparams{0.12}{0.99}{0.4pt}}
\newcommand{\localset}[3]{%
  \renewcommand{\colw}{#1\textwidth}%
  \setlength{\tabcolsep}{#3}%
}
\newcommand{\cellimg}[2]{
  \begin{tikzpicture}[baseline=(current bounding box.center)]
    \node[inner sep=0.0] (img) {\includegraphics[width=\linewidth]{#1}};
    \if\relax\detokenize{#2}\relax\else
      \node[anchor=south west, xshift=4pt, yshift=4pt, inner sep=1pt,
            fill=white, font=\scriptsize] at (img.south west) {#2};
    \fi
  \end{tikzpicture}%
}
\newcommand{\cellimgTR}[2]{%
  \begin{tikzpicture}[baseline=(current bounding box.center)]
    \node[inner sep=0.0] (img) {\includegraphics[width=\linewidth]{#1}};
    \if\relax\detokenize{#2}\relax\else
      \node[
        anchor=north west,
        xshift=2pt, yshift=-2pt,     
        inner sep=0.8pt, outer sep=0pt,
        fill=white,
        font=\fontsize{6.6}{7.4}\selectfont  
      ] at (img.north west) {#2};
    \fi
  \end{tikzpicture}%
}
\definecolor{emerald}{RGB}{58,191,153}
\setlist[enumerate]{leftmargin=.5in}
\setlist[itemize]{leftmargin=.5in}
\crefname{hypothesis}{Hypothesis}{Hypotheses}
\crefname{fact}{Fact}{Facts}
\title{Guided Variational Network for Image Decomposition
\thanks{\funding{The work of S. Morigi was funded by PNRR CN-HPC, under the NextGenerationEU program, CUP J33C22001170001. The work of A. Lanza and S. Morigi was supported in part by the INdAM--GNCS 2026 projects and by the PRIN2022 project ``Inverse Problems in the Imaging Sciences (IPIS)'' (2022 ANC8HL, CUP J53D23003670006) and the PRIN2022\_PNRR project CUP J53D23014080001. The work of Y. Wen was supported in part by the NNSF of China grant 12361089. The work of L. Yang was supported in part by the Postgraduate Scientific Research Innovation Project of Hunan Province.}}
}
\author{
Alessandro Lanza\thanks{Department of Mathematics, University of Bologna, 40127 Bologna, Italy
(\email{alessandro.lanza2@unibo.it}, \email{serena.morigi@unibo.it}).}
\and
Serena Morigi\footnotemark[2]
\and
Youwei Wen\thanks{School of Mathematics and Statistics, Hunan Normal University, Changsha, Hunan, China
(\email{wenyouwei@gmail.com}).}
\and
Li Yang\thanks{School of Mathematics and Statistics, Hunan Normal University, Changsha, Hunan, China; and Department of Mathematics, University of Bologna, 40127 Bologna, Italy (\email{liyang161029@gmail.com}). Corresponding author.}
}
\providecommand{\diag}{\operatorname{diag}}
\begin{document}

\maketitle

\begin{abstract}
Cartoon-texture image decomposition is a critical preprocessing problem bottlenecked by the numerical intractability of classical variational or optimization models and the tedious manual tuning of global regularization parameters.
We propose a Guided Variational Decomposition (GVD) model 
which introduces spatially adaptive quadratic norms whose pixel-wise weights are 
learned either through local probabilistic statistics or via a lightweight neural network 
within a bilevel framework.
This leads to a unified, interpretable, and 
computationally efficient model that bridges classical variational ideas with 
modern adaptive and data-driven methodologies.
Numerical experiments on this framework, which inherently includes automatic parameter selection, delivers GVD as a robust, self-tuning, and superior solution for reliable image decomposition.
\end{abstract}

\begin{keywords}
cartoon–texture decomposition, spatially adaptive regularization, variational network, fixed-point convergence
\end{keywords}

\begin{MSCcodes}
  68U10, 65K10, 47H10, 68T07
\end{MSCcodes}

\section{Introduction}
Image decomposition, in particular the separation of an image into cartoon and texture components, has long been a fundamental problem in image processing and computer vision. The cartoon part, characterized by piecewise smooth structures, provides the geometric backbone of the image, while the texture part captures oscillatory details and fine-scale patterns. A reliable cartoon--texture decomposition not only enhances visual understanding but also serves as a crucial preprocessing step in tasks such as image denoising \cite{LeVese}, compression \cite{Wakin2002DCC}, recognition \cite{YinGoldfarbOsher2005TVL1}, and medical imaging \cite{KohanBehnam2011MedDenoise}.

The {\em cartoon $+$ texture decomposition problem} considered here for two dimensional images aims to split a $h \times w$ vectorized image $f \in \R^{n}$ - with $n = h \times w$ - into two components:
\begin{equation*}
	f \,\;{=}\;\, c + t \,,
\end{equation*}
where $c$ represents the cartoon component containing homogeneous or smoothly varying regions, and $t$ captures texture-like oscillatory structures.
Given the desired properties of $c$ and $t$, a variational decomposition model for a given image $f$ can be formulated as:
\begin{equation}
\{\widehat{c},\widehat{t}\} \in \arg \min_{c,t \in \R^n}
\left\{
 \| c \|_{\star} + \lambda \| t \|_{\small{\square}}
\right\}
\quad \text{subject to} \quad c+t=f,
\label{eq:mod_constrained}
\end{equation}
where $\lambda > 0$ is a regularization parameter, and $\|\cdot\|_{\star}$ and $\|\cdot\|_{\square}$ denote suitable norms (or semi-norms) that encode the structural priors of the cartoon and texture components, respectively. 
Naturally, the hard constraint in \eqref{eq:mod_constrained} is replaced by a quadratic penalty, leading to an unconstrained formulation
\begin{equation}
\left\{\widehat{c},\widehat{t}\,\right\} \in \arg \min_{c,t \in \R^n}
\left\{
\frac{1}{2}\| f-(c+t) \|_2^2
+ \lambda_1 \| c \|_{\star}
+ \lambda_2 \| t \|_{\small{\square}}
\right\},
\label{eq:mod_unconstrained}
\end{equation}
which we adopt in this paper. Here $\lambda_1, \lambda_2>0$ and $\|\cdot\|_2$ denotes the Euclidean norm.

Classical variational models in the form \eqref{eq:mod_constrained} or \eqref{eq:mod_unconstrained}, such as Rudin--Osher--Fatemi (ROF)-type approaches and their extensions, have established the theoretical and computational foundation of cartoon--texture decomposition. The Total Variation (TV) is a natural regularizer for modeling cartoon images \cite{RUDIN1992259}. For zero-mean oscillatory components, Meyer \cite{Meyer} introduced the $\mathrm{G}$-space which is more suitable than the $\mathrm{L}_2$-norm for modeling textures \cite{AABC}. Others proposed negative Sobolev norms as numerically treatable approximations of the $\mathrm{G}$-norm \cite{LeVese,LeVese1,SIAMHKLM}.
A widely used instantiation of \eqref{eq:mod_constrained} is obtained by selecting $\|\cdot\|_{\star}=\|\cdot\|_{\mathrm{TV}}$ for the cartoon component and $\|\cdot\|_{\square}=\|\cdot\|_{\mathrm{G}}$ for the texture component. Concretely, the TV semi-norm is defined as
\begin{equation}
\| c\|_{\mathrm{TV}} = \sum_{i=1}^n \left\| \left(\nabla c\right)_i\right\|_2,
\label{eq:TVnorm}
\end{equation}
where $\nabla$ denotes the discrete gradient operator with $(\nabla c)_i \in \R^2$ the discrete gradient at the pixel $i$, and where the Meyer's $\mathrm{G}$-norm is defined by
\begin{equation}
\| t \|_{\mathrm{G}} = \min_{\xi \in \R^{2n}}
\left\{ \,
\max_{1\leq i \leq n} \left\| \xi_i \right\|_2 : \;\,
t = \mathrm{div}(\xi) = \nabla_x^* \xi_1 + \nabla_y^* \xi_2 \, 
\right\} \, ,
\label{eq:Gnorm}
\end{equation}
where $\mathrm{div} = - \nabla^*$ denotes the discrete divergence operator, defined as the negative adjoint of the gradient operator.

Intrinsic difficulties with those variational models come from the numerical intractability of the considered norms, the tedious and time consuming parameter tuning process, and computational challenges in minimization with non-convex regularization terms. 
In particular, parameter tuning, $\lambda$ in \eqref{eq:mod_constrained} and $\lambda_1$, $\lambda_2$ in \eqref{eq:mod_unconstrained}, highly influences the quality of the obtained decomposition. Most of 
the existing strategies to select model parameters are based on trial-and-error  approaches. Bilevel 
frameworks to automatically select the free model parameters are proposed in \cite{LMS21_IPIP,COAP_2024}, exploiting the noise whiteness property.

In this work, we propose a novel framework, termed \emph{Guided Variational Decomposition}, which introduces weighted energy norms into a simple quadratic variational model, under an automatic parameter selection strategy. The key idea is to preserve the computational efficiency of quadratic formulations while enriching their expressive power through weighted energy norms with spatially-adaptive, pixel-wise weights.
This allows the model to accommodate highly diverse and spatially varying structures in natural images, where smooth background regions and fine oscillatory textures demand different regularization strengths that a global weight cannot capture effectively.

The weight matrices $W$ defining the spatially adaptive energy norms are computed in two different ways: (i) a purely model-based probabilistic method, and
(ii) a data-driven approach based on a convolutional neural network.
These weight matrices are progressively refined across iterations, using feedback from the most recent estimates of the cartoon and texture components. This iterative guidance provides evolving structural cues and enabling flexibly adaptation to heterogeneous image regions while retaining the efficiency of quadratic inner solvers.

The main contributions of this paper are:
\begin{itemize}
    \item We introduce a \emph{Guided Variational Decomposition} (GVD) model: a quadratic variational model for cartoon--texture separation that employs spatially adaptive, pixel-wise weights to reconcile the efficiency of quadratic formulations with the expressive power required for heterogeneous natural images.
    \item We propose two instantiations of the spatially adaptive weight maps: a data-driven supervised variant which couples a scalar multilayer perceptron (MLP)  - for global regularization parameters - and a lightweight U-Net - for pixel-wise weights;
    and a model-based probabilistic estimator that derives weights from local neighborhood statistics and requires no training data.
    \item 
    We develop an end-to-end trainable variational network framework - \emph{Neural Guided Variational Decomposition} (NGVD) - which implements a bilevel learning optimization scheme, and a probabilistic variational framework - \emph{Probabilistic Guided Variational Decomposition} (PGVD) - that, iteratively, alternates between constructing spatially adaptive weight maps and solving the resulting fixed-weight quadratic subproblem. This design preserves numerical stability while enabling progressively refined structural guidance.
    \item We provide a theoretical analysis that places our iteratively guided scheme in a fixed-point framework. Concretely, we prove (i) uniqueness and conditioning of each fixed-weight inner solve, (ii) existence of outer fixed points, (iii) a sufficient, verifiable contractivity condition with explicit constants that ensures linear convergence to a unique fixed point, and (iv) Lipschitz stability bounds with respect to measurement perturbations. 
    \item We perform extensive numerical experiments on synthetic and real images, including ablation studies and comparisons with classical and recent state-of-the-art methods, demonstrating that the proposed framework yields improved decomposition quality, better edge preservation, and practical robustness.
\end{itemize}


The remainder of the paper is organized as follows. Section~\ref{sec:related} reviews some related works. In Section~\ref{modelsect} we introduce and analyze the proposed quadratic variational decomposition model with spatially adaptive weights and discuss its numerical solutions. 
In Section \ref{sec:PGVD} we ground the model via probabilistic arguments and then introduce the Probabilistic Guided Variational Decomposition (PGVD) for parameter selection.
In Section~\ref{sec:NGVD} we provide details on the bilevel optimization approach developed by a Neural Guided Variational Decomposition (NGVD) framework. 
Section~\ref{sec:theory} states the main theoretical results described above; complete proofs and auxiliary lemmas are collected in Appendix~\ref{app:proofs}. Section~\ref{sec:experiments} presents our experimental evaluation, including details of implementation, ablation studies, and comparisons on synthetic and real datasets. Finally, Section~\ref{sec:con} draws conclusions and discusses limitations and directions for future work.

\section{Related Work}\label{sec:related}

Early cartoon--texture decomposition models relied on global regularization parameters within variational formulations. The seminal ROF model \cite{RUDIN1992259} introduced total variation (TV) as an effective prior for cartoon-like structures, while Meyer’s $G$-space \cite{Meyer} provided a dedicated functional setting for oscillatory textures. Subsequent works proposed practical and numerically tractable approximations of the $G$-norm using negative Sobolev metrics \cite{LeVese,LeVese1}, enabling texture extraction through convex or quasi-convex optimization frameworks. In addition, efficient solvers for the original Meyer model have also been studied, e.g., via primal--dual schemes \cite{wen2019primal}. Although these classical approaches form the basis of modern decomposition models, the use of global regularization parameters limits their ability to accommodate the spatial heterogeneity of natural images.

To overcome the shortcomings of global weighting, a wide range of locally adaptive regularizers have been proposed. Spatially varying TV formulations \cite{Chambolle2004,Chen2010TVadaptive} adapt the amount of smoothing according to local geometry or contrast. Patch-based and nonlocal techniques \cite{Buades2005,Gu2014LR} leverage patch recurrence, self-similarity, or low-rank statistics to better separate textures from piecewise-smooth structures. Weighted least-squares approaches \cite{Farbman2008,Wu2010} further incorporate edge-aware metrics to enhance locality and spatial adaptivity. Recently, a method for image decomposition combining a weighted least-squares data term with low-rank regularization was studied in \cite{li2025cartoon}. While these methods greatly enhance flexibility, they often rely on handcrafted descriptors and do not provide pixel-wise regularization weights that can be learned and updated within an automatic pipeline.

Automatic parameter selection has been investigated through bilevel optimization, which provides a rigorous framework for learning optimal regularization parameters from data. Foundational works \cite{KunischPockBilevel} established differentiation through variational models, while their applications to imaging tasks demonstrated the feasibility of learning global regularization strengths \cite{Calatroni2019}. For cartoon--texture decomposition, an adaptive parameter rule exploiting noise whiteness was proposed in \cite{girometti2023ternary}. Nevertheless, most existing bilevel strategies focus on learning a small set of global parameters, and thus remain limited in their ability to capture strong local variability between edges and textures.

More recently, data-driven approaches have introduced implicit forms of spatial adaptivity. Plug-and-play priors \cite{Venkatakrishnan2013,Chan2017Plug} embed CNN-based denoisers within iterative schemes and have been applied to structure--texture modeling \cite{doi:10.1137/24M1677770}, while deep-unfolding architectures such as the Low Patch Rank decomposition network (LPR-Net) \cite{10.1007/978-3-031-92366-1_30} learn local structures by unrolling classical optimization steps. Although powerful, these approaches often do not yield a simple explicit energy with directly interpretable pixel-wise regularization weights, which makes it less straightforward to control or analyze the spatial regularization mechanism.

Existing unrolling and plug-and-play methods typically approximate or replace discrete algorithmic steps, such as proximal operators or denoising sub-problems, with neural networks. 
While computationally powerful, these approaches somewhat sever the explicit connection to an underlying energy functional, rendering rigorous convergence analysis difficult to establish. The proposed NGVD embeds data-driven components (specifically, spatially adaptive weights) directly into the regularization term. This design ensures that the inner solver remains exact (linear system solver). By preserving a well-defined variational structure at every iteration, NGVD facilitates the formal fixed-point convergence framework detailed in Section 5 and verified numerically in Figure 7.

The proposed GVD model bridges classical variational principles and data-driven methods by introducing spatially adaptive quadratic norms. Using a bilevel optimization framework, pixel-wise weights are learned via local statistics or a lightweight CNN.
This automatic, parameter-free approach balances structural guidance with numerical reconstruction, ensuring stability and mutual improvement between weights and results. By maintaining an exact quadratic solver (unlike the approximations used in unrolling or PnP methods), the model remains interpretable, efficient, and mathematically rigorous regarding convergence.

\section{Spatially-adaptive quadratic GVD model}
\label{modelsect}
In this section, first we define and motivate the proposed spatially-adaptive variational model for decomposition as a fully quadratic variant of the unconstrained Meyer's model, then we analyze the model in terms of existence and uniqueness of solution and, finally, we introduce the numerical algorithm adopted \mbox{for its solution.}

The unconstrained Meyer’s model for cartoon/texture decomposition is defined by

\begin{equation}
\{\widehat{c},\widehat{t}\} \in \arg \min_{c,t \in \R^n}
\left\{ \,
\frac{1}{2}\| f-(c+t) \|_2^2
+ \lambda_1 \| c \|_{\mathrm{TV}}
+ \lambda_2 \| t \|_G
\right\},
\label{eq:Meyer_model}
\end{equation}
with the TV semi-norm $\| c \|_{\mathrm{TV}}$ and the G-norm $\| t \|_G$ defined in \eqref{eq:TVnorm} and \eqref{eq:Gnorm}, respectively.

We propose a variant to model \eqref{eq:Meyer_model} which configures as a fully quadratic variational framework. Specifically, we substitute the non-smooth TV semi-norm and $G$-norm with a squared weighted $H^1$ semi-norm and a squared weighted $H^{-1}$ norm, respectively, thereby transforming the original problem into a computationally efficient, strongly convex quadratic task. The proposed model reads
\begin{equation}
\{\widehat{c},\widehat{t}\} \in \arg \min_{c,t \in \R^n}
\left\{ \,
\frac{1}{2}\| f-(c+t) \|_2^2
+ \frac{\lambda_1}{2} \| c \|^2_{H^1(W_1)}
+ \frac{\lambda_2}{2} \| t \|^2_{H^{-1}(W_2)}
\right\},
\label{eq:our0}
\end{equation}
with the squared weighted semi-norm for $c$ and squared weighted norm for $t$ defined by
\begin{equation}
\| c \|^2_{{H^1}(W_1)} 
\,\;{=}\;\, 
\| \nabla c \|_{W_1}^2 \, ,
\label{eq:ourRc}
\end{equation}
\vspace{-0.4cm}
\begin{equation}
\| t \|^2_{H^{-1}(W_2)} 
\,\;{=}\;\,  
\min_{\xi \in \R^{2n}} \left\{ \, 
\| \xi \|^2_{W_2}  : \;\,
t = \mathrm{div}(\xi) = - \left(\nabla_x^* \xi_x + \nabla_y^* \xi_y \right) \, 
\right\} \, ,
\label{eq:ourRt}
\end{equation}
respectively, and where we assume that the two weight matrices $W_1, W_2 \in \mathbb{R}^{2n \times 2n}$ are diagonal and positive definite, in particular
\begin{equation}
W_i \,\;{=}\:\! 
\left[\!\!
\begin{array}{cc}	
W_{i,x} & \!\!\! 0 \\
0       & \!\!\! W_{i,y}
\end{array}
\!\!\right] \! ,
\;\;\:\text{with}\;\;
\left\{ \!\!
\begin{array}{lcl}
W_{i,x} &\!\!\!{=}\!\!\!& \mathrm{diag}\left(\omega_{i,x}\right) 
\\
W_{i,y} &\!\!\!{=}\!\!\!& \mathrm{diag}\left(\omega_{i,y}\right) 
\end{array}
\right. \!\!\! , \;\;\,
\omega_{i,x}, \omega_{i,y} \in \R_{++}^n ,  
\;\; i = 1,2 \, .
\label{eq:Ws}
\end{equation}
Note that the squared $W$-norms in (\ref{eq:ourRc})-(\ref{eq:ourRt}) are defined, for vectors $z = (z_x^\top, z_y^\top)^\top {\in}\, \mathbb{R}^{2n}$, as 
\begin{equation}
\|z\|_W^2 := z^\top W z = z_x^\top W_x z_x + z_y^\top W_y z_y \, .
\end{equation}

Notably, in close analogy to the TV semi-norm and the $G$-norm in Meyer's model \eqref{eq:Meyer_model}, the $H^1(W_1)$ and $H^{-1}(W_2)$ norms follow the structure of a dual pair, and their squared forms in \eqref{eq:ourRc} and \eqref{eq:ourRt} are Fenchel conjugates in the sense of weighted Hilbert spaces. This framework allows, independently of the specific choice of the weights, to effectively capture the antagonistic relationship between structural gradients and oscillatory patterns. 
While a strict duality is recovered for $W_2 = W_1^{-1}$, here we treat $W_1$ and $W_2$ as independent design parameters to provide greater flexibility in local adaptation. In particular, the positive weights in \eqref{eq:ourRc} and \eqref{eq:ourRt} allow for a spatially-adaptive, pixel-wise penalization of the cartoon gradients and the texture's underlying oscillatory components, respectively.

For the cartoon component $c$, the weighted quadratic regularizer in \eqref{eq:ourRc} extends beyond simple Tikhonov smoothing. In fact, as established in the Iteratively Reweighted Least Squares (IRLS) framework \cite{daubechies2010, chartrand2008}, an appropriate weight configuration enables the regularizer to promote derivative sparsity with an efficacy comparable to non-smooth and even non-convex functionals, such as truncated penalties that bridge unbiased sparse recovery and differentiable optimization \cite{Yang2026TruncatedHuber}. Specifically, by setting the weights to be inversely proportional to the derivative magnitudes, the weighted $H^1$ semi-norm can effectively emulate $L_p$ (with $p \leq 1$) sparsity-promoting behaviors, thereby enabling a more effective enforcement of gradient sparsity than the standard TV approach, which is limited by its convex $L_1$ nature.

Parallel arguments justify the adoption of the squared weighted $H^{-1}$ norm for the texture component $t$. The use of $H^{-1}$ spaces to model oscillatory signals was pioneered by Vese and Osher \cite{LeVese1} and Aujol et al. \cite{aujol2005}, who established this norm as a robust approximation of Meyer’s $G$-norm. 
Our framework, instead, focuses on a squared and weighted $H^{-1}$ formulation. This choice, rooted in the IRLS theory \cite{daubechies2010}, allows the quadratic penalty $\|\xi\|_{W_2}^2$ to act not only as a highly flexible approximation for $L_p$ minimization, including $L_\infty$ inherent in Meyer’s $G$-norm, but also as a spatially-adaptive masking tool. By decoupling the weights from a rigid functional regime, they can serve as localized design parameters capable of encoding prior structural knowledge, such as edge orientation or texture density, directly into the variational framework.

In order to define the proposed model \eqref{eq:our0}--\eqref{eq:ourRt} as explicitly as possible, we first introduce the definitions of the discrete gradient operator $\nabla$ (acting on the vectorized cartoon component $c \in \mathbb{R}^n$) and the discrete divergence operator $\mathrm{div}$ (acting on the vectorized vector field $\xi = [\xi_x^\top, \xi_y^\top]^\top \in \mathbb{R}^{2n}$):
\begin{equation}
\nabla c =  
\left[ \!
\begin{array}{c}
\nabla_x c \\
\nabla_y c
\end{array}
\! \right] , 
\quad
\mathrm{div}(\xi) = -\nabla^\top \xi = - (\nabla_x^\top \xi_x + \nabla_y^\top \xi_y),
\label{eq:Ddiv}
\end{equation}
where $\nabla_x, \nabla_y \in \mathbb{R}^{n \times n}$ are matrices representing the unscaled forward finite difference operators, approximating the horizontal and vertical partial derivatives of an $h \times w$ image ($n=hw$), respectively. By assuming anti-reflective boundary conditions, these matrices are defined as:
\begin{equation}
\nabla_x = I_h \otimes B_w, \quad \nabla_y = B_h \otimes I_w, \quad
B_m = \begin{bmatrix} 
-1 & 1 & 0 & \dots & 0 \\
0 & -1 & 1 & \dots & 0 \\
\vdots & \vdots & \ddots & \ddots & \vdots \\
0 & 0 & \dots & -1 & 1 \\
0 & 0 & \dots & -1 & 1 
\end{bmatrix} \in \mathbb{R}^{m \times m} ,
\label{eq:Dxy}
\end{equation}   
where $I_m$ denotes the identity matrix of order $m$ and $\otimes$ the Kronecker product.
Then, by plugging \eqref{eq:ourRc} and \eqref{eq:ourRt} into \eqref{eq:our0} and replacing $t = -\nabla^\top \xi$ in the data fidelity term, we obtain the explicit, unconstrained version of the proposed quadratic decomposition model:
\begin{equation}
\{\widehat{c},\widehat{\xi}\} \in \arg \min_{c,\xi}
\left\{ \,
\frac{1}{2}\| c-\nabla^\top \xi - f \|_2^2
+ \frac{\lambda_1}{2} \| \nabla c \|^2_{W_1}
+ \frac{\lambda_2}{2} \| \xi \|^2_{W_2}
\right\}, 
\label{eq:our}
\end{equation}
where $\lambda_1, \lambda_2 \in \R_{++}$ are scalar regularization parameters. After solving \eqref{eq:our}, the texture component is then immediately reconstructed by
\begin{equation}
\widehat{t} \,\;{=}\;\, -\nabla^\top \widehat{\xi} \, .
\end{equation}

We would like to highlight how the separation of the scalar regularization parameters $\lambda_i$ from the $W_i$ matrices is a modeling choice, but well motivated. In fact, as it will be shown in Proposition \ref{prop:MAP}, this separation emerges naturally from the probabilistic MAP formulation of the decomposition inverse problem: $\lambda_i$ represent global noise-to-signal variance ratios and serve as global fidelity/regularization balancing coefficients, while $W_i$ encode local pixel-wise properties (covariances) of the sought components.

Since $W_1$ and $W_2$ are positive definite, the objective function is strictly convex and the decomposition is well-posed.
Accurate separation of smoothing and edge-preserving behavior via spatially varying weights is central to high-quality cartoon--texture decomposition but is also intrinsically challenging.   
The ideal weights $W_1$ and $W_2$ should promote a piecewise smooth component $c$ (cartoon) and a highly oscillatory component $t$ (texture), i.e., regions with strong edges are regularized differently from flat or textured regions, thereby enhancing the decomposition quality. The per-pixel adaptivity provides nontrivial flexibility: the model remains quadratic but adjusts to local image features.

To analyze the existence and uniqueness of the solution to the proposed variational model \eqref{eq:our}, we define the joint solution vector $x \in \mathbb{R}^{3n}$ and the block matrices $S \in \mathbb{R}^{n \times 3n}$, $G \in \mathbb{R}^{2n \times 3n}$, and $R \in \mathbb{R}^{2n \times 3n}$ as follows:
\begin{equation}
x = \left[\!
\begin{array}{c}
 c \\
 \xi
\end{array}
\!\right] , 
\;\;\;\:
S = \big[ \, I_n \: -\nabla^\top \, \big] ,
\;\;\;\:
G = \big[ \, \nabla \;\: 0_{2n{\times}2n} \, \big] ,
\;\;\;\:
R = \big[ \, 0_{2n\,{\times}\,n} \; I_{2n} \, \big] ,
\label{eq:defs}
\end{equation}
where $0_{h{\times}w}$ denotes the $h \times w$ zero matrix. 
Based on the definitions in \eqref{eq:defs}, we have that
$S x = c -\nabla^\top \xi$, $G x = \nabla c$, $R x = \xi$, hence model \eqref{eq:our} can be equivalently reformulated as
\begin{equation}
\label{General_obj}
\widehat{x } \,\;{=}\; \arg \min_{x \in \R^{3n}} 	
	\left\{ \, \mathcal{J}(x) \;{:=}\; \frac{1}{2}\|S x-f\|_2^2
	+\frac{\lambda_1}{2}\|Gx\|_{W_1}^2
	+\frac{\lambda_2}{2}\|Rx\|_{W_2}^2
    \right\} \, .
\end{equation}
The following result establishes the existence and uniqueness of the solution.

\begin{proposition}
\label{prop1}
For any $f \in \R^n$ and any pair of regularization parameters $\lambda_1,\lambda_2 \in \R_{++}$ and of weight matrices $W_1, W_2 \in \R_{++}^{2n \times 2n}$ defined as in \eqref{eq:Ws}, the quadratic cost function $\mathcal{J}$ in \eqref{General_obj} is strictly convex. 
Hence, $\mathcal{J}$ admits a unique global minimizer given by the solution of the symmetric positive definite linear system yielded by the first-order optimality conditions:
\begin{equation}
\label{eq:normalA_main}
\begin{array}{c}
\nabla_x \:\! \mathcal{J}(x) = 0_{3n} 
\:\;\;{\Longleftrightarrow}\;\:\:
A(W_1,W_2)\,x \;=\: S^\top \! f \, , 
\vspace{0.3cm}\\
\text{with}\;\;\; 
A(W_1,W_2)\:=\: S^\top S + \lambda_1 G^\top W_1 G + \lambda_2 R^\top W_2 R 
\;\, \in \R^{3n \times 3n}
\end{array}
\end{equation}
\end{proposition}

\begin{proof}
The first-order optimality conditions for the quadratic function $\mathcal{J}$ in \eqref{General_obj} read 
\begin{equation}
\nabla_x \:\! \mathcal{J}(x) = 0_{3n} 
\:\;\;{\Longleftrightarrow}\;\:\:
S^\top(Sx - f) + \lambda_1 G^\top W_1 G\, x 
+ \lambda_2 R^\top W_2 R\, x = 0_{3n} \, .
\nonumber
\end{equation}
Rearranging yields the normal equations
\[
  \bigl(\underbrace{S^\top S + \lambda_1 G^\top W_1 G 
  + \lambda_2 R^\top W_2 R}_{=:\,A(W_1,W_2)}\bigr)\, x 
  = S^\top f,
\]
which is precisely the linear system in ~\eqref{eq:normalA_main}.

The matrix $A(W_1, W_2)$ is clearly symmetric and, based on the definitions in \eqref{eq:defs}, admits the block representation
\[
A(W_1, W_2) =
\begin{bmatrix}
I_n + \lambda_1\nabla^\top W_1 \nabla & -\nabla^\top \\
- \nabla & \nabla \,\nabla^\top  + \lambda_2 W_2
\end{bmatrix}.
\]
Since $\lambda_1, \lambda_2 \in \R_{++}$ and $W_1, W_2$ are diagonal and positive definite, both the diagonal blocks are symmetric positive definite. 
The Schur complement of the top-left block is given by
$$\mathcal{S}_c = (\nabla \nabla^\top + \lambda_2 W_2) - \nabla \, (I_n + \lambda_1 \nabla^\top W_1 \nabla)^{-1} \nabla^\top.$$
\noindent Using the Woodbury identity, we can express the inverse as 
\[(I_n + \lambda_1 \nabla^\top W_1 \nabla)^{-1} = I_n - \lambda_1 \nabla^\top (W_1^{-1} + \lambda_1 \nabla \nabla^\top)^{-1} \nabla.
\]
Since $\,W_1^{-1} + \lambda_1 \nabla \nabla^\top$ is positive definite and, thus, its inverse is positive definite, we have
\[
\mathcal{S}_c =\lambda_2 W_2 + \nabla \bigl[ \lambda_1 \nabla^\top (W_1^{-1} + \lambda_1 \nabla \nabla^\top)^{-1} \nabla \bigr] \nabla^\top \succeq \lambda_2 W_2 \succ 0 \, .
\]
\noindent Thus, according to the Schur Complement condition for positive definiteness of block matrices, the entire matrix $A(W_1, W_2)$ is symmetric positive definite, hence invertible. 
Therefore, the quadratic function in \eqref{General_obj} is strictly convex and admits a unique global minimizer obtained as the solution of the linear system in \eqref{eq:normalA_main}.
\end{proof}

The linear system in \eqref{eq:normalA_main} that solves the proposed quadratic decomposition model \eqref{General_obj} is of large (if not huge) size, and the coefficient matrix is symmetric, positive definite and sparse. Hence, it is efficiently solved using the iterative conjugate gradient (CG) method. The iterations are terminated as soon as the residual norm falls below a prescribed tolerance or a maximum number of iterations is reached. Solving the full coupled system ensures global consistency between \( c \) and \( \xi \), and is typically more efficient than alternating minimization schemes, which may require more iterations and can suffer from slower convergence due to partial updates.

Given the large number of free parameters in the proposed weighted variational model \eqref{eq:our}, an effective parameter selection strategy is essential to ensure high-quality decomposition. In the following Sections \ref{sec:PGVD} and \ref{sec:NGVD} we illustrate a probabilistic and a data-driven approach, respectively.

\section{Probabilistic-Guided Variational Decomposition (PGVD) framework}
\label{sec:PGVD}

With the aim of introducing a first, model-based statistical approach to the parameter estimation, we adopt a probabilistic framework that interprets the proposed variational formulation \eqref{eq:our} as deductively deriving from a Maximum A Posteriori (MAP) estimate of the sought components under precise assumptions on their probability distribution.

For this purpose, we introduce the variable $r := f - (c+t) \in \R^n$, which represents the residual of the texture/cartoon decomposition: it can be regarded as an additive noise (or error) affecting the observed image $f$.

First, we assume that the three components $r$, $c$, $\xi$ are mutually independent,
\begin{equation}
p(r,c,\xi) \,\;{=}\;\, p(r) \:\! p(c) \:\! p(\xi)  
\;\;\;{\Longrightarrow}\;\;\; 
p(r,c,t) \,\;{=}\;\, p(r) \:\! p(c) \:\! p(t)  \, ,
\label{eq:H1}
\tag{H.1}
\end{equation}
where the mutual independence of $r$, $c$, $t$ follows from $t$ being a deterministic function of $\,\xi$ via $t = \mathrm{div}(\xi)$.
Assuming that the additive noise that corrupts an image is independent of the image itself (or, in our case, of the two constituent components $c$, $t$) is quite standard. On the other hand, assuming that the cartoon and texture components are mutually independent seems reasonable, however it should be considered a modeling hypothesis and simplification. In fact, in correspondence with some edges in the image (in particular, related to physical edges of the objects in the scene), it is plausible that some correlation may exist. However, this correlation is spatially localized and should not fundamentally invalidate the global independence assumption for the purpose of deriving the variational model.

Then, we assume that the noise $r$ is the realization from a $n$-variate Gaussian-distributed random vector with zero-mean and scalar covariance matrix (so the noise is independent and identically distributed, i.i.d), in formula
\begin{equation}
p(r \mid \sigma_r) \,\;{=}\;\, \mathcal{G}_n\left(r;0_n,\Sigma_r\right)\,, \;\; \text{with}\;\; \Sigma_r = \sigma^2_r \:\! \mathrm{I}_n \, ,
\label{eq:H2}
\tag{H.2}
\end{equation}
where $\mathcal{G}_n(r; 0_n,\Sigma_r)$ indicates the value of the $n$-variate Gaussian distribution with mean vector $0_n$ and covariance matrix $\Sigma_r \in \R^{n \times n}$ evaluated at $r \in \R^n$, and with $\sigma_r \in \R_{++}$ denoting the noise standard deviation.

For what regards the cartoon component $c$, we assume it is the realization of a Non-Stationary Gaussian Markov Random Field (NS-GMRF), which, for $c$ 
regarded as a matrix with the generic entry indicated by $c_{i,j}$, reads
\begin{equation}
p(c) = \frac{1}{\mathcal{Z}_c} \prod_{i,j} \exp \left( -\frac{1}{2} \left[ \frac{(c_{i+1,j} - c_{i,j})^2}{\sigma_{x,i,j}^2} + \frac{(c_{i,j+1} - c_{i,j})^2}{\sigma_{y,i,j}^2} \right] \right) \, ,
\label{eq:NS_GMRF1}
\end{equation}
where $Z_c$ is the so-called partition function that ensures the normalization of the distribution. 
By noting that the differences in \eqref{eq:NS_GMRF1} represent unscaled forward finite difference approximations of the horizontal and vertical partial derivatives of $c$, thus corresponding to the definition of discrete gradient introduced in \eqref{eq:Ddiv}--\eqref{eq:Dxy}, the distribution in \eqref{eq:NS_GMRF1} can be equivalently written in the following compact form for the vectorized cartoon component $c$: 
\begin{align}
p(c \mid \Sigma_c) 
&\;{=}\;
\frac{1}{\mathcal{Z}_c} 
\prod_{i=1}^n \exp\left( - \frac{1}{2} \:\! \left[ 
\frac{(\nabla_x c)_i^2}{ \sigma_{x,i}^2} +  
\frac{(\nabla_y c)_i^2}{ \sigma_{y,i}^2}
\right] \right)
\nonumber
\\
&\;{=}\; 
\frac{1}{\mathcal{Z}_c} 
\exp \left( - \frac{1}{2} \:\! \| \nabla c \|_{{\Sigma_c}^{\!\!-1}}^2 \right) \, , \;\;\text{with}\;\; \Sigma_c = \mathrm{diag}(\Sigma_{c,x},\Sigma_{c,y}) \, ,
\label{eq:H3}
\tag{H.3}
\end{align}

Finally, we assume that the component $\xi$ follows an $2n$-variate Gaussian distribution with zero-mean and diagonal covariance matrix, in formula
\begin{equation}
p(\xi \mid \Sigma_\xi) \,\;{=}\;\, \mathcal{G}_{2n}\left(0_{2n},\Sigma_\xi\right)\,, \;\; \text{with}\;\; \Sigma_\xi = \mathrm{diag}\left(\Sigma_{\xi,x},\Sigma_{\xi,y}\right) \, . 
\label{eq:H4_1}
\tag{H.4}
\end{equation}
Based on noting that $t = \mathrm{div}(\xi)$ is defined via a linear transformation of a Gaussian-distributed random vector, it is easy to prove that assumption \eqref{eq:H4_1} for $\xi$ implies that $t$ is also Gaussian-distributed with zero-mean but non-diagonal covariance matrix; in particular, we have
\begin{equation}
p(t \mid \Sigma_t) \,\;{=}\;\, \mathcal{G}_{n}\left(0_{n},\Sigma_t\right)\,, \;\;\text{with}\;\; \Sigma_t = \nabla^\top \Sigma_\xi \nabla. 
\label{eq:H4_2}
\end{equation}

The covariance matrix $\Sigma_t$ in \eqref{eq:H4_2} serves as a sophisticated differential operator that inherently penalizes low-frequency components while promoting oscillatory patterns.

In the following Proposition~\ref{prop:MAP} we demonstrate - the proof is deferred to Appendix~\ref{app:proofs} - that starting from the probabilistic assumptions above and applying the MAP estimation approach, one obtains the proposed variational decomposition model \eqref{eq:our}.

\begin{proposition}
\label{prop:MAP}
Under assumptions \ref{eq:H1}, \ref{eq:H2}, \ref{eq:H3}, \ref{eq:H4_1},  the MAP estimation of components $c$ and $\xi$ leads uniquely to the proposed variational decomposition model \eqref{eq:our}, with the regularization parameters $\lambda_1,\lambda_2 \in \R_{++}$ defined as
\begin{equation}
\lambda_1 = \frac{\sigma_r^2}{\underline{\sigma}_{\,c}^2} \, ,
\quad\;
\lambda_2 = \frac{\sigma_r^2}{\underline{\sigma}_{\,\xi}^2} \, ,
\quad\;
\underline{\sigma}_{\, c}^2 = \min_{i=1,\ldots,2n} \Sigma_{c,ii} \, , 
\quad
\underline{\sigma}_{\,\xi}^2 = \min_{i=1,\ldots,2n} \Sigma_{\xi,ii} \, ,
\label{eq:MAPpars}
\end{equation}
and with the weight matrices $W_1,W_2 \in \R^{2n \times 2n}$ given by
\begin{equation}
W_1 = \underline{\Sigma}_{\,c}^{-1} \, ,
\quad\;
W_2 = \underline{\Sigma}_{\,\xi}^{-1} \, ,
\quad\;
\underline{\Sigma}_{\, c} = \frac{1}{\underline{\sigma}_{\, c}^2} \, \Sigma_{ c} \, ,
\quad\;
\underline{\Sigma}_{\,\xi} = \frac{1}{\underline{\sigma}_{\,\xi}^2} \Sigma_{\xi} \, .
\label{eq:MAPsigm}
\end{equation}
\end{proposition}

The distribution hyperparameters $\sigma_r^2$, $\Sigma_c$, $\Sigma_\xi$ - and, then, the model parameters $\lambda_1,\lambda_2$ and $W_1,W_2$ via (\ref{eq:MAPpars})-(\ref{eq:MAPsigm}) - can be estimated using a local Maximum Likelihood (ML) strategy embedded into an iterative solver to the model, adapted from \cite{SIAMrev2021,10.1007/978-3-319-68195-5_17}. In our case, we extend this framework from weighted Total Variation (TV)-like semi-norms to squared weighted $H$-norms.

Not only to simplify the estimation but also to make the estimated parameters more effective in terms of the quality of the corresponding decomposition, we adopt the following simplifications: 
(i) \( \Sigma_{c,x} = \Sigma_{c,y} \) and $\Sigma_{\xi,x} = \Sigma_{\xi,y}$, so that only two diagonal matrices \( \Sigma_c = \diag(\sigma_{c,1}^2,\ldots,\sigma_{c,n}^2) \) and $\Sigma_\xi = \diag(\sigma_{\xi,1}^2,\ldots,\sigma_{\xi,n}^2)$ need to be estimated; (ii) instead of estimating $\sigma_r$ and then setting $\lambda_1,\lambda_2$ via (\ref{eq:MAPpars}), $\lambda_1,\lambda_2$ are fixed in advance, hence only $W_1,W_2$ need to be estimated (\( \sigma_r^2 \) does not require estimation).

The basic idea of the estimation approach is that since the two regularization terms in \eqref{eq:our} come deductively from precise assumptions on the distribution of $c$ and $\xi$, then the pixel-based weights can be inferred by ML estimation of the hyperparameters that characterize the pixel-wise distribution.

To illustrate the pixel-wise estimation procedure of the target variances $\sigma_{c,i}^2$, $i = 1,\ldots,n$, we focus on a generic pixel and denote by $\sigma^2$ the target variance.
Then, we consider a square symmetric neighborhood of the pixel of radius N pixels - that is, a $(2 \mathrm{N}+1) \times (2 \mathrm{N}+1)$ neighborhood - and define the sample set for the estimation as the set of values of the considered variable, that we denote by $v$, in the neighborhood,
\begin{equation}
\mathcal{S} := \left\{ v_1, \ldots, v_Z \right\} \, , \quad \text{with}\;\; Z = (2\mathrm{N}+1)^2 \, .
\end{equation}
The samples in $\mathcal{S}$ are regarded as $Z$ independent realizations from the same distribution; in particular, based on the assumption \eqref{eq:H2} on the distribution of $c$, which can be regarded as  assuming a zero-mean Gaussian distribution with variance $\sigma^2_{c,i}$ for the gradient norm $\| (\nabla c)_i \|_2$ at each pixel, the negative log-likelihood of $\mathcal{S}$ reads
\begin{eqnarray}
-\ln p \left(\mathcal{S} \mid \sigma^2\right) 
& {=} & 
\frac{Z}{2} \ln (2 \pi) + \frac{Z}{2} \ln \sigma^2 
+\frac{1}{2 \sigma^2} \sum_{j=1}^Z v_j^2 \, .
\end{eqnarray}
It follows that the maximum likelihood (or, equivalently, the minimum negative log-likelihood) estimate $\widehat{\sigma}^2$ of the variance $\sigma^2$ is simply given by
\begin{equation}
\widehat{\sigma}^2 = \arg \min_{\sigma^2} \left\{ 
\frac{Z}{2} \ln \sigma^2 
+\frac{1}{2 \sigma^2} \sum_{j=1}^Z v_j^2 \right\}
=
\frac{1}{Z} \sum_{j=1}^Z v_j^2 .
\label{eq:rrr}
\end{equation}
 
Using the pixel-wise estimation formula above for all pixels, we can easily compute an estimate of the total diagonal covariance matrix $\Sigma_c$, reading
\begin{equation}
\widehat{\Sigma}_c = \diag\left(\widehat{\sigma}_{c,1}^2,\ldots,\widehat{\sigma}_{c,n}^2\right) \, ,
\end{equation}
then, we compute
\begin{equation}
\widehat{\underline{\sigma}}_{\,c}^2 := \min_{i=1,\ldots,n} \widehat{\sigma}_{c,i}^2
\;\;{\Longrightarrow}\;\;
\widehat{\underline{\Sigma}}_{\,c} = \frac{1}{\widehat{\underline{\sigma}}_{\,c}^2} \widehat{\Sigma}_c \, .
\end{equation}
The above procedure can be similarly carried out to estimate the diagonal covariance matrix $\widehat{\Sigma_{\xi}}$ of the texture components.

Finally, the parameters of the model (regularization parameters $\lambda_1$, $\lambda_2$ and the weight matrices $W_1$, $W_2$) are fixed/estimated based on the formulas in \eqref{eq:MAPpars}--\eqref{eq:MAPsigm}.
In particular, in accordance with \eqref{eq:rrr}, the weight $\omega_{c,q}$ associated to the $q$-th pixel location in the vectorized image $c$ - corresponding to the pixel location $(i,j)$ in the original image - is computed by
\begin{equation}
\widehat{\omega}_{c,q} = \biggl(\epsilon+\frac{1}{2 Z}\sum_{(l,m)\in \mathcal{N}_{i,j}^{\mathrm{N}}}\left\| (\nabla c)_{l,m}\right\|_2^2\biggr)^{-1},
\label{eq:PRMET}
\end{equation}
where $\mathcal{N}_{i,j}^{\mathrm{N}}$ indicates the square neighborhood of radius $\mathrm{N}$ pixels, and the fixed parameter $\epsilon>0$ prevents division by zero.

The approach outlined above relies on knowing the two sought components $c$ and $\xi$, which is clearly not the case. Therefore, we propose an iterative procedure.
Starting with $c^{(0)} = f$ and $W_1^{(0)}=W_2^{(0)}=\mathrm{I}_{2n}$, the weight matrices are updated, according to \eqref{eq:PRMET}, into $W_1^{(k+1)}$ and $W_2^{(k+1)}$, and, then, the decomposition components are updated, by solving the quadratic optimization problem \eqref{eq:our} - that is, by solving the linear system in \eqref{eq:normalA_main} - into $c^{(k+1)}$ and $\xi^{(k+1)}$.

\vspace{0.5cm}

This probabilistic approach for identifying the weight parameters, and consequently the decomposition components, can produce high-quality results, as it will be shown in the experimental section. However, it operates under a single-instance paradigm, where the model relies solely on a single observed image \( f \) to infer its constituent components \( c \) and \( t \).  

When multiple labeled training pairs are available, the model can benefit from a supervised multi-instance framework. Unlike the single-instance method, which must rely entirely on the structural information in a single image, the supervised setting enables learning to generalize across diverse examples. This added information allows the model to predict spatially adaptive weights more accurately, potentially leading to improved decompositions.  
These observations motivate the neural-guided variational decomposition framework developed in the next section.

\section{Neural-Guided Variational Decomposition (NGVD) framework}
\label{sec:NGVD}
The selection of optimal model parameters is formulated as a multi-instance supervised learning framework, where we assume access to $M$ training samples 
\begin{equation}
\{(f^{(i)}, g^{(i)})\}_{i=1}^M \, , \quad \text{with} \;\; 
g^{(i)} := (c^{(i)}, t^{(i)}) \, ,
\label{eq:dataset}
\end{equation}
denoting the desired cartoon and texture components. To enable data-driven decomposition, we introduce two parameterized prediction maps:
\begin{equation}
\lambda = \Lambda_{\Theta_1}(f) \,, \qquad
W = \mathcal{W}_{\Theta_2}(x) \,,
\label{eq:WL}
\end{equation}
where $\Lambda_{\Theta_1}$ is a scalar multilayer perceptron (MLP) that outputs the regularization parameters $\lambda := [\lambda_1,\lambda_2]$, and $\mathcal{W}_{\Theta_2}$ is a convolutional U-Net that predicts spatially adaptive weights $W := [W_1, W_2]$. The full network is parameterized by $\Theta = (\Theta_1, \Theta_2)$.

The identification of the optimal parameters $\widehat{\Theta}$ of the two networks, useful at prediction time to determine the variational model parameters $(\lambda_1, \lambda_2,W_1,W_2)$ is formulated as the solution of the following bilevel optimization problem:
\begin{subequations}
\label{eq:NGVD_bilevel}
\begin{align}
&\widehat{\Theta} \in \arg \min_{\Theta} \;\;
 \frac{1}{2M} \sum_{i=1}^M 
\left\|
\mathcal{D}\widehat{x}^{(i)}(\Theta) - g^{(i)}
\right\|_2^2 \quad \text{s.t.}
\label{eq:NGVD_upper}
\\[4pt]
& \widehat{x}^{(i)}(\Theta)
= \argmin_{x \in \mathbb{R}^{n+2n}}
\left\{
\frac{1}{2}\|Sx - f^{(i)}\|_2^2
+ \frac{\lambda_1^{(i)}}{2}\|Gx\|_{W_1^{(i)}}^2
+ \frac{\lambda_2^{(i)}}{2}\|Rx\|_{W_2^{(i)}}^2
\right\}\!,
\label{eq:NGVD_lower}
\\[2pt]
& \lambda^{(i)} = \Lambda_{\Theta_1}\!\left(f^{(i)}\right), 
\;\;
W^{(i)} = \mathcal{W}_{\Theta_2}\!\left(\widehat{x}^{(i)}\right),
\;\; i=1,\ldots,M,
\label{eq:NGVD_maps}
\end{align}
\end{subequations}

with $g^{(i)}$ defined in \eqref{eq:dataset}, $\mathcal{D}$ represents the block diagonal operator \(\mathcal{D} : \mathbb{R}^{n+2n} \to \mathbb{R}^{2n}\), acting on \(\hat{x} = (c, \xi)\), as:
\[
\mathcal{D} := 
\begin{bmatrix}
I & 0 \\
0 & \operatorname{div}
\end{bmatrix}.
\]
The upper-level loss function represents the Mean Square Error (MSE) metrics of goodness of the estimated parameters $\Theta$, and the lower-level minimization problem aims at computing the solution components, giving two fixed $\Theta$-parametrized maps. 

The proposed training procedure, detailed in Algorithm \ref{alg:train}, follows the above bilevel optimization paradigm. This approach bridges the gap between classical variational methods and deep learning by embedding the GVD physical model  within a supervised learning framework.

The procedure begins by passing the input image $f^{(i)}$ through a parameter predictor $\Lambda_{\Theta_1}$. Unlike traditional variational methods that rely on manually tuned hyperparameters, this neural-guided component learns to map image features to optimal regularization parameters $(\lambda_1^{(i)}, \lambda_2^{(i)})$. This ensures that the decomposition is tailored to the specific structural characteristics of each observation.

\begin{algorithm}[t]
\caption{Training the NGVD framework}
\label{alg:train}
\begin{algorithmic}[1]
\REQUIRE Dataset $\{(f^{(i)},g^{(i)})\}_{i=1}^M$
\ENSURE Weights $\Theta$ for operators $\Lambda_{\Theta_1}$ and $\mathcal{W}_{\Theta_2}$
\WHILE{not converged}
  \FOR{$i \gets 1$ \TO $M$}
    \STATE $(\lambda_1^{(i)},\lambda_2^{(i)}) \gets \Lambda_{\Theta_1}(f^{(i)})$ \COMMENT{regularization parameters}
    \STATE $\widehat{x}_0^{(i)} \gets (f^{(i)},0)$
    \FOR{$k \gets 1$ \TO $K$}
      \STATE $(W_1^{(i)},W_2^{(i)}) \gets \mathcal{W}_{\Theta_2}(\widehat{x}_{k-1}^{(i)})$
      \STATE Solve for $\widehat{x}_k^{(i)}$:
      \STATE \hspace{1.6em}$A(W_1^{(i)},W_2^{(i)})\,x = S^\top f^{(i)}$
    \ENDFOR
    \STATE Update loss with $\widehat{x}^{(i)}$ in \eqref{eq:NGVD_lower}
  \ENDFOR
  \STATE $\Theta \gets$ minimize the loss in \eqref{eq:NGVD_upper}
\ENDWHILE
\STATE \textbf{return} $\widehat{\Theta}$
\end{algorithmic}
\end{algorithm}

\begin{algorithm}[t]
\caption{Prediction by the NGVD framework}
\label{alg:test}
\begin{algorithmic}[1]
\REQUIRE Observation $f$, number of iterations $K$, $\Lambda_{\Theta_1}$, $\mathcal{W}_{\Theta_2}$
\ENSURE Decomposed components $\widehat{x}=(\widehat{c},\widehat{t})$
\STATE $(\lambda_1,\lambda_2) \gets \Lambda_{\Theta_1}(f)$ \COMMENT{regularization parameters}
\STATE $\widehat{x}_0 \gets (f,0)$
\FOR{$k \gets 1$ \TO $K$}
  \STATE $(W_1,W_2) \gets \mathcal{W}_{\Theta_2}(\widehat{x}_{k-1})$
  \STATE Solve for $\widehat{x}_k$:
  \STATE \hspace{1.6em}$A(W_1,W_2)\,x = S^\top f$
\ENDFOR
\STATE \textbf{return} $(\widehat{c},\widehat{t}) \gets \mathcal{D}\widehat{x}_K$
\end{algorithmic}
\end{algorithm}

The core of the algorithm is an inner loop of $K$ iterations for the weights refinement. In each step $k$, a second neural network, $\mathcal{W}_{\Theta_2}$, observes the current state of the decomposition $\widehat{x}_{k-1}$ to update the weighting operators $(W_1, W_2)$.
Rather than treating the decomposition as a ``black-box'' regression, the algorithm solves the linear system $A(W_1, W_2)x = S^\top f$ approximately via CG (at most $80$ iterations, tolerance $\epsilon=10^{-6}$), warm-started from $\widehat{x}_{k-1}$. This ensures that the output $\widehat{x}_k$ always satisfies the underlying variational principles of the cartoon-texture model, while the neural network guides the trajectory toward the ground truth.

By training on a dataset of $M$ labeled samples $\{ (f^{(i)}, g^{(i)}) \}_{i=1}^M$, the framework leverages the ``multi-instance'' advantage discussed previously. While the final model can operate in a single-instance mode (performing inference on one new image), the training phase uses the collective insights of the entire dataset. This allows the parameters $\Theta$ to generalize across various textures and geometries, leading to a more robust and ``insightful'' decomposition than could be achieved by optimizing a single image in isolation.

The obtained weights $\Theta$ allow for the construction of the prediction operators  ${\Lambda}_{\Theta_1}, \mathcal{W}_{\Theta_2}$ in \eqref{eq:WL}, which are then used in the inference process for the decomposition of an observed image $f$, as described in Algorithm \ref{alg:test}. The decomposition predictive algorithm clarifies the roles of model design and data-driven numerical optimization: the outer loop is responsible for producing reliable structural guidance (by network-based weight refinement), while the inner minimization exploits the simple quadratic form of the functional to compute accurate reconstructions given that guidance. 


To leverage the power of multi-instance supervised learning, we allowed both the weight-prediction mechanism and the estimate of the regularization parameters to be jointly optimized, enabling the model to adaptively learn decomposition strategies from data and thereby achieve superior performance in separating cartoon and texture components. 

Central to this embedding are two neural networks: a scalar MLP, denoted by $\Lambda_{\Theta_1}$, for predicting the regularization parameters $\lambda_1,\lambda_2$ in the variational model, and a U-Net, named  $\mathcal{W}_{\Theta_2}$, for generating spatially adaptive weight matrices $W_1,W_2$. 

The regularization parameters $\lambda_1, \lambda_2 \in \R_{++}$ 
in \eqref{General_obj}, are estimated once at the beginning from the input observation $f$ 
by $\Lambda_{\Theta_1}$ in \eqref{eq:WL}. 
This network outputs two positive scalars, ensuring positivity through a {\em softplus} activation function in the final layer, defined as
\begin{equation}
\label{eq:softplus}
\operatorname{softplus}(s) := \log(1 + e^s),
\end{equation}
which smoothly enforces strict positivity. 
The $\Lambda_{\Theta_1}$ first applies global average pooling over all spatial dimensions, reducing the input to a channel-wise feature vector of length~$C$,  followed by a fully connected layer with ReLU activation and a fully connected layer with softplus activation, as illustrated in Figure~\ref{fig:placeholder}(b).
Since the fully connected layers depend only on the channel 
dimension~$C$, the network is resolution-agnostic and can be applied at 
inference time to images of arbitrary size without retraining. This is consistent with the 
role of $\lambda_1$ and $\lambda_2$ as image-level global scalars: 
predicting them from a global image descriptor -- rather than from 
spatially varying features -- is both natural and sufficient, as 
confirmed by the ablation study in Table~\ref{tab:ablation_results}. Pixel-level spatial 
adaptivity is instead the responsibility of the weight-predicting operator $\mathcal{W}_{\Theta_2}$.
By predicting these parameters directly from the input, the network can tailor the global trade-offs to the specific characteristics of the observed image, without manual tuning.

The weight-predicting operator $\mathcal{W}_{\Theta_2}$ is implemented as a lightweight U-Net, which generates the diagonal entries of the two positive definite matrices $(W_1,W_2)$ at each iteration $k$. 
To provide rich contextual information, $\mathcal{W}_{\Theta_2}$ takes as input a concatenation of the reconstructed cartoon and texture component estimator
$$(W_1, W_2) = \mathcal{W}_{\Theta_2}( \widehat x_{k-1}).$$
The U-Net architecture, detailed in Figure~\ref{fig:placeholder}(c), features an encoder-decoder structure with convolutional layers, 
Leaky ReLU activations, max-pooling for downsampling, and upconvolutions for upsampling. 
The output layer employs a {\em sigmoid} activation function, defined as
\begin{equation}
\label{eq:sigmoid}
\operatorname{sigmoid}(s) := \frac{1}{1 + e^{-s}},
\end{equation}
to guarantee strictly positive weight maps, aligning with the requirements for convexity and stability.

\begin{figure}[tbp]
    \centering
    \includegraphics[width=1\linewidth]{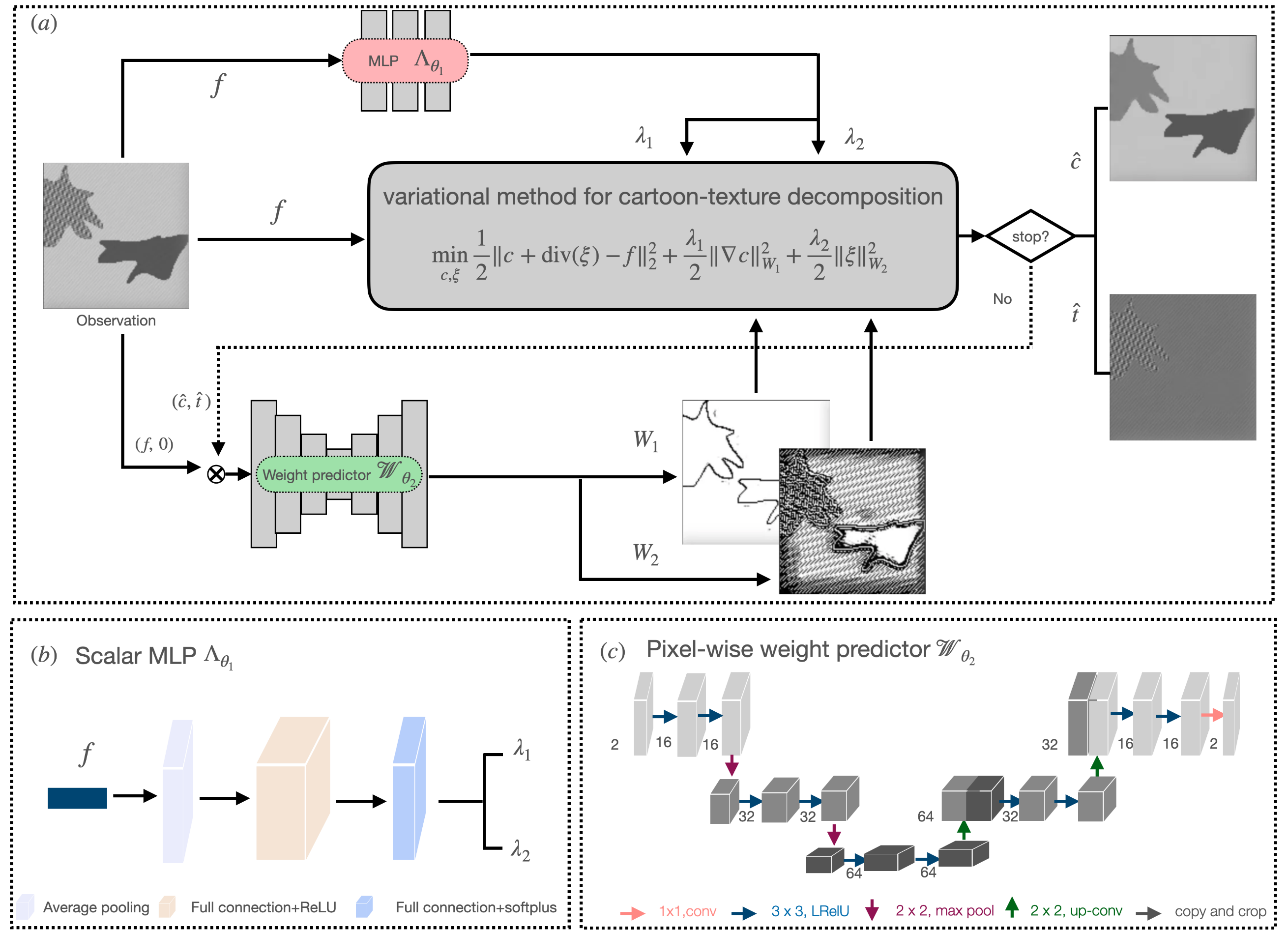}
    \caption{Overview of the proposed NGVD framework: (a) the guided variational network for image decomposition; (b) the scalar MLP predicting global regularization scalars \(\lambda_1,\lambda_2\); (c) the detailed structure of weight predictor \(\mathcal W_{\Theta_2}\), initialized with $\widehat{x}_0 = (f,0).$}
    \label{fig:placeholder}
\end{figure}

The complete framework, named \textbf{Neural Guided Variational Decomposition}, encapsulates the iteratively guided decomposition  procedure with these learned components:
$$(\widehat c, \widehat t ) = \widehat x_{K}  := \mathrm{NGVD}\big(f; \Lambda_{\Theta_1}, \mathcal{W}_{\Theta_2}\big),$$
where $\mathrm{NGVD}$ refers to the guided variational decomposition workflow outlined in  Algorithm~\ref{alg:test}, using the neural weight-prediction in Algorithm~\ref{alg:train}. 

Figure~\ref{fig:placeholder} provides an overview of the proposed NGVD framework, illustrating the guided variational decomposition pipeline (a), the scalar MLP for global regularization parameters (b), and the pixel-wise U-Net for weight prediction (c). This design bridges classical variational methods with deep learning, combining the interpretability and stability of optimization-based decompositions with the flexibility of data-driven adaptation.

\section{Convergence Analysis for NGVD}
\label{sec:theory}

In this section, we cast the proposed variational decomposition and the iteratively guided solver into a single fixed-point framework and state the main theoretical properties: (i) uniqueness of each fixed-weight iteration, (ii) existence of a fixed point, (iii) a sufficient, verifiable contractivity condition that guarantees convergence to a unique fixed point, and (iv) stability estimates with respect to perturbations in the observed data. This fixed-point viewpoint and strategy are inspired by \cite{lenzen2015solution,pourya2025dealing}. Note that \(\lambda_1\) and \(\lambda_2\) are predicted once from the input \(f\) and remain global and fixed throughout the iterations, while weight-predicting operator \(\mathcal{W}_{\Theta}\) (we slightly misuse the symbol $\Theta_2$ as $\Theta$ in this section) is shared across all iterations.

\subsection{Fixed-Point Reformulation and Notation}
For learned diagonal, positive definite weig-ht matrices \((W_1,W_2):=\mathcal{W}_{\Theta}(x)\) and fixed positive scalars \(\lambda_1,\lambda_2>0\), the minimizer of objective function \eqref{General_obj}, i.e.,
\[
J(x;\mathcal{W}_{\Theta})
=\frac{1}{2}\|Sx-f\|_2^2
+\frac{\lambda_1}{2}\|Gx\|_{W_1}^2
+\frac{\lambda_2}{2}\|Rx\|_{W_2}^2,
\]
can be expressed as
\begin{equation}\label{fixpointmap}
    \mathcal{T}_{\theta}(x) = \argmin_{x}\; J(x;\mathcal{W}_{\Theta}).
\end{equation}

The corresponding normal equation is
\begin{equation}
\label{eq:solve}
A(W_1,W_2)\, x \;=\; b,\qquad
A(W_1,W_2) :=  S^\top S + \lambda_1 G^\top W_1 G + \lambda_2 R^\top W_2 R,
\end{equation}
with \(b := S^\top f\). Then, the refinement scheme coincides with the Picard (fixed-point) iteration
\begin{equation}
x_{k} \;=\; \mathcal{T}_{\theta}(x_{k-1}) \;,
\label{eq:fp}
\end{equation}
where in \eqref{eq:solve} \((W_1,W_2)=\mathcal{W}_{\Theta}(x_{k-1})\) as the adaptive weights corresponding to \(\mathcal{T}_{\theta}(x_{k-1})\).
In every iteration $k$, we ``freeze'' the weights based on $x_{k-1}$, solve the now-linear system of normal equations and obtain the new $x_k$.

The process \eqref{eq:fp} can be interpreted as an infinite-depth neural network.
If $x_k \rightarrow \widehat{x}$, then   $\mathcal{T}_{\theta}(\widehat{x})=\widehat{x}$, and $\widehat{x}$ is a fixed point of the operator.
Theoretically, the network is trained to turn $\mathcal{T}_{\theta}$ into a contraction towards the desired solution.

\subsection{Admissible Weights and Computable Constants}

During all iterations, we restrict admissible diagonal weight matrices to satisfy uniform bounds
\[
\omega_{\min} I \preceq W_i \preceq \omega_{\max} I,\qquad i=1,2,
\]
for some constants \(0<\omega_{\min}\le\omega_{\max}<1\) (these are enforced in practice by final sigmoid activation \eqref{eq:sigmoid} + clipping of the U-Net outputs). Introduce the stacked operator
\begin{equation}\label{defineMforlowerbound}
    \mathcal{M} :=
\begin{bmatrix}
\,S\\[4pt]
\sqrt{\lambda_1\,\omega_{\min}}\,G\\[4pt]
\sqrt{\lambda_2\,\omega_{\min}}\,R
\end{bmatrix}.
\end{equation}
The associated Gramian matrix $\mathcal{M}^\top \mathcal{M}$ is full-rank. In fact, it is given by $\mathcal{M}^\top \mathcal{M} ~= A(\omega_{\min}I,\, \omega_{\min}I)$, which is exactly the system matrix of Proposition~3.1 evaluated at $W_1 = W_2 = \omega_{\min}I$, that is positive definite. It follows that \(\mathcal{M}\) is full column rank.
Define the lower-bound constant
\begin{equation}\label{singlerM}
    \alpha := \sigma_{\min}^2(\mathcal{M}).
\end{equation}
Here, \(\sigma_{\min}(\mathcal{M})\) is the smallest singular value of \(\mathcal{M}\) and $\sigma_{\min}(\mathcal{M})>0$ since the full column rank property of $\mathcal{M}$.    We denote operator norms \(\|S\|,\|G\|,\|R\|\) (spectral norms) and the Euclidean norm of the vectorized measurement \(\|f\|_2\).

\subsection{Bounding the Lipschitz Constant of $\mathcal{W}_{\Theta}$}
\label{subsec:lipschitz_bound}
To ensure the contractivity condition for convergence, we derive a computable upper bound on the Lipschitz constant $L_{\mathcal{W}}$ of $\mathcal{W}_{\Theta}$, using real spectral normalization (realSN \cite{ryu2019plug}) on its convolutional layers. RealSN extends spectral normalization \cite{miyato2018spectral} by directly computing the spectral norm of the convolutional operator via power iteration on tensor representations, without reshaping kernels into matrices. Specifically, for each convolutional layer with kernel $K_l$, realSN maintains singular vector estimates $U_l, V_l$ and performs power iterations: $V_l \leftarrow K_l^*(U_l) / \|K_l^*(U_l)\|_2$, $U_l \leftarrow K_l(V_l) / \|K_l(V_l)\|_2$, where $K_l^*$ is the adjoint convolution. The kernel is then normalized as $K_l \leftarrow c_l K_l / \sigma(K_l)$, with $\sigma(K_l) = \langle U_l, K_l(V_l) \rangle$, ensuring the layer's Lipschitz constant is at most $c_l$. This enables control over the network's overall Lipschitz constant during training, as detailed in \cite{ryu2019plug}. Although our U-Net includes max-pooling and bilinear upsampling, these operations have bounded Lipschitz constants (e.g., max-pooling and bilinear interpolation are 1-Lipschitz under the $\ell_2$-norm). Note that realSN is employed here solely for the purpose of theoretical convergence analysis and is not utilized in the actual training process; the weight bounds are instead enforced through activation functions and clipping in practice.
\begin{proposition}[Computable Lipschitz bound for $\mathcal{W}_{\Theta}$]\label{prop:LW_bound}
Assume $\mathcal{W}_{\Theta}$ is a U-Net with $N$ convolutional layers (each real spectrally normalized with any factor $c_i > 0$) followed by Leaky ReLU activations (1-Lipschitz). Then, the Lipschitz constant satisfies
$$L_{\mathcal{W}} \le \kappa := \prod_{i=1}^N c_i.$$
\end{proposition}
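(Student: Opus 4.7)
The plan is to invoke the standard composition rule for Lipschitz mappings: if $f = f_K \circ \cdots \circ f_1$ is a composition of Lipschitz mappings, then $\mathrm{Lip}(f) \le \prod_k \mathrm{Lip}(f_k)$. I would write $\mathcal{W}_{\Theta}$ as a sequential composition of its elementary operations (convolutions, activations, pooling/upsampling, and the skip-connection concatenations) and then bound each factor separately, observing that only the convolutions may contribute Lipschitz constants different from $1$.

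First I would bound each elementary piece under the $\ell_2$-norm. Each real spectrally normalized convolution $K_l$ satisfies $\|K_l\|_{\mathrm{op}} \le c_l$ by construction of realSN: the power iteration on $(U_l,V_l)$ together with the rescaling $K_l \leftarrow c_l K_l/\sigma(K_l)$ enforces exactly this bound, as recalled from \cite{ryu2019plug}. A Leaky ReLU with negative slope $\alpha \in (0,1]$ is 1-Lipschitz since its almost-everywhere defined derivative lies in $\{\alpha,1\}$, both of magnitude at most $1$. Max-pooling is 1-Lipschitz because the pointwise inequality $|\max_i u_i - \max_i v_i| \le \max_i|u_i-v_i|$ lifts to the $\ell_2$ setting after summing over windows. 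Bilinear upsampling is 1-Lipschitz because each output coordinate is a convex combination of input coordinates with non-negative weights summing to one, and a direct application of Jensen's inequality yields the bound. Finally, concatenation is norm-preserving in the product space since $\|[u;v]\|_2^2 = \|u\|_2^2 + \|v\|_2^2$.

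Next I would view $\mathcal{W}_{\Theta}$ as a directed acyclic computational graph and compose the bounds above along each input-to-output path. Any such path traverses a subset of the $N$ convolutions, so its Lipschitz constant is bounded by $\prod_{i\in\text{path}} c_i \le \prod_{i=1}^N c_i$, while all non-convolutional blocks contribute multiplicative factors of $1$. Aggregating the branches at concatenation nodes and taking the supremum over admissible perturbations yields $L_{\mathcal{W}} \le \kappa := \prod_{i=1}^N c_i$. The main obstacle is the rigorous handling of the U-Net skip connections: when a signal branches into a skip path $u(x)$ and a deeper path $v(x)$ that are later concatenated, the joint map $x\mapsto(u(x),v(x))$ strictly has Lipschitz constant $\sqrt{L_u^2+L_v^2}$ rather than $L_u\cdot L_v$. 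I would handle this by bounding each branch separately by the global product $\prod_{i=1}^N c_i$ (since each branch traverses only a subset of the convolutions) and then invoking the triangle-type estimate at the merge, so that the final bound remains dominated by $\kappa$; the resulting estimate is admittedly conservative but explicit and computable, which is exactly what the contractivity analysis in the remainder of Section~\ref{sec:theory} requires.
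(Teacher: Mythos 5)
The paper itself never proves this proposition: the appendix explicitly lists proofs only for Proposition~\ref{prop:MAP}, Lemma~\ref{Alowerbound}, Theorems~\ref{thm:lipschitz_main}--\ref{thm:exist_converge_main} and Lemma~\ref{lem:perturbapx}, and the justification for Proposition~\ref{prop:LW_bound} is the informal composition-rule discussion in Section~\ref{subsec:lipschitz_bound}, which implicitly treats the network as a single sequential chain. Your attempt is therefore more careful than the paper's own treatment, and for a purely sequential architecture (convolutions of operator norm at most $c_i$ interleaved with $1$-Lipschitz activations, pooling and upsampling) your composition argument is correct and standard. The problem is that the two places where you go beyond the sequential case are exactly where the argument breaks, and you do not actually close either of them.

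First, the inequality $\prod_{i\in\text{path}} c_i \le \prod_{i=1}^N c_i$ is false unless every $c_i \ge 1$: the proposition allows arbitrary $c_i>0$, and if a convolution with $c_j<1$ lies off a given path, omitting it makes the path product \emph{larger} than $\kappa$ (e.g.\ $N=2$, $c_1=1$, $c_2=0.1$, and a skip path through only the first convolution has Lipschitz constant up to $1 \not\le 0.1=\kappa$). Second, and more seriously, the merge at a skip connection genuinely produces the constant $\sqrt{L_u^2+L_v^2}$ you identify, and no ``triangle-type estimate'' reduces this below $\kappa$: with all $c_i=1$ each branch is $1$-Lipschitz, the concatenated map $x\mapsto\bigl(u(x),v(x)\bigr)$ can have Lipschitz constant $\sqrt{2}$, and the subsequent spectrally normalized convolution $K$ only gives $\|K_1 a + K_2 b\|_2 \le \|K\|\sqrt{\|a\|_2^2+\|b\|_2^2}$, so the $\sqrt{2}$ survives. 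The honest conclusion of your argument is $L_{\mathcal W}\le 2^{D/2}\max_{\text{paths}}\prod_{i\in\text{path}}c_i$ for a U-Net with $D$ concatenation merges, not $L_{\mathcal W}\le\prod_{i=1}^N c_i$. This does not damage the downstream theory --- Theorem~\ref{thm:exist_converge_main} only needs \emph{some} computable $\kappa$ with $\mathcal{Q}<1$, so the extra factor can be absorbed into the definition of $\kappa$ --- but as written your final step asserts an inequality that your own intermediate bounds contradict, so the proof of the proposition as stated does not go through.
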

\begin{proof}
By spectral normalization, each convolutional layer $l$ has Lipschitz constant at most $c_l$, and each Leaky ReLU activation is $1$-Lipschitz. Since the composition of Lipschitz maps has Lipschitz constant bounded by the product of the individual constants \cite{miyato2018spectral,ryu2019plug}, the result follows.
\end{proof}

RealSN makes the per-layer bounds explicit and computable post-training. If $\kappa$ exceeds the desired value for contractivity, the factors $c_i$ can be adjusted to reduce it, or alternatively, renormalize U-Net outputs by a factor to scale $L_{\mathcal{W}}$ down, adjusting $\omega_{\min}, \omega_{\max}$ accordingly while maintaining the admissibility bounds. This bound, inspired by Lipschitz analyses, enables fully computable convergence criteria below.

\subsection{Theoretical Results}
\label{sec:tr}
In this subsection, we present the main theoretical results. Their complete proofs are given in Appendix~\ref{app:proofs}.

\begin{lemma}
\label{Alowerbound}
Let \(\mathcal M\) be defined as in \eqref{defineMforlowerbound}.
For any \(\lambda_1,\lambda_2>0\) and  weight matrices  $(W_1,W_2)$ satisfying the bound \(\omega_{\min} I \preceq W_i \preceq \omega_{\max} I, i=1,2\),
we have the following lower bound
\[
x^\top A(W_1,W_2) x \ge \| \mathcal M x\|_2^2 \ge \sigma_{\min}^2(\mathcal M)\,\|x\|_2^2,
\]
\noindent and upper bound
\begin{equation}
\|A(W_1,W_2)^{-1}\| \le \frac{1}{\alpha} \, ,
\label{eq:AinvL}
\end{equation}
where \(\alpha\) is given in \eqref{singlerM}.
\end{lemma}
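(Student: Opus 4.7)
The plan is to prove both inequalities by a direct expansion of the quadratic form, followed by a weight-replacement step and a Rayleigh-quotient argument. First I would write
\[
x^\top A(W_1,W_2)\,x \;=\; \|Sx\|_2^2 \;+\; \lambda_1\,(Gx)^\top W_1 (Gx) \;+\; \lambda_2\,(Rx)^\top W_2 (Rx),
\]
using the block definition of $A(W_1,W_2)$ and the identity $\|y\|_W^2 = y^\top W y$. Then I would invoke the uniform admissibility bound $W_i \succeq \omega_{\min} I$ to get $(Gx)^\top W_1 (Gx) \ge \omega_{\min}\|Gx\|_2^2$ and $(Rx)^\top W_2 (Rx) \ge \omega_{\min}\|Rx\|_2^2$. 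Substituting these back and comparing with the block definition \eqref{defineMforlowerbound} of $\mathcal{M}$, the resulting sum is precisely $\|\mathcal{M}x\|_2^2$, which gives the first inequality.

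Next, the second inequality follows from the standard Rayleigh/singular-value characterization: since $\mathcal{M}$ has full column rank, $\sigma_{\min}(\mathcal{M})>0$ and $\|\mathcal{M}x\|_2^2 \ge \sigma_{\min}^2(\mathcal{M})\|x\|_2^2 = \alpha\|x\|_2^2$ for every $x$. Chaining the two inequalities yields $x^\top A(W_1,W_2) x \ge \alpha \|x\|_2^2$ for all $x$.

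For the operator-norm bound, I would combine the coercivity just obtained with the symmetry and positive definiteness of $A(W_1,W_2)$ established in Proposition~\ref{prop1}. This implies $\lambda_{\min}(A(W_1,W_2)) \ge \alpha$, and since $A(W_1,W_2)$ is symmetric positive definite, its spectral norm inverse equals the reciprocal of the smallest eigenvalue, i.e., $\|A(W_1,W_2)^{-1}\| = 1/\lambda_{\min}(A(W_1,W_2)) \le 1/\alpha$, which gives \eqref{eq:AinvL}.

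The only mildly nontrivial point is to justify $\sigma_{\min}(\mathcal{M})>0$, which the statement takes for granted but is worth double-checking. This amounts to verifying injectivity of $\mathcal{M}$: if $\mathcal{M}x=0$ with $x=(c,\xi)$, then the third block forces $Rx=\xi=0$, and the first block then yields $Sx = c + \operatorname{div}(\xi) = c = 0$. Hence $x=0$, confirming that $\mathcal{M}$ is injective and $\alpha>0$, so the bound $1/\alpha$ is finite and independent of the admissible weights $(W_1,W_2)$. This uniformity in the weights is exactly what is needed for the outer fixed-point analysis in the remainder of Section~\ref{sec:theory}.
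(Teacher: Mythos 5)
Your proof is correct and follows essentially the same route as the paper's: expand the quadratic form, apply $W_i \succeq \omega_{\min} I$ termwise to recognize $\|\mathcal{M}x\|_2^2$, then use the singular-value bound and $\|A^{-1}\| = 1/\lambda_{\min}(A)$ for the symmetric positive definite matrix. Your explicit verification that $\mathcal{M}$ is injective (via $\xi = 0$ from the $R$-block and then $c = 0$ from the $S$-block) is a welcome addition that the paper only asserts.
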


\begin{theorem}[Lipschitz bound]
\label{thm:lipschitz_main}
Let \(\mathcal{W}_{\Theta}\) denote the weight operator mapping \(x\) to diagonal matrices $W_1$ and $W_2$. Let \(L_{\mathcal{W}}\) be the Lipschitz constant of   \(\mathcal{W}_{\Theta}\) on the ball \(\mathcal{B}=\{x:\|x\|_2\le r\}\), i.e.,
\[
\|\mathcal{W}_{\Theta}(x)-\mathcal{W}_{\Theta}(y)\|
\le L_{\mathcal{W}} \|x-y\|_2,\quad \forall x,y\in\mathcal{B},
\]

where $\|\mathcal{W}_\Theta(x)\| := \| \, \omega(x) \, \|_{\infty}$, with $\omega(x) = [ \, \mathrm{diag}(W_1(x)),\mathrm{diag}(W_2(x))\,] \in \R^{4n}$. 

Then, the mapping \(\mathcal{T}_{\theta}(x)\) is \(L_{\mathcal{T}}\) Lipschitz on \(\mathcal{B}\), i.e.
\[
\|\mathcal{T}_{\theta}(x)-\mathcal{T}_{\theta}(y)\|_2 \le L_{\mathcal{T}} \|x-y\|_2,
\]
with the explicit upper bound
\[
L_{\mathcal{T}} \le \frac{(\lambda_1\|G\|^2+\lambda_2\|R\|^2)\,L_{\mathcal{W}}\,\|S\|\,\|f\|_2}{\alpha^2},
\]
where \(\alpha\) is defined in \eqref{singlerM}. 
\end{theorem}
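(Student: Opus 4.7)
The plan is to treat $\mathcal{T}_\theta$ as a composition of the data-dependent weight map $\mathcal{W}_\Theta$ with the ``solve'' map $(W_1,W_2)\mapsto A(W_1,W_2)^{-1}b$, where $b=S^\top f$ is independent of $x$. Since both argument vectors share the same right-hand side, the whole perturbation enters through $A^{-1}$. I would therefore start by writing, for $x,y\in\mathcal{B}$ with $(W_{1,x},W_{2,x})=\mathcal{W}_\Theta(x)$ and $(W_{1,y},W_{2,y})=\mathcal{W}_\Theta(y)$,
\[
\mathcal{T}_\theta(x)-\mathcal{T}_\theta(y)
\;=\;A(W_{1,x},W_{2,x})^{-1}\bigl[A(W_{1,y},W_{2,y})-A(W_{1,x},W_{2,x})\bigr]A(W_{1,y},W_{2,y})^{-1}b
\]
via the standard resolvent identity $A_x^{-1}-A_y^{-1}=A_x^{-1}(A_y-A_x)A_y^{-1}$, which is legitimate since Lemma~\ref{Alowerbound} guarantees invertibility of both $A_x$ and $A_y$ under the admissibility bounds on the weights.

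Next I would bound each of the three factors separately. For the outer factors, Lemma~\ref{Alowerbound} supplies $\|A_x^{-1}\|,\|A_y^{-1}\|\le 1/\alpha$, producing a $1/\alpha^2$ contribution. For the right-hand side, $\|b\|_2=\|S^\top f\|_2\le\|S\|\,\|f\|_2$. The middle factor is the interesting one: from the block structure of $A$ only the weight blocks differ, so
\[
A_y-A_x \;=\; \lambda_1\,G^\top(W_{1,y}-W_{1,x})G \;+\; \lambda_2\,R^\top(W_{2,y}-W_{2,x})R,
\]
and the triangle inequality together with submultiplicativity gives $\|A_y-A_x\|\le \lambda_1\|G\|^2\|W_{1,y}-W_{1,x}\|+\lambda_2\|R\|^2\|W_{2,y}-W_{2,x}\|$. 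Since each $W_i$ is diagonal, its spectral norm coincides with the $\ell_\infty$ norm of its diagonal entries $\omega(\cdot)$, so the hypothesis $\|\omega(x)-\omega(y)\|_\infty\le L_{\mathcal{W}}\|x-y\|_2$ converts each diagonal difference directly into $L_{\mathcal{W}}\|x-y\|_2$. Chaining the three bounds yields the claimed
\[
\|\mathcal{T}_\theta(x)-\mathcal{T}_\theta(y)\|_2 \;\le\; \frac{(\lambda_1\|G\|^2+\lambda_2\|R\|^2)\,L_{\mathcal{W}}\,\|S\|\,\|f\|_2}{\alpha^2}\,\|x-y\|_2.
\]

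The main obstacle I anticipate is the bookkeeping around the notation for $\mathcal{W}_\Theta$: the operator outputs a pair of diagonal matrices $(W_1,W_2)$, but the Lipschitz hypothesis is phrased with a single $\ell_\infty$ bound on the concatenated diagonals $\omega(x)$. I would make this explicit by defining $\omega(x)=(\mathrm{diag}(W_{1,x}),\mathrm{diag}(W_{2,x}))$ and noting $\|W_{i,y}-W_{i,x}\|=\|\mathrm{diag}(W_{i,y})-\mathrm{diag}(W_{i,x})\|_\infty\le\|\omega(x)-\omega(y)\|_\infty$, so that the single Lipschitz constant $L_{\mathcal{W}}$ simultaneously controls both diagonal perturbations. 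Everything else is a routine chain of sub-multiplicative norm inequalities, and no additional hypothesis beyond the admissibility bounds on the weights and the hypothesis $x,y\in\mathcal{B}$ (used only to guarantee the local Lipschitz estimate on $\mathcal{W}_\Theta$) is required.
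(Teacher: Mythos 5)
Your proposal is correct and takes essentially the same route as the paper: the resolvent identity $A_x^{-1}-A_y^{-1}=A_x^{-1}(A_y-A_x)A_y^{-1}$ applied to $b=S^\top f$ is exactly the content of the paper's auxiliary Lemma~\ref{lem:perturbapx}, and the subsequent norm bounds ($\|A^{-1}\|\le 1/\alpha$, $\|b\|_2\le\|S\|\,\|f\|_2$, the spectral/$\ell_\infty$ identification for the diagonal weight perturbations) match the paper's argument step for step. The only cosmetic difference is that the paper packages the perturbation identity as a separate lemma bounding $\|\mathcal{T}_\theta(x_2)\|_2$ rather than $\|A_y^{-1}b\|_2$ directly.
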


\begin{theorem}[Existence and contractive convergence]
\label{thm:exist_converge_main}
Let \(r := \|S\|\,\|f\|_2 / \alpha\). Then, \(\mathcal{T}_{\theta}(x)\) \eqref{fixpointmap} maps the closed ball \(\mathcal{B}=\{x:\|x\|_2\le r\}\) into itself. Moreover:
\begin{itemize}
  \item[(a)] (\emph{Existence}) \(\mathcal{T}_{\theta}(x)\) has at least one fixed point in \(\mathcal{B}\). 

  \item[(b)] (\emph{Uniqueness and convergence}) The computable quantity
$$\mathcal{Q} := \frac{(\lambda_1\|G\|^2+\lambda_2\|R\|^2)\,\kappa\,\|S\|\,\|f\|_2}{\alpha^2}$$
(with $\kappa$ from Proposition~\ref{prop:LW_bound}) satisfies $\mathcal{Q} < 1$ for the chosen set of normalization factors $c_i (i=1,\ldots,N)$. Under this condition, $\mathcal{T}_{\theta}(x)$ is a contraction on $\mathcal{B}$. In that case the iterates converge linearly to the unique fixed point \(x_\star\in\mathcal{B}\):
  \[
  \|x_{k}-x_\star\|_2 \le \mathcal{Q}^{k}\|x_0-x_\star\|_2.
  \]
\end{itemize}
\end{theorem}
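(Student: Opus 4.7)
The plan is to assemble the self-mapping, existence, and contraction statements from results already in place---Lemma \ref{Alowerbound}, Theorem \ref{thm:lipschitz_main}, and Proposition \ref{prop:LW_bound}---and then invoke the Brouwer and Banach fixed-point theorems in turn.

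First, I would verify the self-mapping property. Since $\mathcal{T}_{\theta}(x) = A(W_1,W_2)^{-1} S^\top f$ with $(W_1,W_2) = \mathcal{W}_{\Theta}(x)$ admissible, the operator-norm estimate \eqref{eq:AinvL} of Lemma \ref{Alowerbound} gives
\[
\|\mathcal{T}_{\theta}(x)\|_2 \;\le\; \|A(W_1,W_2)^{-1}\|\, \|S^\top f\|_2 \;\le\; \frac{\|S\|\,\|f\|_2}{\alpha} \;=\; r,
\]
which shows $\mathcal{T}_{\theta}(\mathcal{B}) \subseteq \mathcal{B}$ and also pins down the radius used throughout Theorem \ref{thm:lipschitz_main}.

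For part (a), Brouwer's fixed-point theorem applies directly: $\mathcal{B}$ is a nonempty, compact, convex subset of the finite-dimensional Euclidean space $\mathbb{R}^{n+2n}$, and $\mathcal{T}_{\theta}$ is continuous on $\mathcal{B}$, because $x \mapsto \mathcal{W}_{\Theta}(x)$ is Lipschitz by assumption, the admissibility bounds keep $A(W_1,W_2)$ uniformly coercive, and the inverse of a uniformly coercive symmetric operator depends continuously on its entries. Brouwer then produces at least one fixed point $x_\star \in \mathcal{B}$, with no extra assumption on $\mathcal{Q}$.

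For part (b), combining Theorem \ref{thm:lipschitz_main} with the bound $L_{\mathcal{W}} \le \kappa$ from Proposition \ref{prop:LW_bound} yields $L_{\mathcal{T}} \le \mathcal{Q}$. Choosing the spectral-normalization factors $c_i$ so that $\mathcal{Q}<1$ turns $\mathcal{T}_{\theta}$ into a strict contraction on the complete metric space $\mathcal{B}$ (closed subset of $\mathbb{R}^{n+2n}$). Banach's fixed-point theorem then delivers a unique fixed point $x_\star \in \mathcal{B}$, and iterating the Lipschitz inequality $\|x_k - x_\star\|_2 = \|\mathcal{T}_{\theta}(x_{k-1}) - \mathcal{T}_{\theta}(x_\star)\|_2 \le \mathcal{Q}\,\|x_{k-1} - x_\star\|_2$ gives the geometric bound $\|x_k - x_\star\|_2 \le \mathcal{Q}^k \|x_0 - x_\star\|_2$ by induction on $k$.

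The main obstacle I expect is a matter of consistency rather than of heavy new analysis: the Lipschitz estimate of Theorem \ref{thm:lipschitz_main} is stated on a ball of radius $r$, and one must verify that this is precisely the invariant ball produced by the self-mapping bound, so that the contraction estimate applies to every pair $(x,y)$ generated by the Picard iteration \eqref{eq:fp} starting from any $x_0 \in \mathcal{B}$. Once that bookkeeping is settled, the proof is a clean synthesis: Brouwer supplies existence unconditionally, while the computable constant $\mathcal{Q}$ upgrades it to uniqueness and linear convergence whenever the U-Net's real-spectral-normalization factors are chosen tightly enough.
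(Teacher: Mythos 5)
Your proposal is correct and follows essentially the same route as the paper's proof: the invariant-ball estimate via $\|A^{-1}\|\le 1/\alpha$ from Lemma~\ref{Alowerbound}, Brouwer's theorem for unconditional existence using continuity from Theorem~\ref{thm:lipschitz_main}, and Banach's theorem for uniqueness and linear convergence once $L_{\mathcal{T}}\le\mathcal{Q}<1$ is secured through Proposition~\ref{prop:LW_bound}. Your added remark about matching the radius of the Lipschitz ball to the invariant ball is a sensible bookkeeping check that the paper handles implicitly by defining both with the same $r$.
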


In the following proposition we give an explicit stability bound.

\begin{proposition}[Stability to data perturbation] Let \(x^\star_f\) denote the unique fixed point of operator \(\mathcal{T}_{\theta}(x)\) corresponding to the observed image \(f\). Then, for fixed admissible weights \((W_1,W_2)\) one has the explicit stability bound:
\begin{equation}
\|x^\star_{f_1} - x^\star_{f_2}\|_2 \le \frac{\|S\|}{\alpha}\,\|f_1-f_2\|_2.
\label{eq:stabb}
\end{equation}
\label{prop:stab}
\end{proposition}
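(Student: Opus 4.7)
The plan is to exploit the fact that the proposition fixes the weight matrices $(W_1,W_2)$ a priori, so the mapping $\mathcal{T}_\theta$ degenerates into an affine (actually linear, since the fixed-point equation is homogeneous in $f$) operator in $f$. Concretely, once $W_1,W_2$ are frozen, the fixed-point condition $x = \mathcal{T}_\theta(x)$ reduces via \eqref{eq:solve} to the single linear normal equation $A(W_1,W_2)\,x = S^\top f$, so the fixed point admits the closed-form expression $x^\star_f = A(W_1,W_2)^{-1} S^\top f$. This is the conceptual reduction that makes the stability bound essentially immediate.

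First I would invoke Proposition~\ref{prop1} to guarantee that $A(W_1,W_2)$ is symmetric positive definite under the admissibility bounds, so that $A(W_1,W_2)^{-1}$ exists and the closed form above is well defined for both $f_1$ and $f_2$. Then, by linearity, I would subtract to obtain
\begin{equation*}
x^\star_{f_1} - x^\star_{f_2} \;=\; A(W_1,W_2)^{-1} S^\top (f_1 - f_2),
\end{equation*}
and pass to the Euclidean norm using submultiplicativity,
\begin{equation*}
\|x^\star_{f_1} - x^\star_{f_2}\|_2 \;\le\; \|A(W_1,W_2)^{-1}\|\,\|S^\top\|\,\|f_1-f_2\|_2.
\end{equation*}

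Finally I would close the estimate by combining two ingredients already in hand: (i) the operator-norm bound $\|A(W_1,W_2)^{-1}\|\le 1/\alpha$ established in Lemma~\ref{Alowerbound} (equation \eqref{eq:AinvL}), which is precisely where the admissibility assumption $\omega_{\min} I \preceq W_i \preceq \omega_{\max} I$ enters, and (ii) the identity $\|S^\top\|=\|S\|$ for the spectral norm. Putting these together yields the claimed bound $\|x^\star_{f_1}-x^\star_{f_2}\|_2 \le (\|S\|/\alpha)\,\|f_1-f_2\|_2$.

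There is no real technical obstacle here: the only point worth flagging is the logical one, namely that the proposition assumes the weights are held fixed and thus sidesteps the data-dependence of $\mathcal{W}_\Theta$. If one instead wanted a stability bound for the full nonlinear map (in which $(W_1,W_2) = \mathcal{W}_\Theta(x)$ and hence depends implicitly on $f$ through $x^\star_f$), one would have to combine this linear estimate with the Lipschitz bound of Theorem~\ref{thm:lipschitz_main} inside a Banach-type perturbation argument under the contractivity condition $\mathcal{Q}<1$ of Theorem~\ref{thm:exist_converge_main}. But for the statement as posed, the three-line argument above is complete.
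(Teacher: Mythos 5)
Your argument is correct and is essentially identical to the paper's own proof: both observe that with the weights frozen the two fixed points solve the normal equations $A(W_1,W_2)x = S^\top f_i$, subtract to get $x^\star_{f_1}-x^\star_{f_2}=A(W_1,W_2)^{-1}S^\top(f_1-f_2)$, and conclude via $\|A(W_1,W_2)^{-1}\|\le 1/\alpha$ from Lemma~\ref{Alowerbound}. Your explicit mention of $\|S^\top\|=\|S\|$ and the closing remark about the fully nonlinear case are just slightly more detailed than the paper's terse write-up.
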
 
\begin{proof}
For fixed admissible weights \((W_1,W_2)\), the two fixed points $x^\star_{f_1}$ and $x^\star_{f_2}$ of operator  $\mathcal{T}_{\theta}(x)$ associated with two different observations $f_1$ and $f_2$ are clearly both solution of normal equations \eqref{eq:normalA_main} with $f = f_1$ or $f = f_2$, respectively. Subtracting, we obtain:
\[
x^\star_{f_1}-x^\star_{f_2} = A(W)^{-1} S^\top (f_1-f_2).
\]
Finally, taking norms we easily get the stability bound in \eqref{eq:stabb}.
\end{proof}

Thus the reconstruction is Lipschitz stable with constant \(\|S\|/\alpha\) with respect to perturbations in the measurements, reinforcing the robustness of the fixed-point formulation.

\section{Numerical Experiments}
\label{sec:experiments}
In this section, we present numerical experiments to demonstrate the effectiveness of our proposed method for image decomposition. We utilize a combination of synthetic and real-world datasets, enabling quantitative evaluations under controlled conditions and qualitative assessments of practical applicability.

\subsection{Experimental Setup}
\label{sec:exp_setup}
For the synthetic dataset, we adopt the generation procedure outlined in \cite{doi:10.1137/24M1677770}. Each synthetic observation $f$ is constructed as $f = c + t$, where the cartoon component $c$ comprises piecewise constant regions or smooth gradients, and the texture component $t$ is generated using periodic patterns or stochastic processes. This yields a dataset of 512 training images, each sized $128 \times 128$ pixels, accompanied by ground-truth decompositions for precise metric computation. In addition, several real-world natural images are employed for visual evaluation of the method's robustness in practical scenarios.

Our approach is compared against traditional baselines and several state-of-the-art methods: the total-variation and G-norm (TV-Gnorm) model \cite{wen2019primal}, the low-rank and weighted least-squares (LR-WLS) method \cite{li2025cartoon}, the deep unfolding Low Patch Rank network (LPR-Net) \cite{10.1007/978-3-031-92366-1_30}, and the plug-and-play joint structure-texture (Joint-PnP) scheme \cite{doi:10.1137/24M1677770}. For model-based methods, we tuned parameters for optimal performance. For Joint-PnP~\cite{doi:10.1137/24M1677770} and LPR-Net~\cite{10.1007/978-3-031-92366-1_30}, we use the official pretrained weights released by the respective authors. All three learning-based methods (including ours) are trained on synthetic data generated by the same procedure~\cite{doi:10.1137/24M1677770}. Our source code, pretrained weights, and training data are publicly available.\footnote{\url{https://github.com/yangli161029/NGVD}}

\begin{figure}[tbp]
\centering
\begingroup
\presetFive
\setlength{\tabcolsep}{1pt}
\renewcommand{\arraystretch}{1.0}

\begin{tabular}{@{}p{\colw} p{\colw} p{\colw} p{\colw} p{\colw}@{}}
  \multirow{2}{*}{\raisebox{-0.5\height}{\cellimg{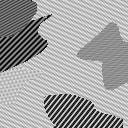}{}}} &
  \cellimgTR{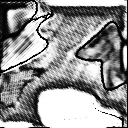}{\(W_1\)} &
  \cellimgTR{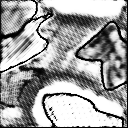}{\(W_1\)} &
  \cellimgTR{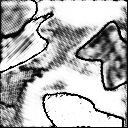}{\(W_1\)} &
  \cellimgTR{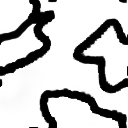}{\(W_1\) (prob.)} \\
  &
  \cellimgTR{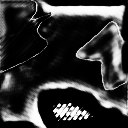}{\(W_2\)} &
  \cellimgTR{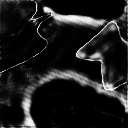}{\(W_2\)} &
  \cellimgTR{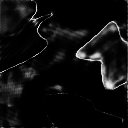}{\(W_2\)} &
  \cellimgTR{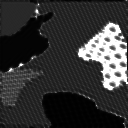}{\(W_2\)} \\
  \noalign{\vskip\dimexpr6\tabcolsep\relax} 

  \multirow{2}{*}{\raisebox{-0.5\height}{\cellimg{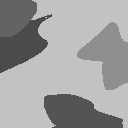}{}}} &
  \cellimgTR{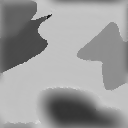}{PSNR: 23.45} &
  \cellimgTR{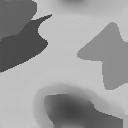}{PSNR: 25.59} &
  \cellimgTR{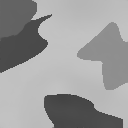}{PSNR: 31.71} &
  \cellimgTR{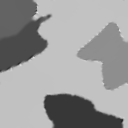}{PSNR: 26.69} \\
  &
  \cellimgTR{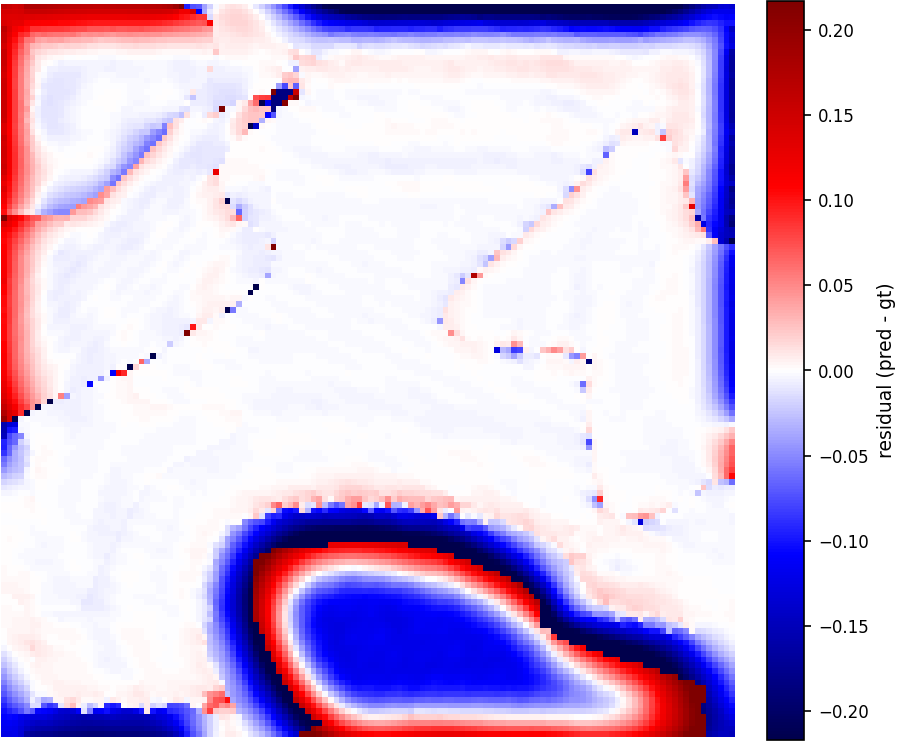}{RMSE: 0.07} &
  \cellimgTR{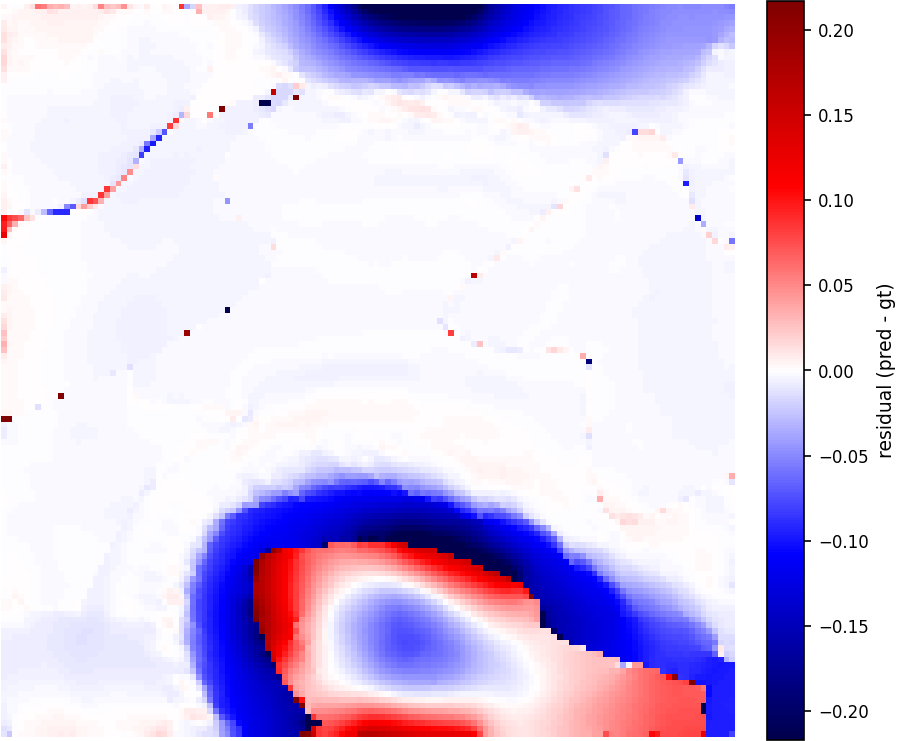}{RMSE: 0.05} &
  \cellimgTR{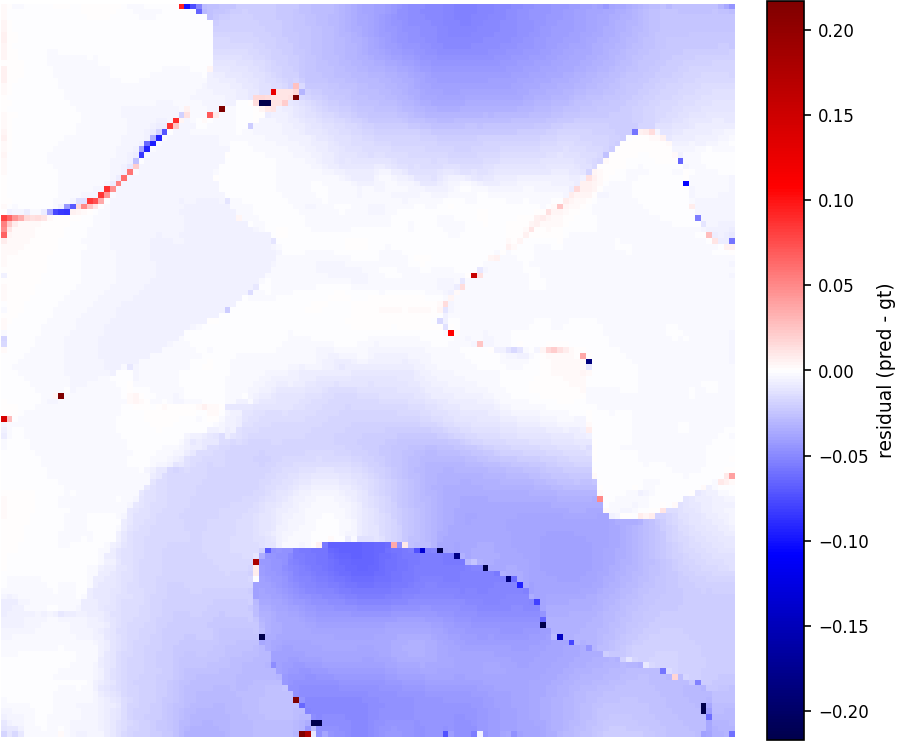}{RMSE: 0.03} &
  \cellimgTR{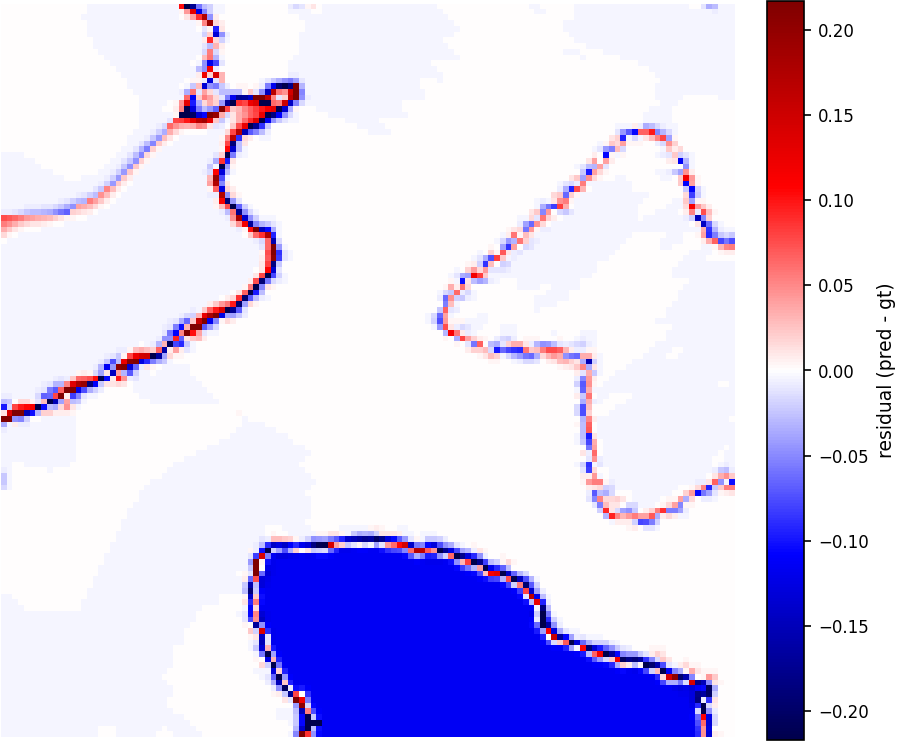}{RMSE: 0.05} \\
  \noalign{\vskip\dimexpr6\tabcolsep\relax} 

  \multirow{2}{*}{\raisebox{-0.5\height}{\cellimg{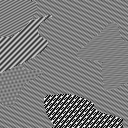}{}}} &
  \cellimgTR{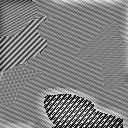}{PSNR: 23.48} &
  \cellimgTR{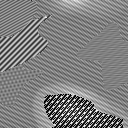}{PSNR: 25.60} &
  \cellimgTR{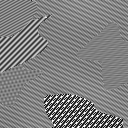}{PSNR: 31.71} &
  \cellimgTR{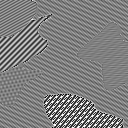}{PSNR: 26.95} \\
  &
  \cellimgTR{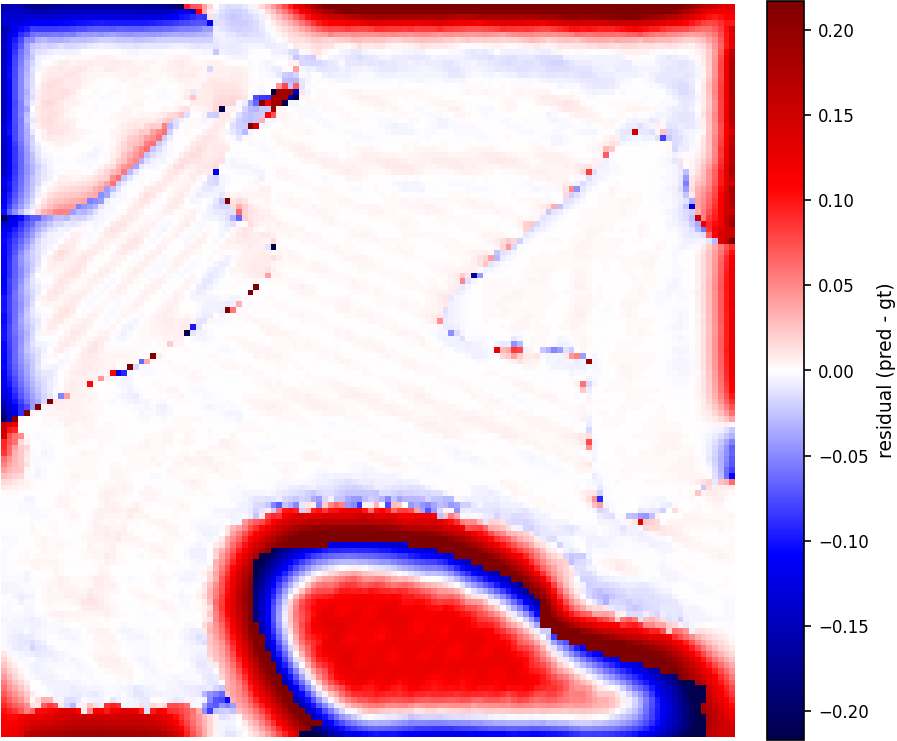}{RMSE: 0.07} &
  \cellimgTR{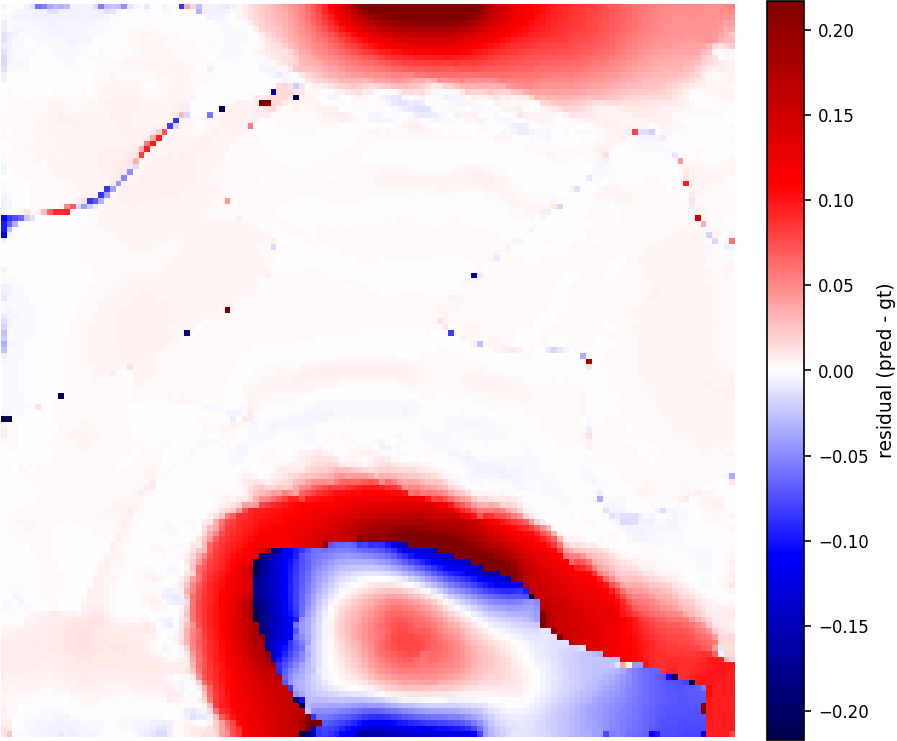}{RMSE: 0.05} &
  \cellimgTR{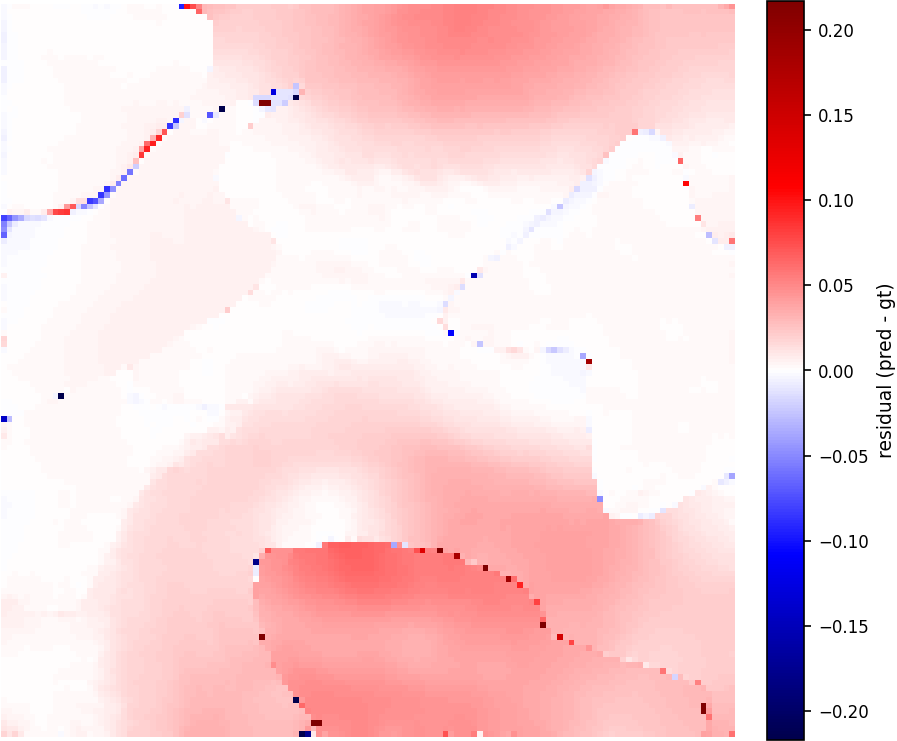}{RMSE: 0.03} &
  \cellimgTR{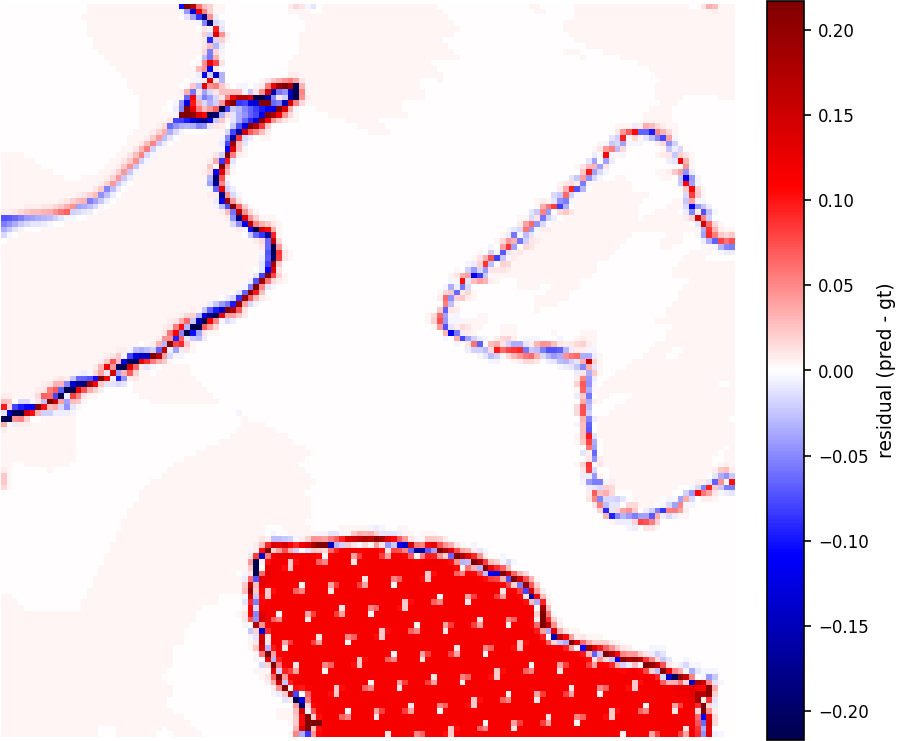}{RMSE: 0.05} \\
\end{tabular}

\vspace{6pt}

\begin{tabular}{@{}p{\colw} p{\colw} p{\colw} p{\colw} p{\colw}@{}}
  \centering \textsf{Reference} &
  \centering \textsf{\(k = 1\)} &
  \centering \textsf{\(k = 4\)} &
  \centering \textsf{\(k = 8\)} &
  \centering \textsf{Prob.} \\
\end{tabular}

\caption{Example~1 -- Iterative image decomposition for a synthetic image and probabilistic baseline. First column: observed image, ground-truth cartoon, and ground-truth texture (each spanning two rows). Columns two to four: results at iterations \(k = 1,4,8\) of the proposed method, showing from top to bottom \(W_1\), \(W_2\), reconstructed cartoon (with PSNR), cartoon residual (with RMSE), reconstructed texture (with PSNR), and texture residual (with RMSE). The last column reports the same quantities for the probabilistic baseline. }
\label{fig:iter-decomp}
\endgroup
\end{figure}

For our model, the regularization parameters are initialized as $\lambda_1 = 1$ and $\lambda_2 = 0.2$, with adaptive weight updates conducted over $K=8$ iterations unless otherwise specified.
The MLP $\Lambda_{\Theta_1}$ is initialized: the last
linear layer has zero weights and biases $\mathrm{softplus}^{-1}(\lambda_i^{(0)})$.
The U-Net $\mathcal{W}_{\Theta_2}$ uses Kaiming uniform initialization,
with the final sigmoid layer biased toward $0.5$.
The inner CG solver runs for at most $T=80$ steps per outer iteration
(warm-started from $\hat{x}_{k-1}$, tolerance $10^{-6}$).
Both subnetworks are optimized with Adam and a \texttt{ReduceLROnPlateau}
scheduler; training 600 epochs (batch size 24) on a single NVIDIA RTX~4090
takes approximately 2 hours.

Quantitative accuracy evaluations are based on the peak signal-to-noise ratio (PSNR), root mean squared error (RMSE), and structural similarity index measure (SSIM). Given the ground-truth cartoon component $c^\ast$ and its estimate $\hat{c}$ (similarly for the texture component, $t^\ast$ and $\hat{t}$), the RMSE and PSNR metrics are defined as 
$$\mathrm{RMSE}(c^\ast, \hat{c}) = \sqrt{\frac{1}{N} \sum_{i=1}^{N} (\hat{c}_i - c_i^\ast)^2}, \quad\;\:
\mathrm{PSNR}(c^\ast, \hat{c}) = 20 \log_{10} \left( \frac{1}{\mathrm{RMSE}(c^\ast, \hat{c})} \right),$$
where $ N $ denotes the number of pixels. Higher PSNR and lower RMSE values indicate superior reconstruction accuracy. The SSIM, which assesses perceptual quality, is computed as
$$ \mathrm{SSIM}(c^\ast, \hat{c}) = \frac{(2\mu_{c^\ast} \mu_{\hat{c}} + \epsilon_1)(2\sigma_{c^\ast \hat{c}} + \epsilon_2)}{(\mu_{c^\ast}^2 + \mu_{\hat{c}}^2 + \epsilon_1)(\sigma_{c^\ast}^2 + \sigma_{\hat{c}}^2 + \epsilon_2)}, $$
where $ \mu_c $ and $ \sigma_c $ represent the mean and (co)variance, respectively, and $ \epsilon_1, \epsilon_2 $ are small constants for stabilization. SSIM values closer to 1 reflect better preservation of structural information.

\subsection{Example 1: Iterative Scheme and Adaptive Weights}

This section validates the iterative scheme of the proposed method, with a focus on the adaptive weight updates that refine the decomposition of an observed image into cartoon and texture components. At each outer iteration, the algorithm first updates the weights \( W_1 \) and \( W_2 \), and then solves the associated quadratic problem for the cartoon--texture pair. The weights act as adaptive masks that separate structural and textural features. We analyze the progression across iterations, presenting visual and quantitative results at iterations \(k = 1, 4, 8\) for a representative synthetic image, and compare them with a probabilistic baseline that uses the same variational model but updates \(W_1\) and \(W_2\) by a probabilistic rule (see Section~\ref{sec:PGVD}).

Fig.~\ref{fig:iter-decomp} presents the decomposition results. The first column presents the observed image \(f\), ground-truth cartoon \(c^\star\), and ground-truth texture \(t^\star\) (each spanning two rows). The next three columns report the results of the proposed method at iterations \(k=1,4,8\), including learned weights \(W_1\) and \(W_2\), reconstructed cartoon and texture (with PSNR), and residuals (with RMSE). The last column reports the same quantities for the probabilistic baseline. All residual maps are visualized using a zero-centered diverging colorbar: saturated red/blue indicate larger positive/negative errors, while white corresponds to small residuals. 

In the first outer iteration, the decomposition remains coarse—region boundaries are not precisely located and high-frequency content leaks into the cartoon. This is reflected by the relatively low PSNR and more intense residual maps. As the iterations proceed, both the cartoon and texture components become cleaner and sharper, with residuals fading toward white. Quantitatively, the PSNR improves from 23.45/23.48 (cartoon/texture) at \(k=1\) to 31.71 for both components at \(k=8\), while RMSE drops from 0.07 to 0.03.

Compared with the learned-weight method, the probabilistic baseline yields globally consistent but less refined results. The cartoon boundaries are slightly blurred and fine-scale oscillatory textures are partially lost. These limitations are confirmed by the residual maps, where strong positive/negative deviations persist along edges and inside the high-frequency patch. In contrast, the learned-weight residuals in the same regions are nearly white. The PSNR of the probabilistic method (26.69/26.95 for cartoon/texture) improves upon the initial network iteration but remains clearly inferior to the final result.

To evaluate the impact of directional specificity, we conduct an ablation study comparing isotropic weights (\( W_x = W_y \) for both \( W_1 \) and \( W_2 \)) against an anisotropic variant (distinct weights in the \( x \) and \( y \) directions, described in the general model~\eqref{General_obj}). The anisotropic model is trained similarly, generating separate directional weight maps.

\begin{figure}[tbp]
\centering
\begingroup
  \presetThree

  \localset{0.131}{0.98}{0.88pt} 
  \renewcommand{\arraystretch}{0.9}

  \begin{tabular}{@{} p{\colw} p{\colw} p{\colw} p{\colw} p{\colw} p{\colw} p{\colw} @{}}

  \multirow{2}{*}{\raisebox{-0.5\height}{%
    \cellimgTR{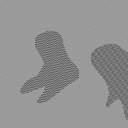}{Observed}}} &
  \cellimgTR{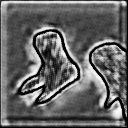}{Isotropic \(W_1\)} &
  \cellimgTR{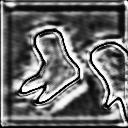}{\(W_{1,x}\)} &
  \cellimgTR{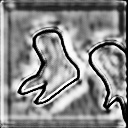}{\(W_{1,y}\)} &
  \cellimgTR{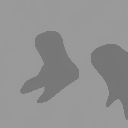}{SSIM 0.9987} &
  \cellimgTR{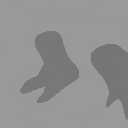}{SSIM 0.9982} &
  \cellimgTR{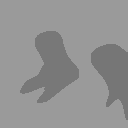}{} \\

  &
  \cellimgTR{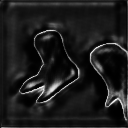}{Isotropic \(W_2\)} &
  \cellimgTR{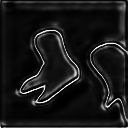}{\(W_{2,x}\)} &
  \cellimgTR{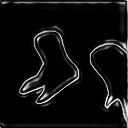}{\(W_{2,y}\)} &
  \cellimgTR{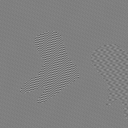}{SSIM 0.9669} &
  \cellimgTR{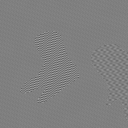}{SSIM 0.9657} &
  \cellimgTR{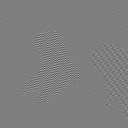}{} \\

  \noalign{\vskip\dimexpr2\tabcolsep\relax}

  \multicolumn{4}{@{}c@{}}{} &
  \centering {\fontsize{8.0}{8.2}\selectfont\textsf{\mbox{Isotropic}}} &
  \centering {\fontsize{8.0}{8.2}\selectfont\textsf{\mbox{Anisotropic}}} &
  \centering {\fontsize{8.0}{8.2}\selectfont\textsf{\mbox{Reference}}} \\
  
  \end{tabular}

  \caption{Example 1- Comparison of isotropic vs.\ anisotropic weights. Left: observed and weight maps (isotropic \(W_1/W_2\); anisotropic split into \(x/y\)). Right: reconstructed cartoon/texture with overlaid SSIM.}
  \label{fig:aniso-comp}
\endgroup
\end{figure}

\begin{figure}[tbp]
\centering
\begingroup
  \presetSeven

  \setlength{\tabcolsep}{0.9pt}

  \newcommand{\rowsepeqcol}{\noalign{\vskip\dimexpr2\tabcolsep\relax}}

  \begin{tabular}{@{} p{\colw} p{\colw} p{\colw} p{\colw} p{\colw} p{\colw} p{\colw} p{\colw} @{}}
  \multirow{2}{*}{\raisebox{-0.5\height}{\cellimg{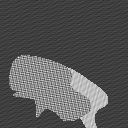}{}}} &
  \cellimgTR{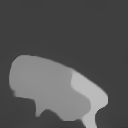}{PSNR: 32.87} &
  \cellimgTR{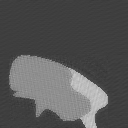}{PSNR: 34.08} &
  \cellimgTR{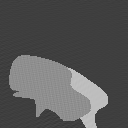}{PSNR: 39.79} &
  \cellimgTR{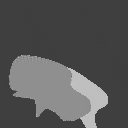}{PSNR: 41.10} &
  \cellimgTR{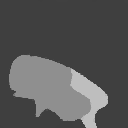}{PSNR: 37.65} &
  \cellimgTR{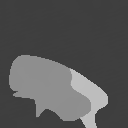}{PSNR: 44.16} &
  \cellimgTR{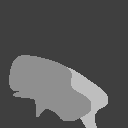}{} \\
  &
  \cellimgTR{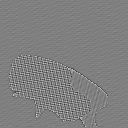}{PSNR: 32.68} &
  \cellimgTR{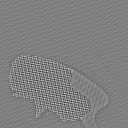}{PSNR: 34.02} &
  \cellimgTR{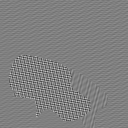}{PSNR: 39.64} &
  \cellimgTR{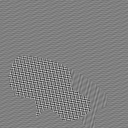}{PSNR: 40.89} &
  \cellimgTR{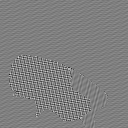}{PSNR: 37.65} &
  \cellimgTR{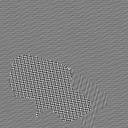}{PSNR: 44.16} &
  \cellimgTR{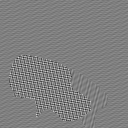}{} \\
  \rowsepeqcol
  \end{tabular}

  \begin{tabular}{@{} p{\colw} p{\colw} p{\colw} p{\colw} p{\colw} p{\colw} p{\colw} p{\colw} @{}}
  \multirow{2}{*}{\raisebox{-0.5\height}{\cellimg{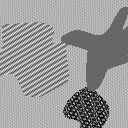}{}}} &
  \cellimgTR{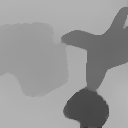}{PSNR: 9.26} &
  \cellimgTR{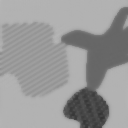}{PSNR: 29.14} &
  \cellimgTR{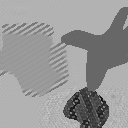}{PSNR: 26.81} &
  \cellimgTR{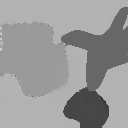}{PSNR: 32.14} &
  \cellimgTR{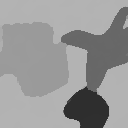}{PSNR: 34.31} &
  \cellimgTR{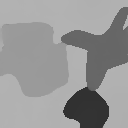}{PSNR: 39.22} &
  \cellimgTR{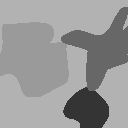}{} \\
  &
  \cellimgTR{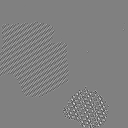}{PSNR: 21.84} &
  \cellimgTR{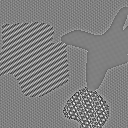}{PSNR: 23.73} &
  \cellimgTR{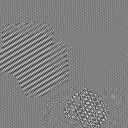}{PSNR: 23.38} &
  \cellimgTR{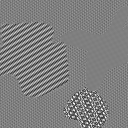}{PSNR: 24.15} &
  \cellimgTR{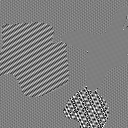}{PSNR: 34.31} &
  \cellimgTR{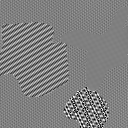}{PSNR: 39.22} &
  \cellimgTR{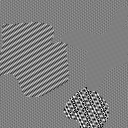}{} \\
  \rowsepeqcol
  \end{tabular}

  \begin{tabular}{@{} p{\colw} p{\colw} p{\colw} p{\colw} p{\colw} p{\colw} p{\colw} p{\colw} @{}}
  \multirow{2}{*}{\raisebox{-0.5\height}{\cellimg{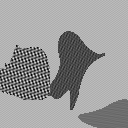}{}}} &
  \cellimgTR{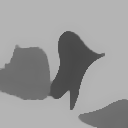}{PSNR: 8.75} &
  \cellimgTR{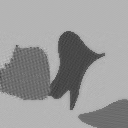}{PSNR: 33.00} &
  \cellimgTR{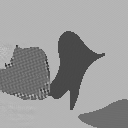}{PSNR: 31.89} &
  \cellimgTR{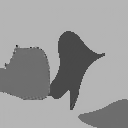}{PSNR: 38.40} &
  \cellimgTR{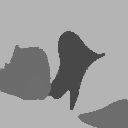}{PSNR: 37.55} &
  \cellimgTR{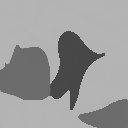}{PSNR: 41.33} &
  \cellimgTR{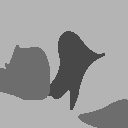}{} \\
  &
  \cellimgTR{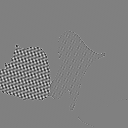}{PSNR: 20.44} &
  \cellimgTR{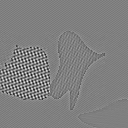}{PSNR: 31.04} &
  \cellimgTR{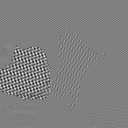}{PSNR: 30.21} &
  \cellimgTR{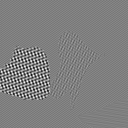}{PSNR: 33.27} &
  \cellimgTR{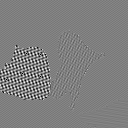}{PSNR: 37.55} &
  \cellimgTR{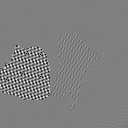}{PSNR: 41.32} &
  \cellimgTR{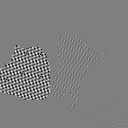}{} \\
  \end{tabular}

  \vspace{6pt}

\begin{tabular}{@{} p{\colw} p{\colw} p{\colw} p{\colw} p{\colw} p{\colw} p{\colw} p{\colw} @{}}
  & \centering {\fontsize{7.0}{8.2}\selectfont\textsf{\mbox{TV-Gnorm~\cite{wen2019primal}}}} %
  & \centering {\fontsize{7.0}{8.2}\selectfont\textsf{\mbox{LR-WLS~\cite{li2025cartoon}}}} %
  & \centering {\fontsize{7.0}{8.2}\selectfont\textsf{\mbox{Joint-PnP~\cite{doi:10.1137/24M1677770}}}} %
  & \centering {\fontsize{7.0}{8.2}\selectfont\textsf{\mbox{LPR-Net~\cite{10.1007/978-3-031-92366-1_30}}}} %
  & \centering {\fontsize{7.0}{8.2}\selectfont\textsf{\mbox{Ours(PGVD)}}} %
  & \centering {\fontsize{7.0}{8.2}\selectfont\textsf{\mbox{Ours(NGVD)}}} %
  & \centering {\fontsize{7.0}{8.2}\selectfont\textsf{\mbox{GT}}} \\
\end{tabular}

  \caption{Example~2 - Image decomposition results. Left column: observed input (spanning two rows). Top row per sample: reconstructed cartoon; bottom row: reconstructed texture. PSNR values overlaid at higher-left of each method result (ground truth column intentionally has no PSNR).}
  \label{fig:decomp-grid}
\endgroup
\end{figure}

Fig.~\ref{fig:aniso-comp} compares the weight maps and final reconstructions, with metrics overlaid. Quantitative results on the synthetic dataset show negligible differences: the isotropic model achieves SSIM values of 0.9987 (cartoon) and 0.9669 (texture), while the anisotropic model yields 0.9982 and 0.9657, respectively (differences \(< 0.002\)). Visual inspections reveal nearly identical reconstructions, with the anisotropic weights displaying subtle directional variations but no significant improvements. Given the absence of strong orientational biases in the datasets, we adopt isotropic weights for simplicity in subsequent experiments.

\subsection{Example~2: Comparison with State-of-the-Art Methods}

In this subsection, we compare our method against the baselines on both synthetic and real-world datasets. For synthetic data, quantitative metrics are computed using the ground truth, whereas real-world evaluations rely on visual inspections. For the model-based baselines on all
real-world images, TV-Gnorm uses $(\alpha,\lambda)=(20,0.07)$ following
the notation of~\cite{wen2019primal}, and LR-WLS uses
$(\tau,\beta)=(1.07,10^{-3})$ following
~\cite{li2025cartoon}.

Fig.~\ref{fig:decomp-grid} presents decomposition results for three representative synthetic samples. Each block displays the observed input (left, spanning two rows), followed by the reconstructed cartoon (top) and texture (bottom) for TV-Gnorm \cite{wen2019primal}, LR-WLS \cite{li2025cartoon}, Joint-PnP \cite{doi:10.1137/24M1677770}, LPR-Net \cite{10.1007/978-3-031-92366-1_30}, our method, and the ground truth. PSNR values are overlaid on the method outputs.

Our approach consistently outperforms the competitors across all synthetic samples, achie-ving the highest PSNR values and demonstrating superior separation of structural and textural components, with reconstructions closely approximating the ground truth. In contrast, TV-Gnorm yields the lowest PSNR, primarily due to over-smoothing that blurs edges in cartoons and textures. LR-WLS and Joint-PnP improve modestly  but shows boundary fuzziness in cartoons and incomplete texture isolation, allowing textural details to bleed into structural parts.  LPR-Net performs better by achieving basic separation, yet it struggles with fine-grained details at edges, resulting in artifacts such as residual patterns in cartoons or incomplete texture capture, particularly in complex patterns. Our method's adaptive weighting ensures cartoons with sharp contours and well-isolated textures, free of boundary artifacts, across varied synthetic scenarios.

\begin{figure}[tbp]
\centering
\begingroup
  \presetSix
  \begin{tabular}{@{} p{\colw} p{\colw} p{\colw} p{\colw} p{\colw} p{\colw} @{}}
  \multirow{2}{*}{\raisebox{-0.5\height}{\cellimg{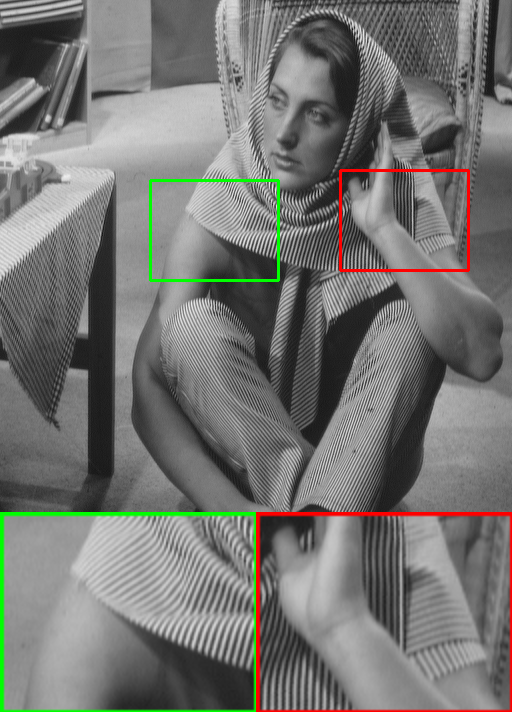}{}}} &
  \cellimg{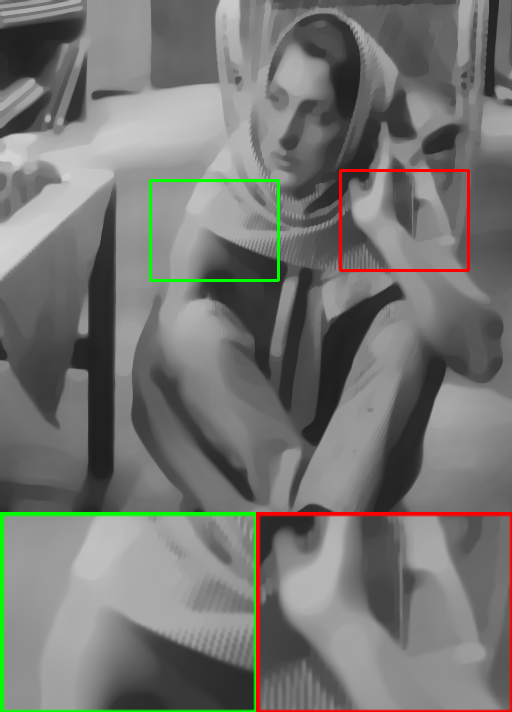}{} &
  \cellimg{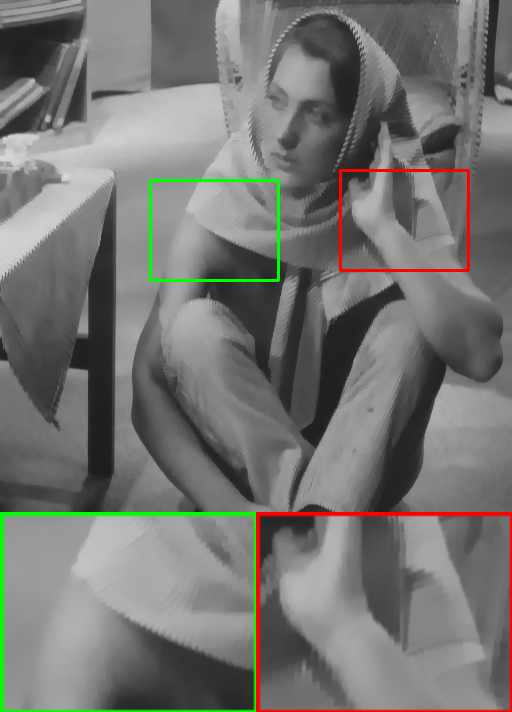}{} &
  \cellimg{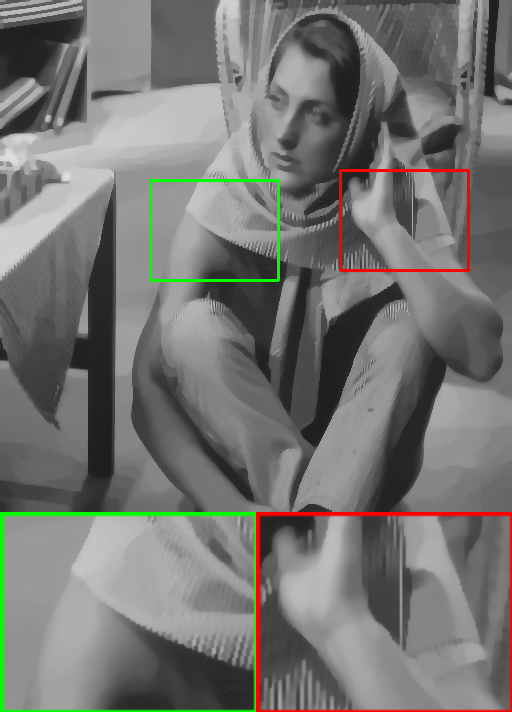}{} &
  \cellimg{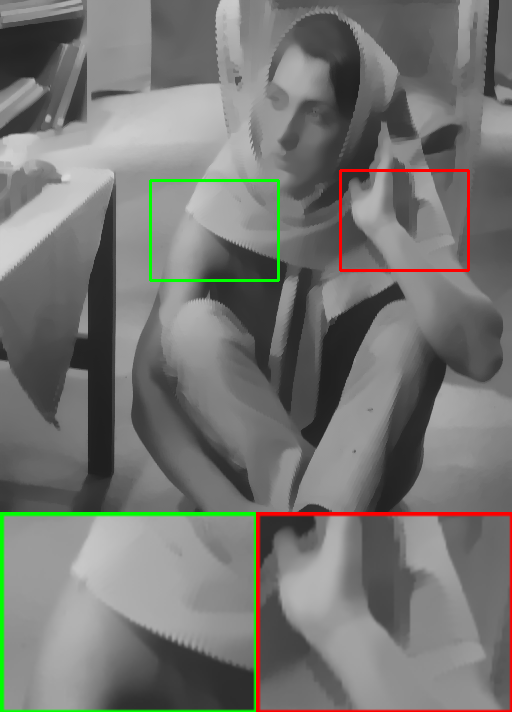}{} &
  \cellimg{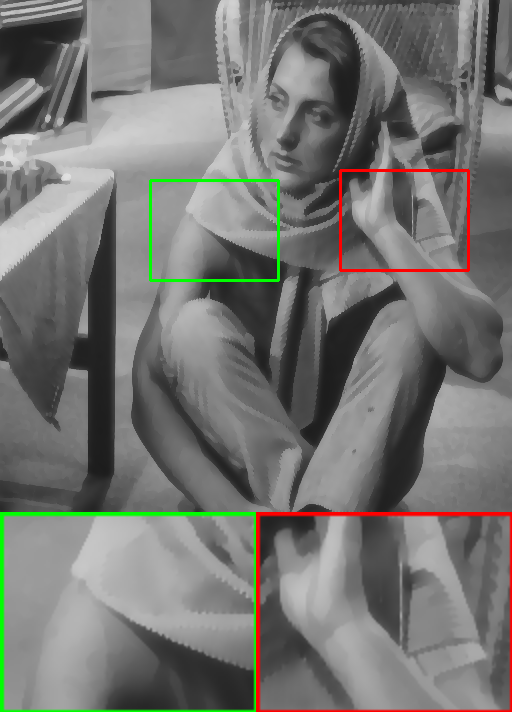}{} \\
  &
  \cellimg{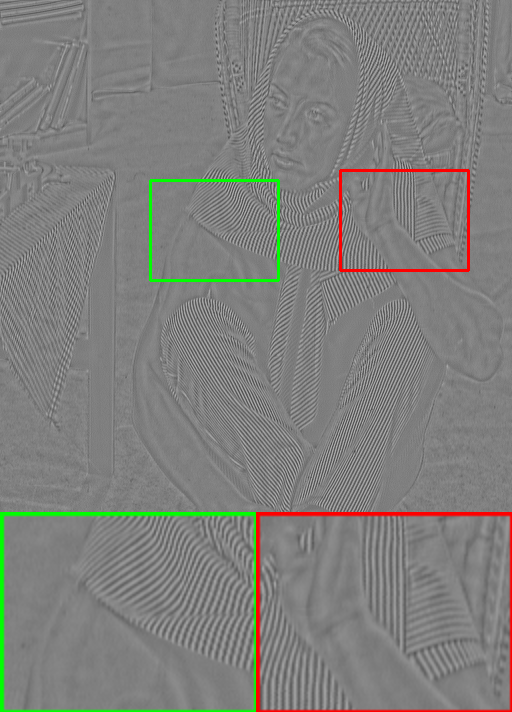}{} &
  \cellimg{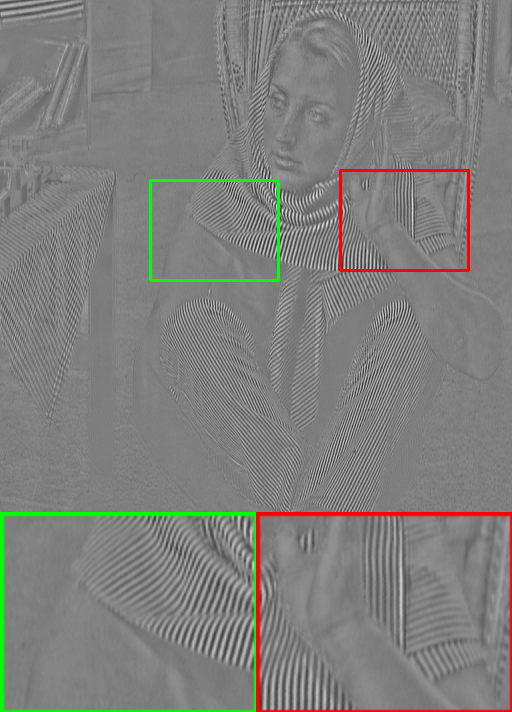}{} &
  \cellimg{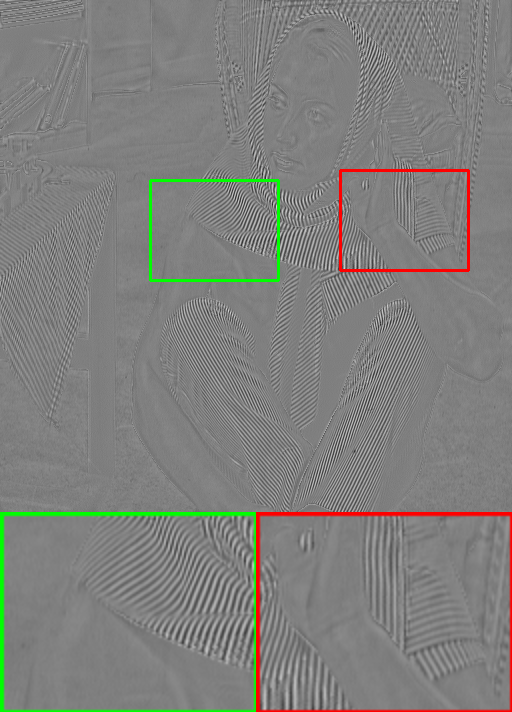}{} &
  \cellimg{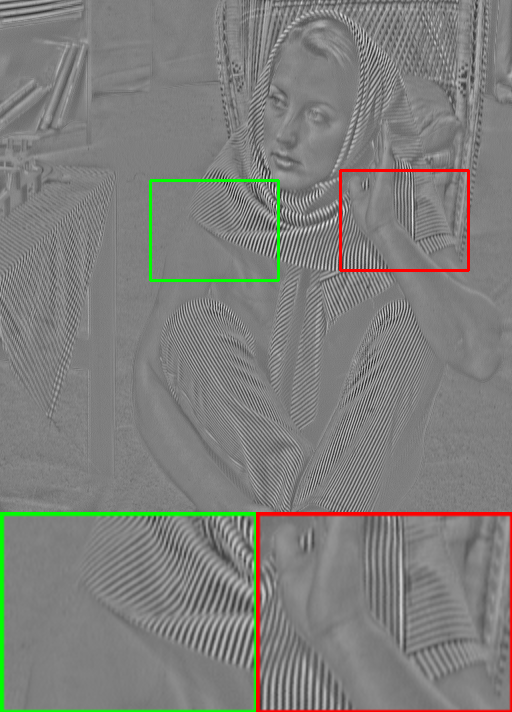}{} &
  \cellimg{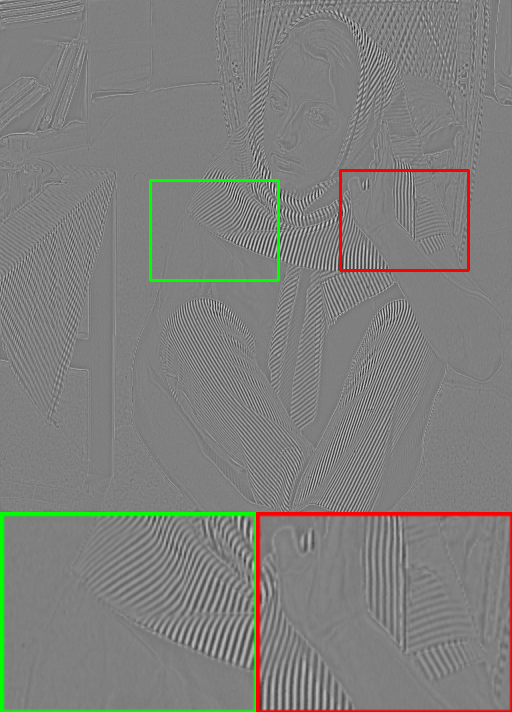}{} \\
  \end{tabular}

  \vspace{4pt} 

  \begin{tabular}{@{} p{\colw} p{\colw} p{\colw} p{\colw} p{\colw} p{\colw} @{}}
  \multirow{2}{*}{\raisebox{-0.5\height}{\cellimg{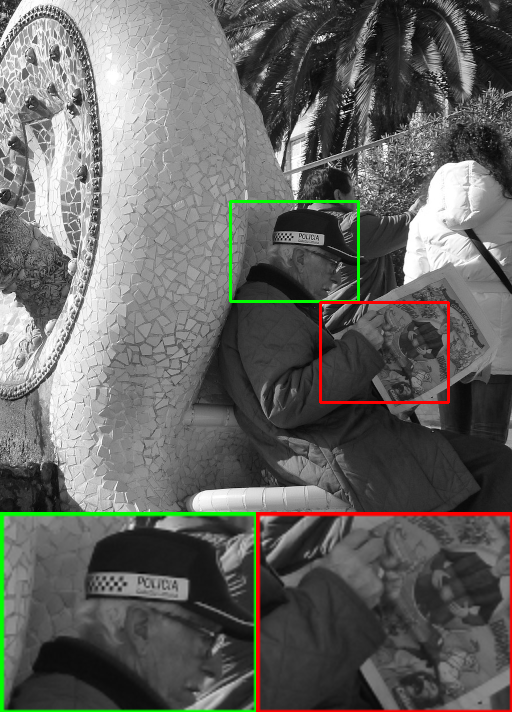}{}}} &
  \cellimg{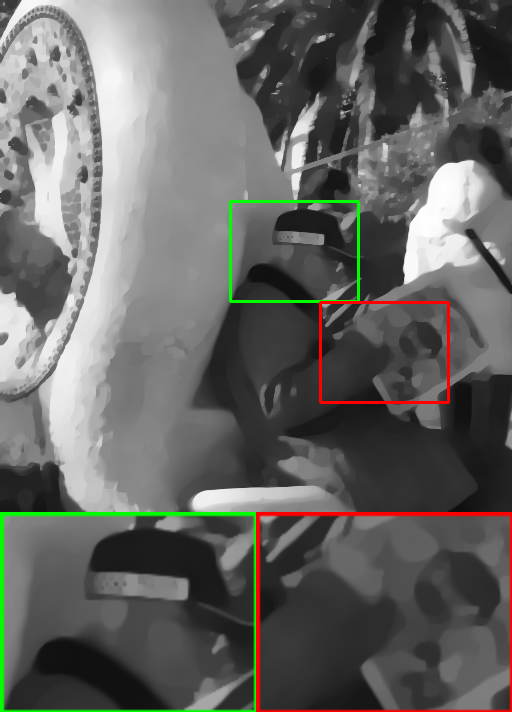}{} &
  \cellimg{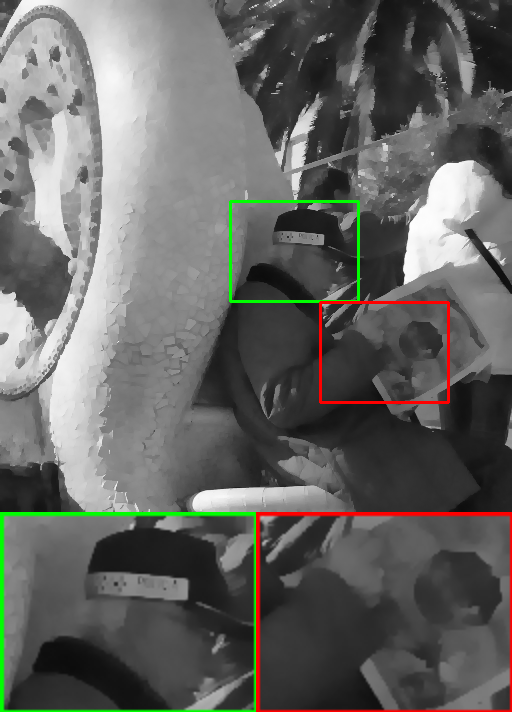}{} &
  \cellimg{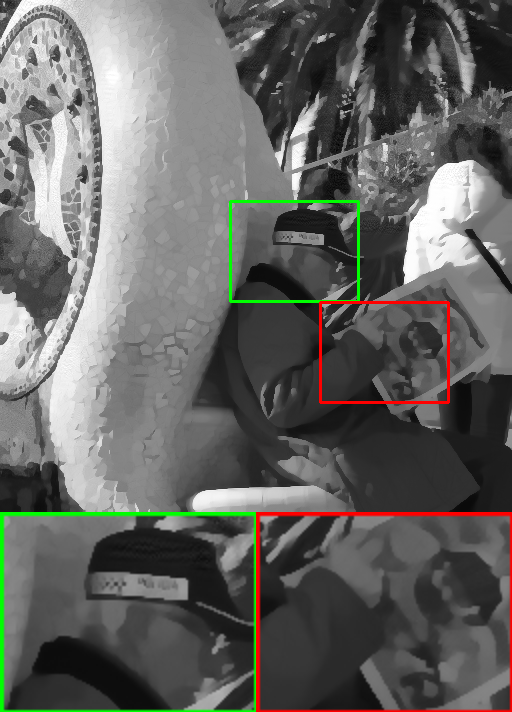}{} &
  \cellimg{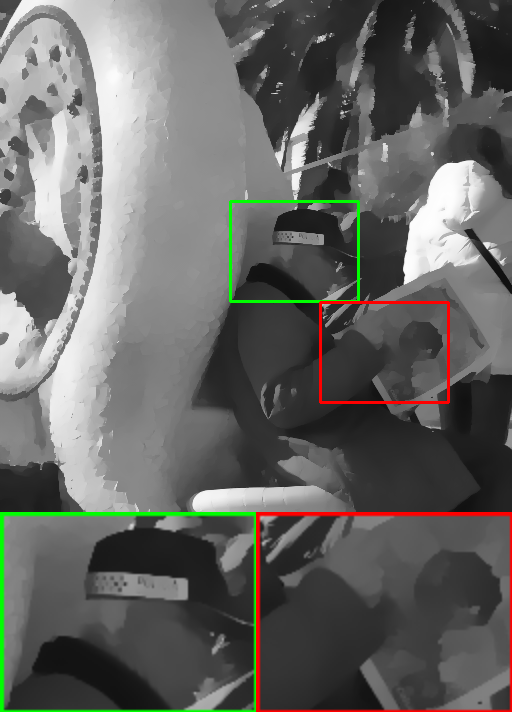}{} &
  \cellimg{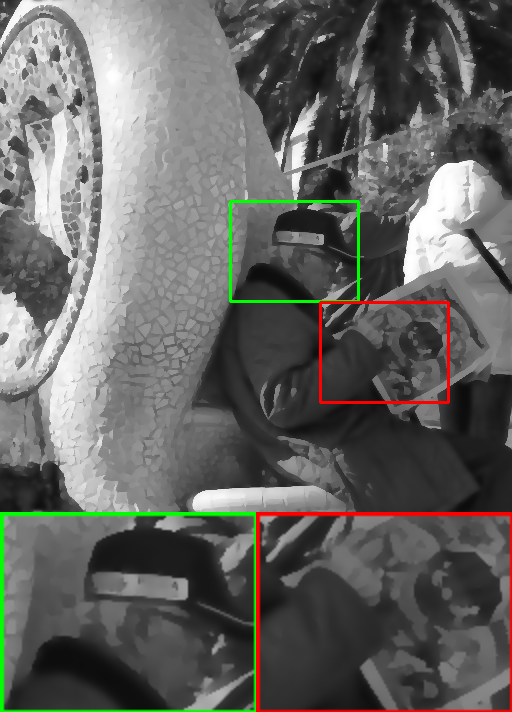}{} \\
  &
  \cellimg{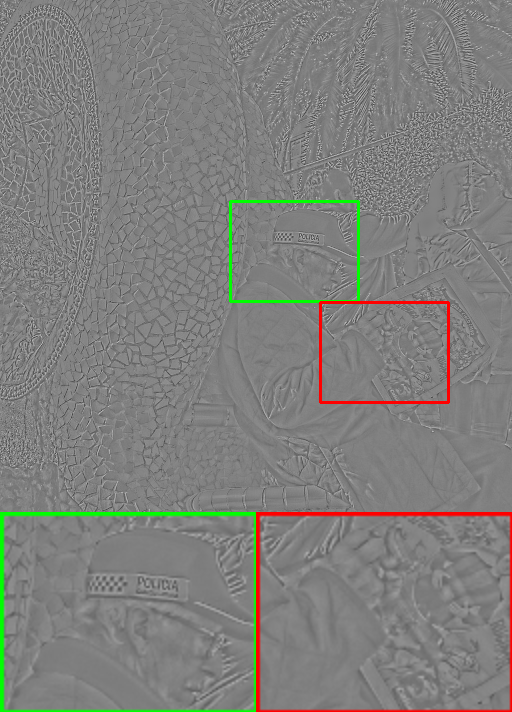}{} &
  \cellimg{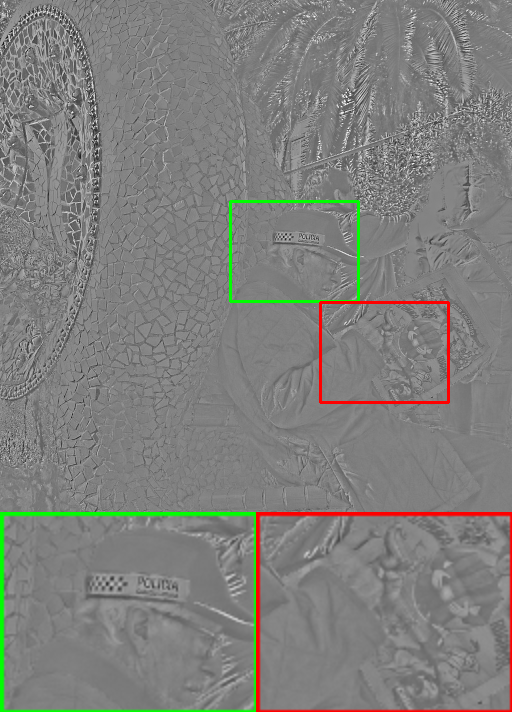}{} &
  \cellimg{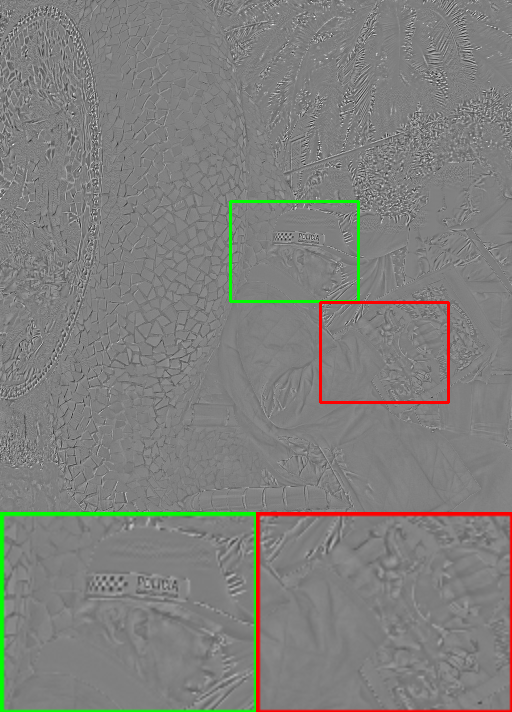}{} &
  \cellimg{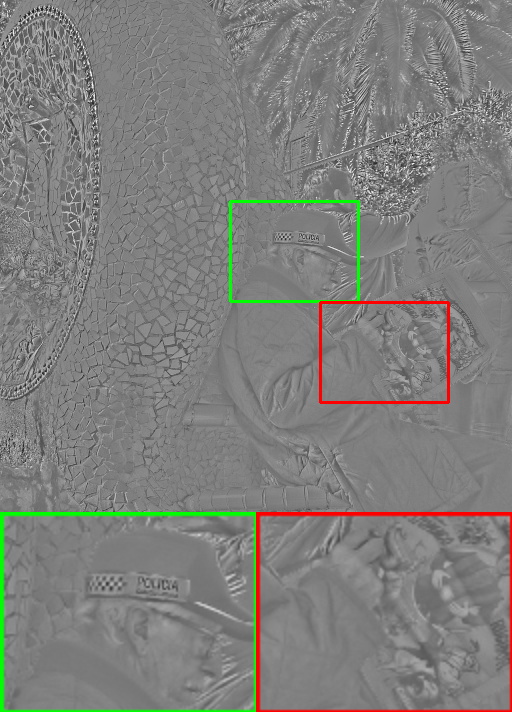}{} &
  \cellimg{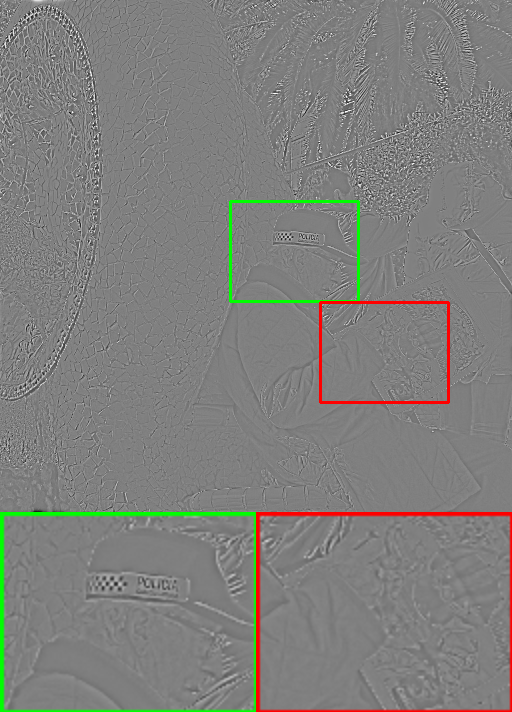}{} \\
  \end{tabular}

  \vspace{6pt}

  \begin{tabular}{@{} p{\colw} p{\colw} p{\colw} p{\colw} p{\colw} p{\colw} @{}}
  & \centering {\fontsize{7.0}{8.2}\selectfont\textsf{\mbox{TV-Gnorm \cite{wen2019primal}}}} &
  \centering {\fontsize{7.0}{8.2}\selectfont\textsf{\mbox{LR-WLS \cite{li2025cartoon}}}} &
  \centering {\fontsize{7.0}{8.2}\selectfont\textsf{\mbox{Joint-PnP \cite{doi:10.1137/24M1677770}}}} &
  \centering {\fontsize{7.0}{8.2}\selectfont\textsf{\mbox{LPR-Net \cite{10.1007/978-3-031-92366-1_30}}}} &
  \centering {\fontsize{7.0}{8.2}\selectfont\textsf{\mbox{Ours(NGVD)}}} \\
  \end{tabular}

  \caption{Example~2 - Real-world image decomposition results. Left column: observed input (spanning two rows, with zoomed regions integrated below). Top row per sample: cartoon component; bottom row: texture component. }
  \label{fig:natural_results}
\endgroup
\end{figure}

To complement the per-image comparisons, Table~\ref{tab:full_comparison} reports average PSNR, RMSE, and SSIM over the entire 180-image synthetic test set.  For the two model-based methods, we tuned parameters to maximize average PSNR on this dataset: TV-Gnorm uses $(\alpha, \lambda) = (8, 10^{-3})$ following the notation of~\cite{wen2019primal}, and LR-WLS uses $(\tau, \beta) = (1.05,\, 3{\times}10^{-2})$ following~\cite{li2025cartoon}. For Joint-PnP~\cite{doi:10.1137/24M1677770} and LPR-Net~\cite{10.1007/978-3-031-92366-1_30}, we use the official pretrained models released by the respective authors; all three learning-based methods (including ours) are trained on synthetic data generated by the same procedure~\cite{doi:10.1137/24M1677770}.

\begin{table}[tbp]
\centering
\small
\setlength{\tabcolsep}{3.5pt}
\renewcommand{\arraystretch}{1.05}
\begin{tabular}{lccc|ccc}
\hline
\multirow{2}{*}{Method} & \multicolumn{3}{c|}{Cartoon} & \multicolumn{3}{c}{Texture} \\
\cline{2-7}
& PSNR $\uparrow$ & RMSE $\downarrow$ & SSIM $\uparrow$
& PSNR $\uparrow$ & RMSE $\downarrow$ & SSIM $\uparrow$ \\
\hline
TV-Gnorm \cite{wen2019primal}
 & 34.969 & 0.022 & 0.949
 & 21.008 & 0.095 & 0.264 \\
LR-WLS \cite{li2025cartoon}
 & 34.287 & 0.023 & 0.941
 & 32.187 & 0.034 & 0.858 \\
Joint-PnP \cite{doi:10.1137/24M1677770}
 & 34.810 & 0.026 & 0.934
 & 33.163 & 0.035 & 0.914 \\
LPR-Net \cite{10.1007/978-3-031-92366-1_30}
 & 38.463 & 0.015 & 0.986
 & 35.727 & 0.028 & 0.950 \\
Ours (NGVD)
 & \textbf{41.969} & \textbf{0.010} & \textbf{0.993}
 & \textbf{41.967} & \textbf{0.010} & \textbf{0.952} \\
\hline
\end{tabular}
\caption{Average PSNR~(dB), RMSE, and SSIM on the 180-image synthetic test dataset. Bold values denote the best results.}
\label{tab:full_comparison}
\end{table}

NGVD outperforms all baselines by a substantial margin.  Compared with the second-best method (LPR-Net), it improves average PSNR by 3.5~dB for cartoon and 6.2~dB for texture. TV-Gnorm achieves reasonable cartoon quality but severely under-separates texture (PSNR~21.0~dB), while LR-WLS provides a more balanced but still inferior decomposition.

For real-world natural images, we assess the proposed method on four challenging examples: Barbara,  Barcelona, Bee, and Bordeaux. Without ground-truth, evaluations are based on qualitative visual comparisons.

Figures~\ref{fig:natural_results} and~\ref{fig:natural_results2}
illustrate the results. Each subfigure includes the decomposed component
(cartoon or texture) with enlarged views of selected regions appended
below, enabling detailed comparisons of edge preservation, texture
isolation, and artifact reduction.
Our approach preserves sharp contours and smooth lines in cartoons---such
as clear facial outlines, building edges, and object shapes---while
accurately isolating fine-scale patterns like fabric weaves, soft bokeh, and biological fibers in textures. TV-Gnorm consistently
over-smooths, leading to blurred boundaries and diluted textures.
Joint-PnP misallocates textures to cartoons and causes edge artifacts.
LR-WLS retains residual textures at boundaries. LPR-Net achieves basic
separation but exhibits fuzzy edges and incomplete texture capture for
complex patterns. These observations are consistent across all four
images, confirming the robustness of the adaptive weighting mechanism to
diverse real-world content.

\begin{figure}[tbp]
\centering
\begingroup
  \presetSix
  \begin{tabular}{@{} p{\colw} p{\colw} p{\colw} p{\colw} p{\colw} p{\colw} @{}}
  \multirow{2}{*}{\raisebox{-0.5\height}{\cellimg{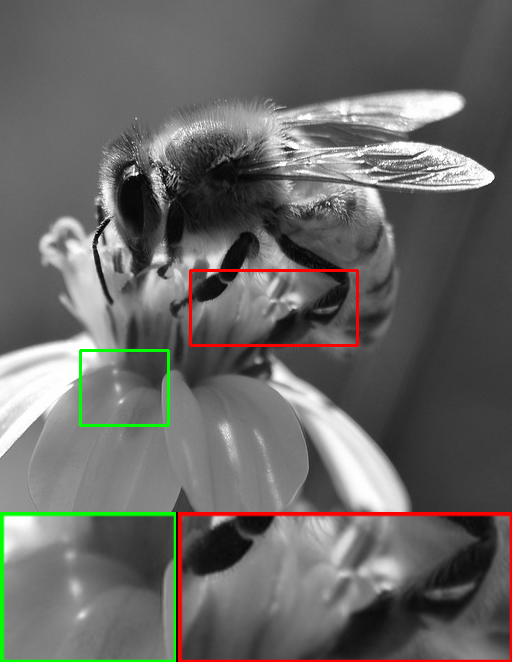}{}}} &
  \cellimg{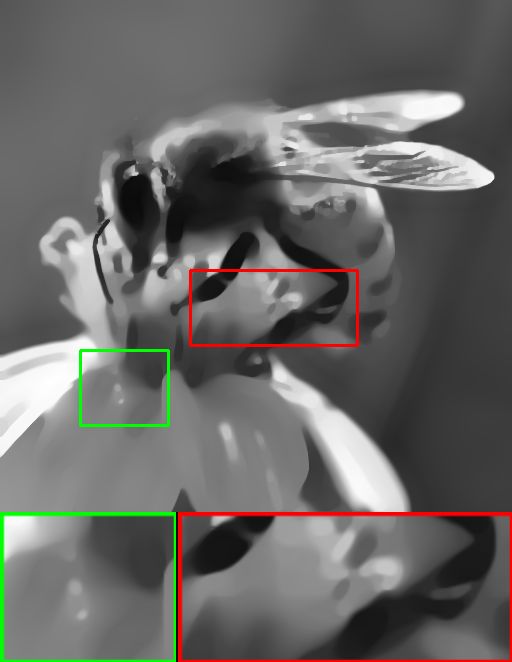}{} &
  \cellimg{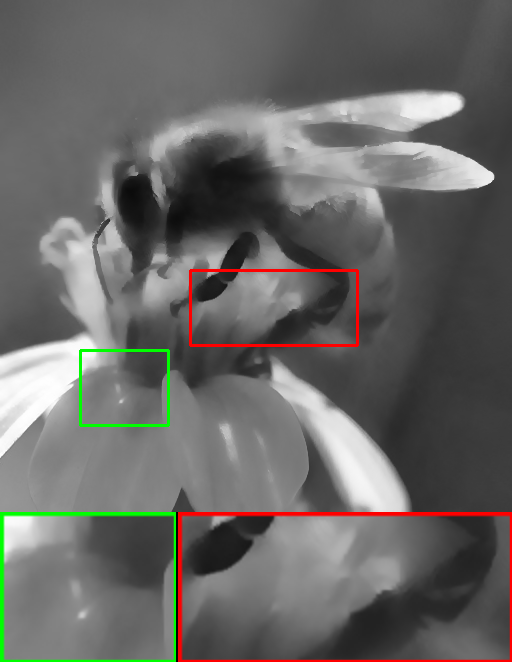}{} &
  \cellimg{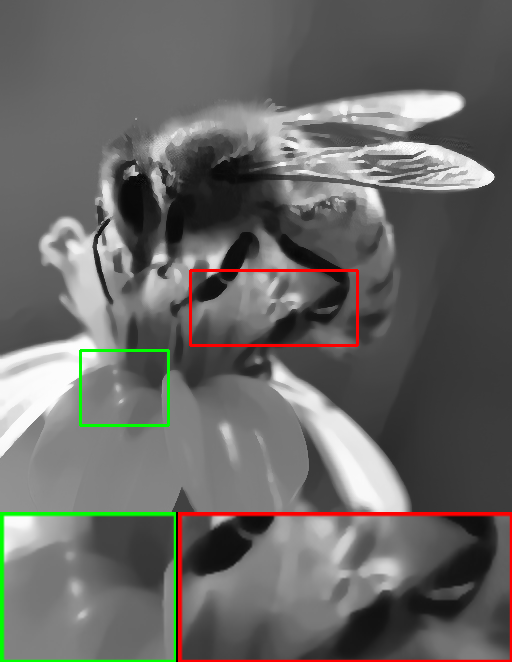}{} &
  \cellimg{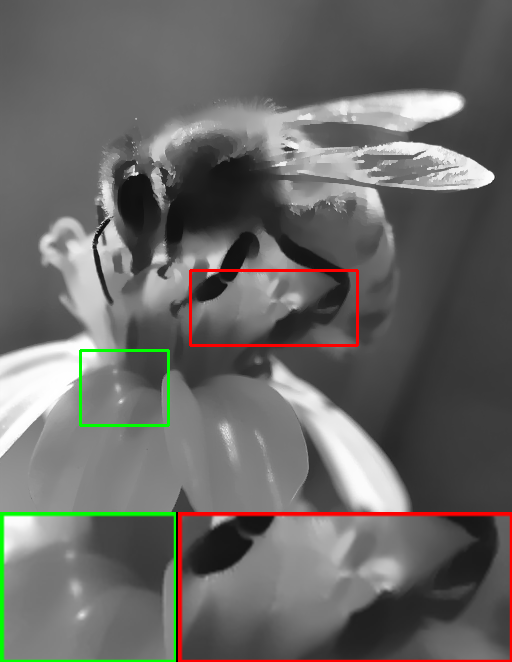}{} &
  \cellimg{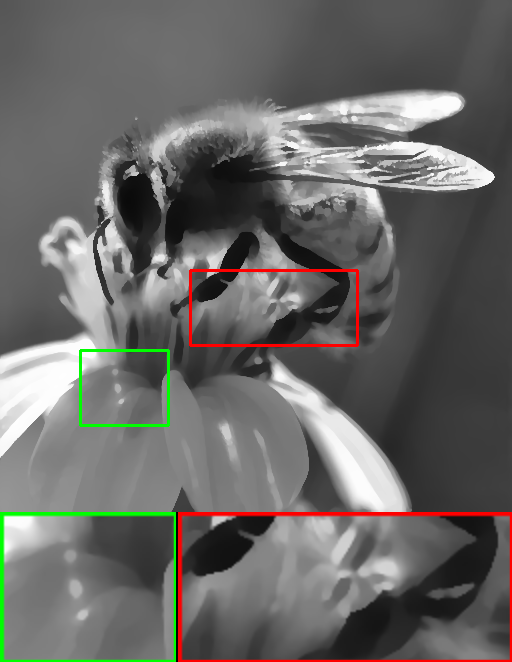}{} \\
  &
  \cellimg{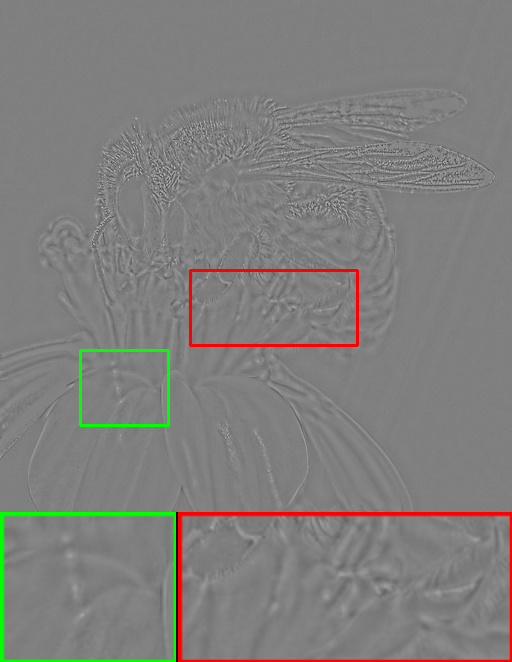}{} &
  \cellimg{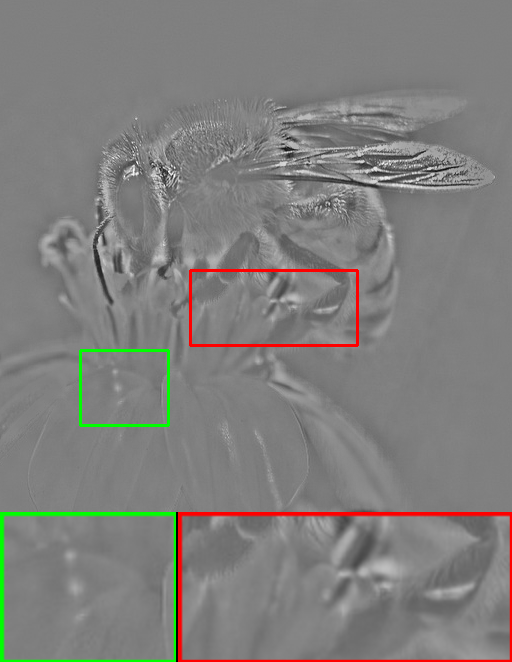}{} &
  \cellimg{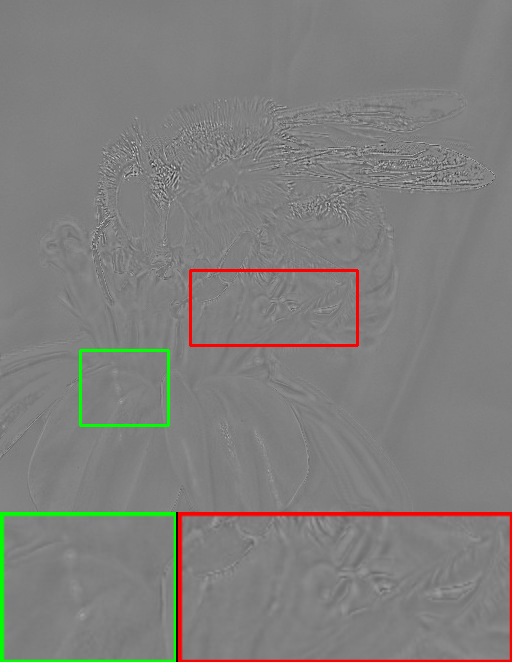}{} &
  \cellimg{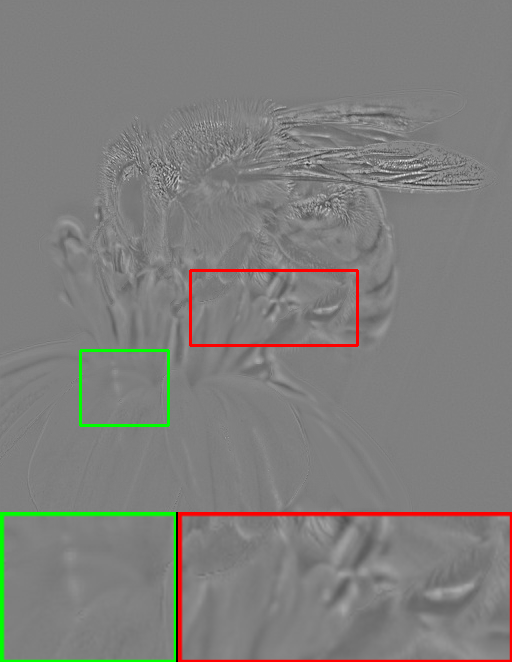}{} &
  \cellimg{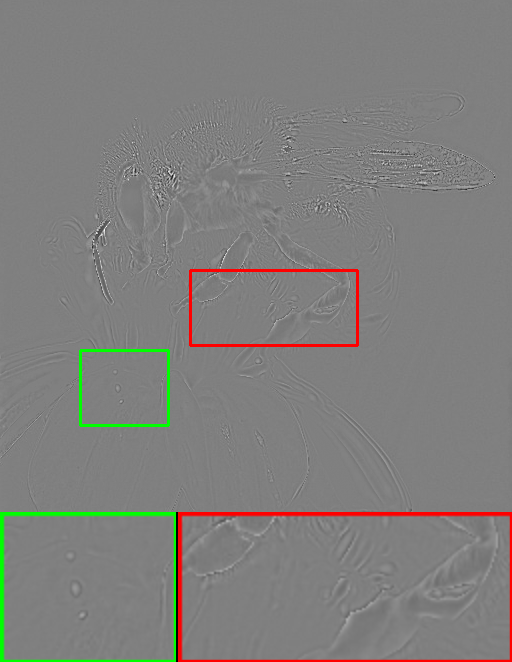}{} \\
  \end{tabular}
  \vspace{4pt}
  \begin{tabular}{@{} p{\colw} p{\colw} p{\colw} p{\colw} p{\colw} p{\colw} @{}}
  \multirow{2}{*}{\raisebox{-0.5\height}{\cellimg{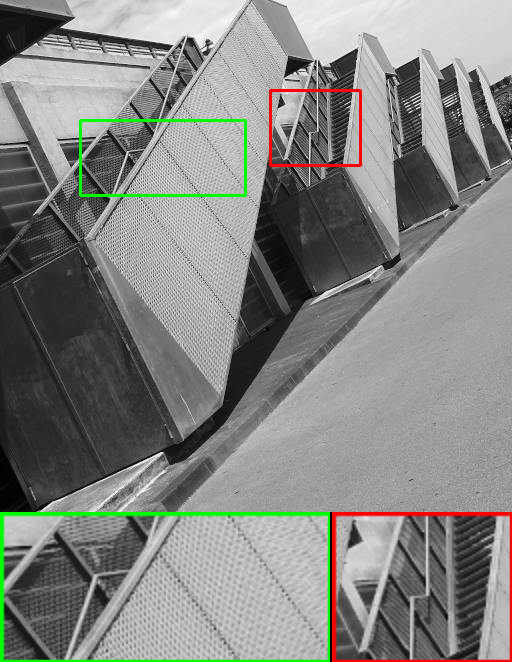}{}}} &
  \cellimg{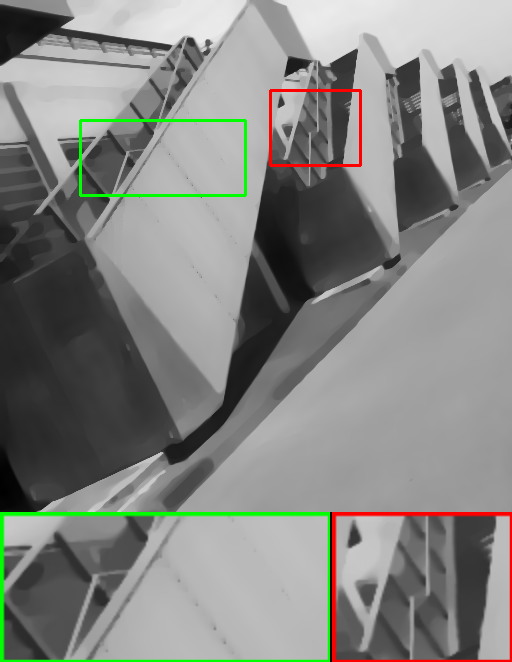}{} &
  \cellimg{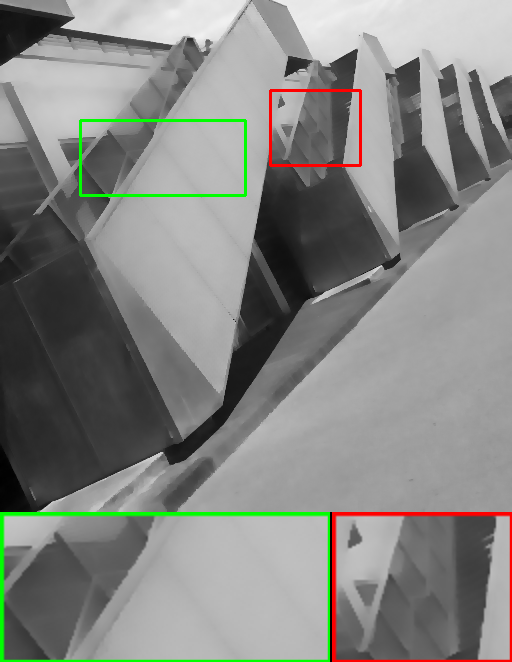}{} &
  \cellimg{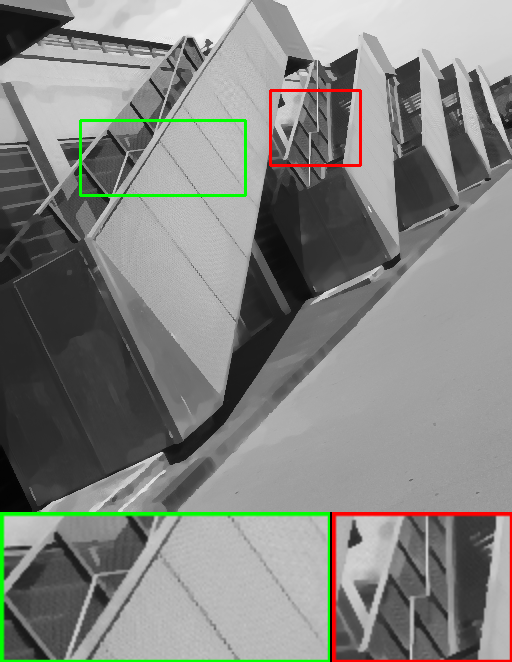}{} &
  \cellimg{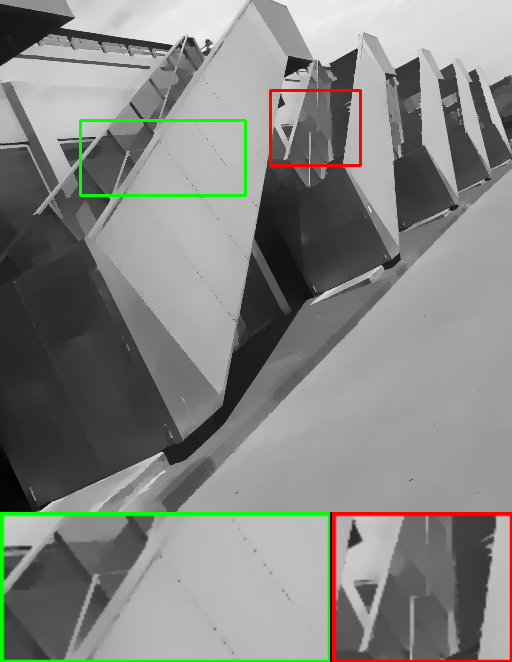}{} &
  \cellimg{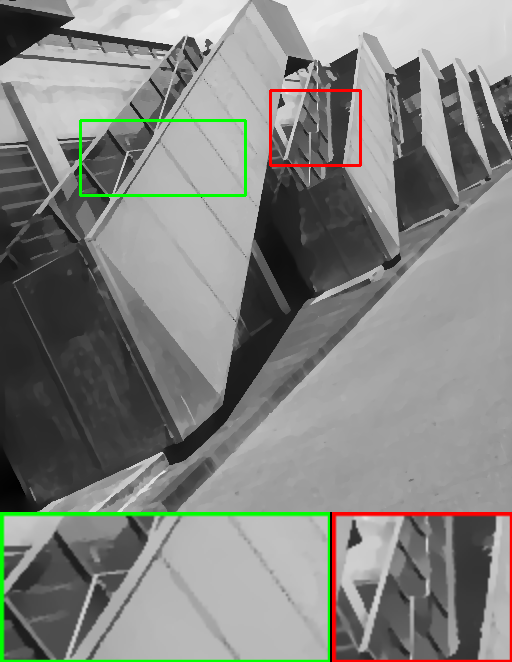}{} \\
  &
  \cellimg{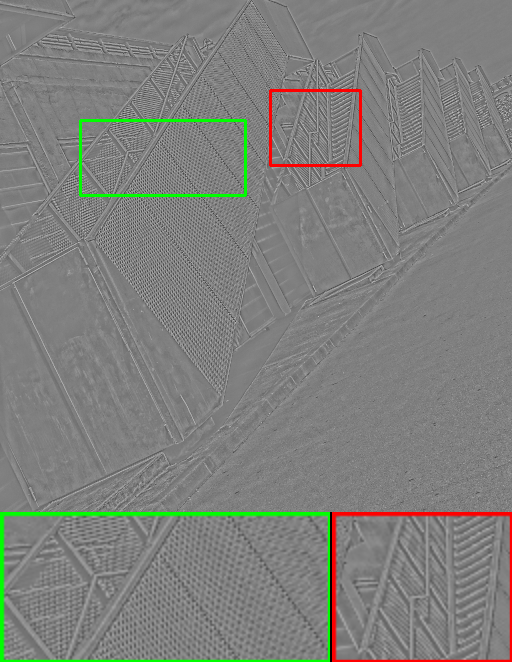}{} &
  \cellimg{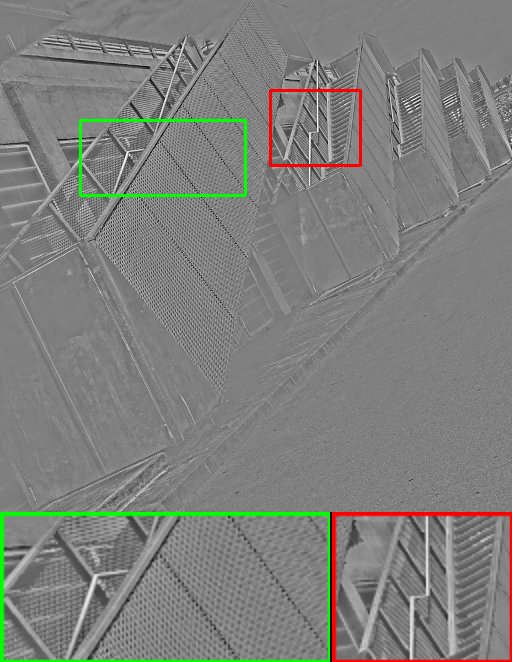}{} &
  \cellimg{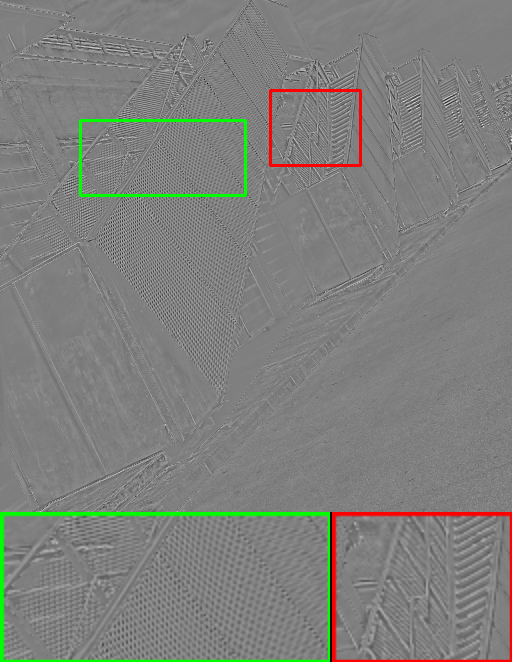}{} &
  \cellimg{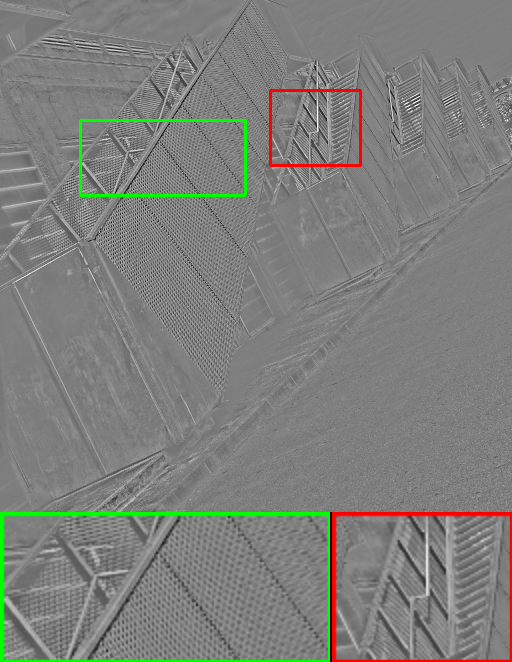}{} &
  \cellimg{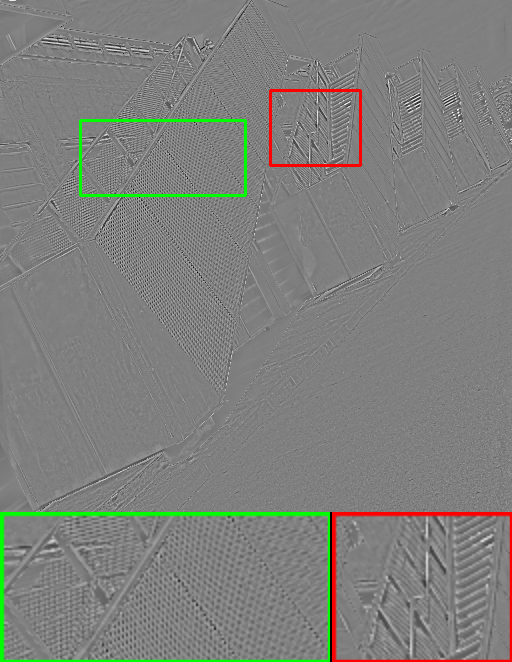}{} \\
  \end{tabular}
  \vspace{6pt}
  \begin{tabular}{@{} p{\colw} p{\colw} p{\colw} p{\colw} p{\colw} p{\colw} @{}}
  & \centering {\fontsize{7.0}{8.2}\selectfont\textsf{\mbox{TV-Gnorm \cite{wen2019primal}}}} &
  \centering {\fontsize{7.0}{8.2}\selectfont\textsf{\mbox{LR-WLS \cite{li2025cartoon}}}} &
  \centering {\fontsize{7.0}{8.2}\selectfont\textsf{\mbox{Joint-PnP \cite{doi:10.1137/24M1677770}}}} &
  \centering {\fontsize{7.0}{8.2}\selectfont\textsf{\mbox{LPR-Net \cite{10.1007/978-3-031-92366-1_30}}}} &
  \centering {\fontsize{7.0}{8.2}\selectfont\textsf{\mbox{Ours(NGVD)}}} \\
  \end{tabular}
  \caption{Real-world decomposition results on Bee and Bordeaux.
Left column: observed input (spanning two rows, with zoomed regions
integrated below). Top row per sample: cartoon component; bottom row:
texture component.}
  \label{fig:natural_results2}
\endgroup
\end{figure}

\subsection{Ablation Study on Iteration Counts and Subnetwork Learning Strategies}
\label{sec:ablation}

To further assess the impact of iterative weight updates and subnetwork learning strategies in our method, we conducted an ablation study on a simulated test dataset comprising 180 images, each of dimensions $128 \times 128$ pixels, generated following the protocol in Section~\ref{sec:exp_setup}. Evaluations focused on average PSNR, RMSE, and SSIM for the reconstructed cartoon and texture components.

\begin{table}
\centering
\small 
\setlength{\tabcolsep}{3.5pt} 
\renewcommand{\arraystretch}{1.05} 

\begin{tabular}{lccc|ccc}
\hline
\multirow{2}{*}{Configuration} & \multicolumn{3}{c|}{Cartoon} & \multicolumn{3}{c}{Texture} \\
\cline{2-7}
& PSNR $\uparrow$ & RMSE $\downarrow$ & SSIM $\uparrow$ & PSNR $\uparrow$ & RMSE $\downarrow$ & SSIM $\uparrow$ \\
\hline
\multicolumn{7}{c}{\textit{Iteration Count Ablation (Both Learned)}} \\
\hline
$k=1$  & 41.237 & 0.011 & 0.992 & 41.220 & 0.011 & \textbf{0.957} \\
$k=4$  & 41.344 & 0.011 & 0.993 & 41.342 & 0.011 & 0.951 \\
$k=8$  & \textbf{41.969} & \textbf{0.010} & \textbf{0.993} & \textbf{41.967} & \textbf{0.010} & 0.952 \\
$k=12$ & 41.581 & 0.010 & 0.993 & 41.581 & 0.010 & 0.949 \\
$k=16$ & 39.125 & 0.014 & 0.986 & 39.126 & 0.014 & 0.944 \\
\hline
\multicolumn{7}{c}{\textit{Subnetwork Learning Ablation ($k=8$)}} \\
\hline
Without $\Lambda_{\Theta_1}$ & 41.543 & 0.011 & 0.992 & 41.542 & 0.011 & \textbf{0.956} \\
Without $\mathcal{W}_{\Theta_2}$ & 33.170 & 0.025 & 0.931 & 33.152 & 0.025 & 0.888 \\
Absorbed $\lambda_1,\lambda_2$ into $\mathcal{W}_{\Theta_2}$ & 
35.057 & 0.021 & 0.954 & 34.952 & 0.021 & 0.909 \\
With $\Lambda_{\Theta_1}$ and $\mathcal{W}_{\Theta_2}$ & \textbf{41.969} & \textbf{0.010} & \textbf{0.993} & \textbf{41.967} & \textbf{0.010} & 0.952 \\
\hline
\end{tabular}

\caption{Average PSNR (dB), RMSE, and SSIM on the 180-image test dataset for varying iteration counts (with both subnetworks learned) and subnetwork learning strategies (at 8 iterations). Bold values denote the best results.}
\label{tab:ablation_results}
\end{table}

We first examined varying outer iteration counts ($k = 1, 4, 8, 12, 16$), with both subnetwo-rks---$\Lambda_{\Theta_1}$ for predicting global regularization parameters $\lambda_1$ and $\lambda_2$, and $\mathcal{W}_{\Theta_2}$ for spatially adaptive weights---fully learned. This identifies the iteration count optimizing the trade-off between computational efficiency and decomposition quality.

Based on results in Table~\ref{tab:ablation_results}, we selected $k=8$ as optimal and performed additional ablations by disabling learning in one subnetwork while maintaining it in the other. Configurations included:
\begin{enumerate}
    \item Without learning $\Lambda_{\Theta_1}$ (fixed $\lambda_1 = 1$, $\lambda_2 = 0.2$) and with learning $\mathcal{W}_{\Theta_2}$.
    \item With learning $\Lambda_{\Theta_1}$ and without learning $\mathcal{W}_{\Theta_2}$ (using identity matrices for $W_1$ and $W_2$).
    \item Absorbing $\lambda_1, \lambda_2$ into $\mathcal{W}_{\Theta_2}$
    : $\mathcal{W}_{\Theta_2}$ is retrained from
    scratch with no separate global scalar, forcing it to simultaneously encode
    global fidelity/regularization trade-offs and local spatial structure.
\end{enumerate}
These were compared against the full model (both subnetworks learned) to quantify individual contributions.

Table~\ref{tab:ablation_results} summarizes the results. Metrics improve progressively up to $k=8$,
after which performance declines at $k=12$ and drops more sharply at $k=16$.
This behavior is characteristic of truncated unrolling: training with more
unrolled steps increases the depth of the computational graph, making
optimization harder and more prone to overfitting the training trajectories.
We therefore adopt $k=8$ as the operating point that best balances
decomposition quality and training stability.

In the subnetwork ablations at $k=8$, disabling $\Lambda_{\Theta_1}$ results in slight degradation, with PSNR falling 0.426~dB (cartoon) and 0.425~dB (texture), underscoring the role of adaptive global regularization in maintaining equilibrium between data fidelity and smoothness. Conversely, omitting $\mathcal{W}_{\Theta_2}$ causes marked deterioration, with PSNR dropping 8.799~dB (cartoon) and 8.815~dB (texture), revealing that data-driven spatial weights are essential for discerning heterogeneous local structures, preventing texture bleed and edge artifacts. These observations affirm the synergistic interplay of both subnetworks in the iterative scheme, fostering enhanced adaptability and precision in decomposition tasks.

Although absorbing $\lambda_1, \lambda_2$ into $W_1, W_2$ is algebraically
possible, the separation is both theoretically necessary and empirically
critical. The convergence analysis in Section~\ref{sec:theory} requires
$\omega_{\min}I \preceq W_i \preceq \omega_{\max}I$ with $\omega_{\max} < 1$,
which merging $\lambda_i$ into $W_i$ would violate. The scalars $\lambda_1,
\lambda_2$ govern the global fidelity/regularization balance while $W_1, W_2$
encode local pixel-wise structure---conflating the two roles forces
$\mathcal{W}_{\Theta_2}$ to span vastly different magnitudes within a bounded
output range, causing gradient imbalance and training instability.
Empirically, the ``Absorbed $\lambda_1,\lambda_2$ into $\mathcal{W}_{\Theta_2}$''
row in Table~\ref{tab:ablation_results} directly tests this scenario:
with $\Lambda_{\Theta_1}$ removed entirely and $\mathcal{W}_{\Theta_2}$
retrained from scratch, PSNR drops ${\approx}6.9$\,dB (cartoon) and ${\approx}7.0$\,dB (texture)---far exceeding the mild 0.43\,dB drop when
$\lambda_1, \lambda_2$ are merely fixed as separate scalars. This confirms
that the separation of $\Lambda_{\Theta_1}$ and $\mathcal{W}_{\Theta_2}$ is a
structural necessity.

\subsection{Empirical validation of the theoretical results in Sec. \ref{sec:tr}}
\label{sec:conv_verify}

We first empirically validate the linear convergence established in 
Theorem~\ref{thm:exist_converge_main}. Algorithm~\ref{alg:test} is run for 
$K_{\max}=50$ outer iterations on the three images 
from Figure~\ref{fig:decomp-grid}, with the final iterate 
serving as a surrogate fixed point $x^\star\approx \widehat{x}_{K_{\max}}$. 
Figure~\ref{fig:convergence}(a) shows the empirical contraction 
factor
\begin{equation}\label{eq:Qk}
  Q_k \;:=\; 
  \frac{\|\widehat{x}_{k+1}-x^\star\|_2}{\|\widehat{x}_k-x^\star\|_2}\,,
\end{equation}
which stabilises in the range $[0.85,\,0.95]$, confirming that the 
iteration is contractive with a rate consistent with the bound in 
Theorem~\ref{thm:exist_converge_main}. The corresponding logarithmic fixed-point error 
$\log_{10}\|\widehat{x}_k-x^\star\|_2$ is plotted in 
Figure~\ref{fig:convergence}(b); its monotone decay at a rate 
governed by~$Q_k$ corroborates Q-linear convergence.

Figure~7(c) complements the fixed-point graphs by monitoring the PSNR values of the decompositions obtained along the 50 iterations. In particular, we plot, as a function of $k$, the PSNR values of the joint decomposition $\widehat{g}_k:=\mathcal{D}\widehat{x}_k = (\widehat{c}_k,\widehat{t}_k)$. 
Note that graphs in Figure \ref{fig:convergence}(a)-(b) 
describe convergence of $\widehat{x}_k$ to $x^\star$, whereas those in Figure \ref{fig:convergence}(c) measure $\widehat{g}_k$ to the target 
ground truth decomposition $g=(c,t)$. It clearly emerges from Figure \ref{fig:convergence}(c) that the PSNR values stabilise rapidly after $k=8$ iterations,
with a maximum drop of $0.56$\,dB at $k=50$, confirming that $x^\star$ is very close to the ground truth in practice and that $k=8$ is a safe and principled operating point even when the iteration is continued beyond the training horizon.

\begin{figure}[tbp]
  \centering
  \includegraphics[width=\textwidth]{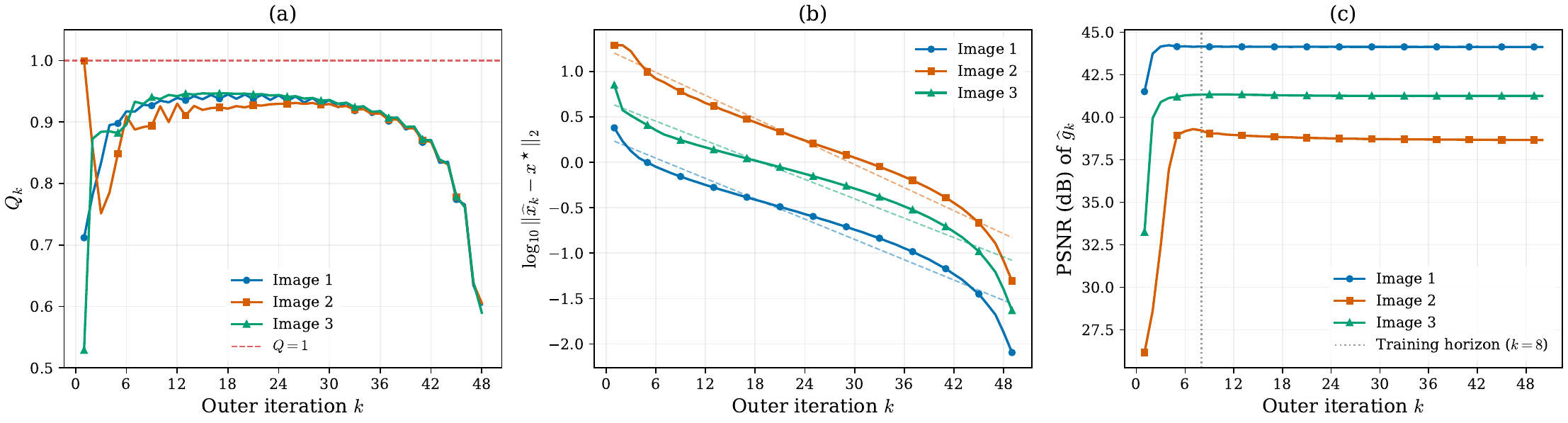}
  \caption{Convergence behavior of Algorithm~\ref{alg:test} applied to images from Figure~\ref{fig:decomp-grid}. \textbf{(a)}~Contraction factor $Q_k$ vs. $k$: values stabilise below $1$. \textbf{(b)}~Fixed-point logarithmic error 
  $\log_{10}\|\widehat{x}_k-x^\star\|_2$ vs.\ $k$ (dashed: linear fits).
  (c)~PSNR of $\widehat{g}_k = (\widehat{c}_k,\widehat{t}_k)$ vs.\ $k$: training horizon $k=8$
(dotted vertical line).
  }
  \label{fig:convergence}
\end{figure}

To empirically validate Proposition~\ref{prop:stab}, we use again the three synthetic sample test images from Figure~\ref{fig:decomp-grid}, all of size $128 \times 128$. 
The weight matrices $(W_1,W_2)$ are fixed from the clean-image
decomposition with $\omega_{\min}=0.01$, and the linear system~ \eqref{eq:normalA_main} is solved for noisy inputs
$\hat{f} = f + \epsilon$ with noise $\epsilon \sim \mathcal{N}(0,\sigma^2 I)$ of standard deviation in the range $\sigma \in [0.002,\,0.02]$.
Numerical computation leads to $\|S\|\approx 2.99$ and
$\sigma_{\min}(M)\approx 0.045$, hence $\mathrm{cond}(M)\approx 67$
and the Lipschitz constant $\|S\|/\alpha\approx 1497$.
Figure~\ref{fig:stability}(a) confirms that the empirical ratio
$\|\Delta x\|_2/\|\Delta f\|_2$ remains strictly below the theoretical
bound for all sample images and noise levels, thus validating Proposition~\ref{prop:stab}.
The empirical ratio stabilises near one for images~1 and~3, and increases toward $3$ for image~2, indicating that the theoretical upper bound is valid but
conservative: $\alpha = \sigma_{\min}^2(M)$ is governed by the
minimum admissible weight $\omega_{\min}$, while in practice the
learned weights concentrate well above this value.
Figure~\ref{fig:stability}(b) shows that the cartoon PSNR drops by at most $0.3$\,dB
at $\sigma=0.02$, while the texture PSNR decreases more substantially,
consistent with the oscillatory and high-frequency nature of the texture
component.

\begin{figure}[tbp]
  \centering
  \includegraphics[width=\textwidth]{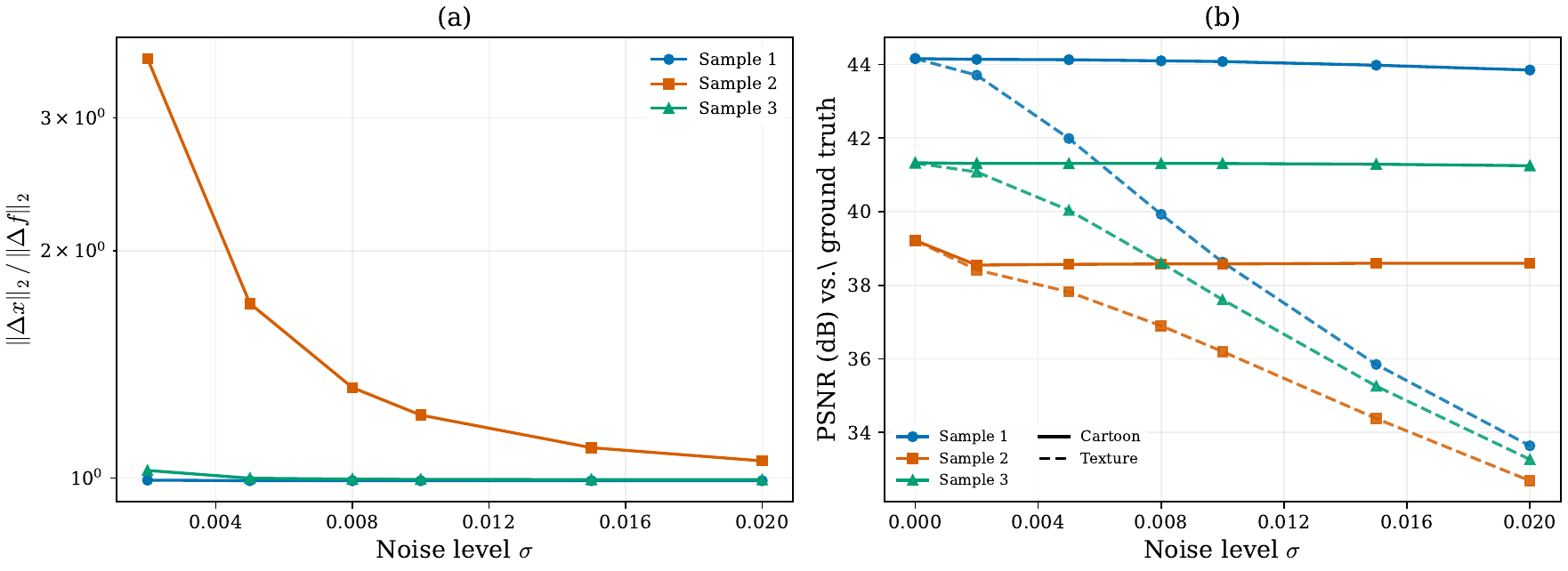}
  \caption{Numerical validation of Proposition~\ref{prop:stab}.
(a)~Empirical ratio $\|\Delta x\|_2/\|\Delta f\|_2$ (log scale) vs.\
noise level $\sigma$; note that the theoretical upper bound  is $\|S\|/\alpha\approx 1497$.
(b)~Decomposition PSNR vs.\ $\sigma$: cartoon (solid) and texture
(dashed).}
  \label{fig:stability}
\end{figure}

\section{Conclusion}
\label{sec:con}
In this paper, we introduced the Neural Guided Variational Decomposition (NGVD) framework, a novel approach to cartoon–texture separation that bridges the gap between classical variational models and deep learning. By employing spatially adaptive, pixel-wise weights within a quadratic formulation, we demonstrated that it is possible to maintain the computational efficiency of linear systems while capturing the complex structural heterogeneity of natural images. Our work provided two distinct pathways for weight estimation: a supervised, data-driven variant utilizing an MLP and a lightweight U-Net, and a robust model-based probabilistic estimator for training-free applications.

Theoretically, we established the mathematical rigor of the NGVD approach by framing the iterative refinement scheme as a fixed-point map. We provided formal proofs for the uniqueness and conditioning of the inner solves, the existence of outer fixed points, and, crucially, a verifiable contractivity condition that ensures convergence. 
Furthermore, our Lipschitz stability analysis confirms the framework's practical resilience against measurement perturbations and noise. Extensive numerical experiments validate these theoretical findings, showing that NGVD consistently outperforms classical and recent state-of-the-art methods in terms of decomposition quality and edge preservation.

This framework opens several promising avenues for future research. While we focused on the cartoon-texture problem, future work could explore  the extension of the neural-guided weights to handle multi-component decomposition of images and signals into three or more constituents.

\appendix
\section{Proofs and technical estimates}
\label{app:proofs}

This appendix contains proofs of Proposition \ref{prop:MAP}, Lemma \ref{Alowerbound} and Theorems~\ref{thm:lipschitz_main}--\ref{thm:exist_converge_main}, together with an auxiliary Lemma \ref{lem:perturbapx}.

\subsection{Proof of Proposition \ref{prop:MAP}}
\begin{proof}
We employ a MAP estimation approach to estimate the components $c$ and $\xi$:
\begin{align}
\{\widehat{c},\widehat{\xi}\} 
&= \arg \max_{c,\xi} p\left(c,\xi\mid f\right)
= \argmin_{c,\xi} -\ln \left( p\left(c,\xi\mid f\right) \right) 
= \argmin_{c,\xi} -\ln 
\frac{p\left(f \mid c,\xi\right) \, p\left(c,\xi\right)}{p\left(f\right)}
\nonumber\\
&= \argmin_{c,\xi} \left[ 
-\ln p\left(f \mid c,\xi\right) 
-\ln p\left(c,\xi\right) 
+\cancel{\ln p\left(f\right)} 
\right]
\label{eq:MAPPP}
\end{align}
where we used Bayes' theorem and omitted the log-evidence term $\ln p(f)$ as it is constant with respect to the optimization variables $c$ and $\xi$. 

The explicit expression for the negative log-likelihood $-\ln p(f \mid c,\xi)$ is derived by considering the generative model $f = c + t + r = c + \mathrm{div}(\xi) + r$. Noting that $c + \mathrm{div}(\xi)$ is deterministic when conditioning on $c$ and $\xi$, the likelihood satisfies:
\begin{equation}
p(f \mid c,\xi) = p(c + \mathrm{div}(\xi) + r \mid c,\xi) = p_r(f - c - \mathrm{div}(\xi) \mid c,\xi) = p_r(f - c - \mathrm{div}(\xi)) ,
\label{eq:negLL}
\end{equation}
where the second equality reflects the translation property of the probability density, and the final equality follows from the mutual independence of the components $r$ and $(c,\xi)$ as stated in assumption \ref{eq:H1}.

After noting that the negative log-distribution of a $n$-variate Gaussian random vector $z$ with zero-mean and covariance matrix $\Sigma_z$ reads
\begin{equation}
-\ln p(z) 
\,\;{=}\:
-\ln \left[ \frac{1}{\sqrt{(2 \pi)^n | \Sigma_z |}} \, \mathrm{exp} \left(
-\frac{1}{2} \, z^{\top} \Sigma_z^{-1} z \right) \right]
\,\;{=}\;\:
\frac{1}{2} \left\|  z \right\|_{\Sigma_z^{-1}}^2 + \, \text{const} 
\, ,
\label{eq:nllg}
\end{equation}
with const not depending on $z$, and recalling assumption \ref{eq:H2} on the distribution of $r$, the negative log-likelihood in \eqref{eq:negLL} takes the form
\begin{equation}
- \ln p\left(f \mid c,\xi \right) = 
\frac{1}{2 \sigma_r^2} \left\| f - c - \mathrm{div}(\xi) \right\|_2^2 
\, + \; \text{const} \, .
\label{eq:LIK}
\end{equation}

Then, based again on assumption \ref{eq:H1} that the two sought components $c$ and $\xi$ are mutually independent, and also recalling the last two assumptions \ref{eq:H3}, \ref{eq:H4_1} on the distributions of $c$ and $\xi$, we find that the negative log-prior in \eqref{eq:MAPPP} reads
\begin{equation}
-\ln p(c,\xi) 
\,\;{=}\;
-\ln p(c) - \ln p(\xi)
\,\;{=}\;\, 
\frac{1}{2} \left\|  \nabla c \right\|_{\Sigma_{ c}^{-1}}^2
+
\frac{1}{2} \left\|  \xi \right\|_{\Sigma_{\xi}^{-1}}^2
\, + \; \text{const} \, .
\label{eq:PRIOR}
\end{equation}
Plugging \eqref{eq:LIK}--\eqref{eq:PRIOR} into the MAP formula \eqref{eq:MAPPP} and neglecting the constants, we get
\begin{equation}
\{\widehat{c},\widehat{\xi}\} 
= \argmin_{c,\xi} \left\{
\frac{1}{2 \sigma_r^2} \left\| f - c - \mathrm{div}(\xi) \right\|_2^2
+\frac{1}{2} \left\|  \nabla c \right\|_{\Sigma_{ c}^{-1}}^2
+\frac{1}{2} \left\|  \xi \right\|_{\Sigma_{\xi}^{-1}}^2
\right\}
\label{eq:MAP2}
\end{equation}
Noting that the two minimum variances $\underline{\sigma}_{\, c}^2$, $\underline{\sigma}_{\, \xi}^2$ defined in \eqref{eq:MAPpars} 
are positive by assumption, and that the two ``normalized'' covariance 
matrices $\underline{\Sigma}_{\, c}$, $\underline{\Sigma}_{\,\xi}$ defined in \eqref{eq:MAPsigm}
\begin{equation}
\underline{\Sigma}_{\, c} := \frac{1}{\underline{\sigma}_{\, c}^2} \, \Sigma_{ c} 
\,\, , 
\quad\;\;\; 
\underline{\Sigma}_{\,\xi} = \frac{1}{\underline{\sigma}_{\,\xi}^2} \Sigma_{\xi} 
\,\, ,
\end{equation}
whose diagonal elements are clearly all greater than or equal to 1, 
\eqref{eq:MAP2} can be equivalently written as
\begin{equation}
\{\widehat{c},\widehat{\xi}\} 
= \argmin_{c,\xi} \left\{
\frac{1}{2 \sigma_r^2} \left\| f - c - \mathrm{div}(\xi) \right\|_2^2
+\frac{1}{2 \underline{\sigma}_{\, c}^2} 
  \left\|  \nabla c \right\|_{\underline{\Sigma}_{\, c}^{-1}}^2
+\frac{1}{2 \underline{\sigma}_{\,\xi}^2} 
  \left\|  \xi \right\|_{\underline{\Sigma}_{\,\xi}^{-1}}^2
\right\} \, .
\label{eq:MAP3}
\end{equation}
Finally, multiplying the cost function in \eqref{eq:MAP3} by the positive scalar $\sigma_r^2$ and introducing the variables $\lambda_1$, $\lambda_2$ defined in \eqref{eq:MAPpars}, we obtain the variational model in \eqref{eq:our}.
\end{proof}

\subsection{Proof of Lemma \ref{Alowerbound}}

\begin{proof}
Using $W_i\succeq\omega_{\min}I$, we have
\begin{align*}
x^\top A(W_1,W_2) x
&= \|Sx\|_2^2 + \lambda_1 (Gx)^\top W_1 (Gx) + \lambda_2 (Rx)^\top W_2 (Rx)\\
&\ge \|Sx\|_2^2 + \lambda_1\omega_{\min}\|Gx\|_2^2 + \lambda_2\omega_{\min}\|Rx\|_2^2\\
&= \|\mathcal M x\|_2^2 \\
&\ge \sigma_{\min}^2(\mathcal M)\|x\|_2^2.
\end{align*}
From Prop.\ref{prop1} $A(W_1,W_2)$ is symmetric, positive definite, thus invertible. Then, since  the minimum eigenvalue satisfies \(\lambda_{\min}(A(W_1,W_2)) \ge \sigma_{\min}^2(\mathcal M)=\alpha>0\),
then the inverse-norm bound follows 
\[
\|A(W_1,W_2)^{-1}\| = \frac{1}{\lambda_{\min}(A(W_1,W_2))} \le \frac{1}{\alpha}.
\]
\end{proof}

\subsection{Proof of Theorem~\ref{thm:lipschitz_main}}
Before giving the proof of the theorem, we first propose the following auxiliary lemma. We quantify how $\mathcal{T}_{\theta}(x)$ depends on changes in $\mathcal{W}_{\Theta}(x)$.
\begin{lemma}\label{lem:perturbapx}
Let $\mathcal{W}_{\Theta}(x_1)=(W_1,W_2)$ and ${\mathcal{W}}_{\Theta}(x_2)=(\widetilde W_1,\widetilde W_2)$ be two admissible weight pairs. Then
\begin{equation}\label{eq:perturb_bound}
\|\mathcal{T}_{\theta}(x_1)-\mathcal{T}_{\theta}(x_2)\|_2 \le \frac{\lambda_1\|G\|^2\|\widetilde W_1-W_1\| + \lambda_2\|R\|^2\|\widetilde W_2-W_2\|}{\alpha}\,\|\mathcal{T}_{\theta}(x_2)\|_2.
\end{equation}
\end{lemma}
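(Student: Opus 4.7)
\medskip

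\noindent\textbf{Proof plan for Lemma \ref{lem:perturbapx}.}
The strategy is a classical perturbation argument for the linear system defining $\mathcal{T}_{\theta}$. By Proposition \ref{prop1}, the two outputs $y_1 := \mathcal{T}_{\theta}(x_1)$ and $y_2 := \mathcal{T}_{\theta}(x_2)$ are the unique solutions of
\begin{equation*}
A(W_1,W_2)\, y_1 \;=\; b, \qquad A(\widetilde W_1,\widetilde W_2)\, y_2 \;=\; b,
\end{equation*}
with $b = S^\top f$ in both cases, since only the weights differ while $f$ is fixed. Subtracting the two identities and adding and subtracting $A(W_1,W_2)\, y_2$ gives
\begin{equation*}
A(W_1,W_2)\,(y_1 - y_2) \;=\; \bigl[A(\widetilde W_1,\widetilde W_2) - A(W_1,W_2)\bigr]\, y_2,
\end{equation*}
so that, using the invertibility of $A(W_1,W_2)$ guaranteed by Proposition \ref{prop1},
\begin{equation*}
y_1 - y_2 \;=\; A(W_1,W_2)^{-1} \bigl[A(\widetilde W_1,\widetilde W_2) - A(W_1,W_2)\bigr]\, y_2.
\end{equation*}

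\medskip

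\noindent The next step is to exploit the explicit block structure of $A$. From the definition $A(W_1,W_2) = S^\top S + \lambda_1 G^\top W_1 G + \lambda_2 R^\top W_2 R$, the $S^\top S$ term cancels, and one obtains the exact identity
\begin{equation*}
A(\widetilde W_1,\widetilde W_2) - A(W_1,W_2) \;=\; \lambda_1\, G^\top (\widetilde W_1 - W_1)\, G \;+\; \lambda_2\, R^\top (\widetilde W_2 - W_2)\, R.
\end{equation*}
Taking spectral norms, the triangle inequality together with the submultiplicativity $\|G^\top M G\| \le \|G\|^2 \|M\|$ (and similarly for $R$) yields
\begin{equation*}
\|A(\widetilde W_1,\widetilde W_2) - A(W_1,W_2)\| \;\le\; \lambda_1 \|G\|^2 \|\widetilde W_1 - W_1\| \;+\; \lambda_2 \|R\|^2 \|\widetilde W_2 - W_2\|.
\end{equation*}

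\medskip

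\noindent Finally, I would combine the two ingredients. Taking Euclidean norms in the identity for $y_1 - y_2$, using submultiplicativity of the operator norm, and invoking the bound $\|A(W_1,W_2)^{-1}\| \le 1/\alpha$ from Lemma \ref{Alowerbound} (applicable because $(W_1,W_2)$ is admissible), one obtains
\begin{equation*}
\|y_1 - y_2\|_2 \;\le\; \frac{1}{\alpha}\,\bigl(\lambda_1 \|G\|^2 \|\widetilde W_1 - W_1\| + \lambda_2 \|R\|^2 \|\widetilde W_2 - W_2\|\bigr)\,\|y_2\|_2,
\end{equation*}
which is precisely \eqref{eq:perturb_bound}. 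There is no genuine obstacle here: the argument is a textbook first-order perturbation for linear systems, and the only subtle point is the bookkeeping choice to add and subtract $A(W_1,W_2)\,y_2$ rather than $A(\widetilde W_1,\widetilde W_2)\,y_1$, so that the resulting inverse corresponds to the weights $(W_1,W_2)$ admitting the bound $1/\alpha$ and the multiplicative factor is $\|y_2\|_2 = \|\mathcal{T}_{\theta}(x_2)\|_2$ as stated.
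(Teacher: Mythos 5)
Your proposal is correct and follows essentially the same route as the paper's own proof: both subtract the two normal equations $A(W_1,W_2)y_1=b=A(\widetilde W_1,\widetilde W_2)y_2$, isolate $y_1-y_2$ via the inverse of $A(W_1,W_2)$, and bound the weight-perturbation term using $\|A(W_1,W_2)^{-1}\|\le 1/\alpha$ together with $\|G^\top(\widetilde W_1-W_1)G\|\le\|G\|^2\|\widetilde W_1-W_1\|$ and the analogous bound for $R$. Your remark about which operator to invert so that the factor $\|\mathcal{T}_{\theta}(x_2)\|_2$ appears is exactly the bookkeeping the paper performs.
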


\begin{proof}
From normal equations, we have
\[
A(\mathcal{W}_{\Theta}(x_1))\mathcal{T}_{\theta}(x_1)=b=A(\mathcal{W}_{\Theta}(x_2))\mathcal{T}_{\theta}(x_2).
\]
Subtract to obtain
\[
A(\mathcal{W}_{\Theta}(x_1))\big(\mathcal{T}_{\theta}(x_1)-\mathcal{T}_{\theta}(x_2)\big) = \big(A(\mathcal{W}_{\Theta}(x_2))-A(\mathcal{W}_{\Theta}(x_1))\big) \mathcal{T}_{\theta}(x_2).
\]
Hence
\[
\mathcal{T}_{\theta}(x_1)-\mathcal{T}_{\theta}(x_2) = (A(\mathcal{W}_{\Theta}(x_1)))^{-1}\big(\lambda_1 G^\top(\widetilde W_1-W_1)G + \lambda_2 R^\top(\widetilde W_2-W_2)R\big)\mathcal{T}_{\theta}(x_2).
\]
Taking norms and using $\|A(\mathcal{W}_{\Theta}(x_1))^{-1}\|\le 1/\alpha$ and $\|G^\top(\widetilde W_1-W_1)G\|\le \|G\|^2\|\widetilde W_1-W_1\|$ yields \eqref{eq:perturb_bound}.
\end{proof}
Now, we give the proof of Theorem~\ref{thm:lipschitz_main}.
\begin{proof}
Let $x,y\in\mathcal B$ and denote $(W_1,W_2)=\mathcal W_{\Theta}(x)$, $(\widetilde W_1,\widetilde W_2)=\mathcal W_{\Theta}(y)$. For \(i=1,2,\) we have
\[
\|\widetilde W_i-W_i\| \le \|\mathcal W_{\Theta}(x)-\mathcal W_{\Theta}(y)\| \le L_{\mathcal W}\|x-y\|_2.
\]
Based on \eqref{eq:AinvL}, we obtain the bound $\|\mathcal{T}_{\theta}(y)\|_2 \le \|A(\mathcal{W}_{\Theta}(y))^{-1}\|\|b\|_2 \le \|b\|_2/\alpha$. Using Lemma \ref{lem:perturbapx} and $\|b\|_2=\|S^\top f\|_2\le\|S\|\,\|f\|_2$, we have
\[
\|\mathcal{T}_{\theta}(x)-\mathcal{T}_{\theta}(y)\|_2 \le \frac{(\lambda_1\|G\|^2+\lambda_2\|R\|^2)\,L_{\mathcal W}\,\|S\|\,\|f\|_2}{\alpha^2}\,\|x-y\|_2.
\]
This proves the Lipschitz bound.
\end{proof}

\subsection{Proof of Theorem~\ref{thm:exist_converge_main}}
\begin{proof}
\textbf{Invariant ball and existence.} For any admissible $\mathcal{W}_{\Theta}(x)$ we have
\[
\|\mathcal{T}_{\theta}(x)\|_2 = \|A(\mathcal{W}_{\Theta}(x))^{-1}b\| \le \|A(\mathcal{W}_{\Theta}(x))^{-1}\|\,\|b\| \le \frac{\|S\|\,\|f\|_2}{\alpha} =: r \, ,
\]
so $\mathcal T(\mathcal B)\subset\mathcal B$. Since $\mathcal T$ is continuous on $\mathcal B$ (Theorem~\ref{thm:lipschitz_main}), Brouwer's fixed-point theorem implies existence of at least one fixed point in $\mathcal B$.

\textbf{Contractivity and uniqueness.} Since the explicit upper bound $\mathcal{Q}$ in Theorem~\ref{thm:lipschitz_main} satisfies $\mathcal{Q}<1$ by choosing proper $c_i$, then $\mathcal T$ is a contraction on $\mathcal B$ and Banach's fixed-point theorem yields a unique fixed point $x_\star$ in $\mathcal B$ and linear convergence $\|x_k-x_\star\|\le \mathcal{Q}^k\|x_0-x_\star\|$.
This completes the proof.
\end{proof}

\bibliographystyle{siamplain}
\bibliography{references}
\end{document}